\theoremstyle{plain}
\newtheorem{theorem}{Theorem}[section]
\newtheorem{prop}[theorem]{Proposition}
\newtheorem{lemma}[theorem]{Lemma}
\newtheorem{corollary}[theorem]{Corollary}
\newtheorem{thm}{Theorem}
\theoremstyle{definition}
\newtheorem{remark}[theorem]{Remark}
\newtheorem{example}[theorem]{Example}
\newtheorem{question}[theorem]{Question}
\newtheorem{rem}[thm]{Remark}
\newtheorem{que}{Question}
\newtheorem*{thm-simple}{Theorem~\ref{t:simple}}
\newcommand{\F}{\mathcal{F}}
\newcommand{\W}{\mathcal{W}}
\newcommand{\X}{\mathcal{X}}
\newcommand{\PP}{\mathbf{P}}
\newcommand{\Z}{\mathbb{Z}}
\newcommand{\lcm}{\mathop{\mathrm{lcm}}}
\newcommand{\PSL}{\mathop{\mathrm{PSL}}}
\newcommand{\PSU}{\mathop{\mathrm{PSU}}}
\newcommand{\PSp}{\mathop{\mathrm{PSp}}}
\newcommand{\SL}{\mathop{\mathrm{SL}}}
\newcommand{\GL}{\mathop{\mathrm{GL}}}
\newcommand{\Aut}{\mathop{\mathrm{Aut}}}
\newcommand{\Heis}{\mathop{\mathrm{Heis}}}
\newcommand{\leqn}{\trianglelefteq}
\begin{document}
\title{Minimal cover groups}

\author{Peter J. Cameron,\footnote{School of Mathematics and Statistics,
University of St Andrews, St Andrews, Fife KY16 9SS, UK}\ \
David Craven,\footnote{School of Mathematics, University of Birmingham,
Edgbaston, Birmingham B15 2TT, UK}\\ 
Hamid Reza Dorbidi,\footnote{Department of Mathematics, Faculty of Science,
University of Jiroft, Jiroft 78671-61167, Iran}\ \ 
Scott Harper,\footnote{School of Mathematics and Statistics,
University of St Andrews, St Andrews, Fife KY16 9SS, UK}\\
and Benjamin Sambale\footnote{Institut f\"ur Algebra, Zahlentheorie und Diskrete
Mathematik, Leibniz Universit\"at Hannover, 30167 Hannover, Germany}}

\date{}

\maketitle

\begin{abstract}
Let $\F$ be a set of finite groups. A finite group $G$ is called an \emph{$\F$-cover} if every group in $\F$ is isomorphic to a subgroup of $G$. An $\F$-cover is called \emph{minimal} if no proper subgroup of $G$ is an $\F$-cover, and \emph{minimum} if its order is smallest among all $\F$-covers. We prove several results about minimal and minimum $\F$-covers: for example, every minimal cover of a set of $p$-groups (for $p$ prime) is a $p$-group (and there may be finitely or infinitely many, for a given set); every minimal cover of a set of perfect groups is perfect; and a minimum cover of a set of two nonabelian simple groups is either their direct product or simple. Our major theorem determines whether $\{\Z_q,\Z_r\}$ has finitely many minimal covers, where $q$ and $r$ are distinct primes. Motivated by this, we say that $n$ is a \emph{Cauchy number} if there are only finitely many groups which are minimal (under inclusion) with respect to having order divisible by $n$, and we determine all such numbers. This extends Cauchy's theorem. We also define a dual concept where subgroups are replaced by quotients, and we pose a number of problems.
\end{abstract}

\clearpage
\section{Introduction}

Cayley's celebrated theorem asserts that every group of order $n$ is isomorphic to a subgroup of the symmetric group $S_n$. This motivates the following problem: given a finite set $\F$ of finite groups, find a group $G$ such that every group in $\F$ is isomorphic to a subgroup of $G$. We call a group $G$ with this property an \emph{$\F$-cover}. When $\F$ consists of all the groups of order $n$ up to isomorphism, which is the case occurring in Cayley's theorem, we refer to an $\F$-cover as an $n$-cover. 

It is natural to ask what the smallest $\F$-cover is, and to make this precise we introduce the following definitions. For a finite set $\F$ of finite groups, we say that an $\F$-cover is
\begin{itemize}
\setlength\itemsep{0pt}
\item \emph{minimal} if no proper subgroup of $G$ is an $\F$-cover;
\item \emph{co-minimal} if no proper quotient of $G$ is an $\F$-cover;
\item \emph{strongly minimal} if it is both minimal and co-minimal;
\item \emph{minimum} if no $\F$-cover has smaller order.
\end{itemize}
Note that a minimum cover is strongly minimal. 

In this paper, we will mainly focus on minimal and minimum covers, guided by the following two questions.

\begin{que} \label{q:finite}
For which finite sets $\F$ of finite groups are there only finitely many minimal $\F$-covers up to isomorphism?
\end{que}

\begin{que} \label{q:property}
Given a group theoretic property $\PP$, is it is true that if every group in $\F$ has property $\PP$, then every (or perhaps some) minimum (or minimal) $\F$-cover has property $\PP$?
\end{que}

Our first theorem, which we prove in Section~\ref{s:twocyclic}, fully answers Question~\ref{q:finite} in an important special case.

\begin{thm} \label{t:twocyclic}
Let $q < r$ be primes. The set $\{\Z_q,\Z_r\}$ has only finitely many minimal covers if and only if $q = 2$ and $r$ is a Fermat prime, in which case there are exactly three: $\Z_{2r}$, $D_{2r}$ and $\Z_2^{2a} : \Z_r$ where $r = 2^a+1$.
\end{thm}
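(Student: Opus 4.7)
The plan is to verify directly that the three listed groups are minimal $\{\Z_2,\Z_r\}$-covers, then analyse an arbitrary minimal cover $G$ to show $G$ must be one of them when $r$ is a Fermat prime, and finally exhibit an infinite family in the non-Fermat case. For $\Z_{2r}$ and $D_{2r}$ minimality is immediate from Sylow counts. For $G=\Z_2^{2a}\rtimes\Z_r$, let $\Z_r$ act on $\mathbb{F}_{2^{2a}}$ by multiplication by a primitive $r$th root of unity; since $r-1=2a$ is the order of $2$ modulo $r$ for Fermat $r$, this is an irreducible faithful $\mathbb{F}_2\Z_r$-module, so every proper subgroup of $G$ lies inside $\mathbb{F}_2^{2a}$ (missing an element of order $r$) or is a conjugate of $\Z_r$ (missing an involution).

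Now let $G$ be any minimal $\{\Z_2,\Z_r\}$-cover. Minimality forces $\langle z,y\rangle = G$ whenever $z$ is an involution of $G$ and $y\in G$ has order $r$. If some such $z$ normalises some $\langle y\rangle$, then $z$ acts on $\langle y\rangle$ as an involution of $\Aut(\Z_r)\cong \Z_{r-1}$---hence trivially or by inversion---so $\langle z,y\rangle$ is $\Z_{2r}$ or $D_{2r}$, and by minimality $G$ equals this subgroup. Otherwise no involution of $G$ normalises any Sylow $r$-subgroup $P$, whence $N_G(P)/C_G(P)$ has odd order dividing $r-1$.

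When $r=2^a+1$ is a Fermat prime, the odd part of $r-1$ is $1$, so $N_G(P)=C_G(P)$, and Burnside's normal $p$-complement theorem gives $G=N\rtimes P$ where $N$ is the normal $r$-complement. I would proceed by induction on $|G|$: for a minimal normal subgroup $M\trianglelefteq G$ contained in $N$, the quotient $G/M$ is again a minimal cover because proper cover subgroups lift through $M$, so $G/M$ is one of the three listed groups. Schur--Zassenhaus then produces a complement $C\cong G/M$ in $G$; since $C$ is itself a cover, minimality forces $C=G$ and hence $M=1$. Ruling out odd primes in $|N|$ by the same argument shows $N$ is an elementary abelian $2$-group. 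Irreducibility of the $\Z_r$-action is needed to avoid proper cover subgroups arising from $\Z_r$-invariant subspaces, so $N\cong\mathbb{F}_2^{2a}$, being the unique faithful irreducible $\mathbb{F}_2\Z_r$-module.

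The main obstacle is the non-Fermat direction, where $r-1$ has an odd prime divisor and the Burnside argument breaks. I would construct an infinite family by producing, for each prime $s$ in an infinite set defined by congruences modulo $r$ and the odd prime divisor of $r-1$, a distinct minimal cover $G_s$---for instance a semidirect product whose Sylow $r$-subgroup normaliser has prescribed odd structure and whose $2$-local subgroups grow with $s$---and verifying minimality by Sylow analysis, systematically ruling out $\Z_{2r}$, $D_{2r}$, the affine group $\mathbb{F}_2^d\rtimes\Z_r$, and other small cover subgroups. The case $q\ge 3$ is handled analogously using odd primes dividing $q-1$ or $r-1$ in place of the odd prime dividing $r-1$.
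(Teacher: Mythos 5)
Your verification that the three listed groups are minimal covers is fine, and the observation that an involution normalising a subgroup of order $r$ forces $G\in\{\Z_{2r},D_{2r}\}$ matches the paper's soluble analysis. But the Burnside transfer argument, which is the engine of your Fermat-case proof, only bites when the Sylow $r$-subgroup $P$ of $G$ is cyclic: you need $N_G(P)/C_G(P)$ to embed in $\Aut(\Z_r)\cong\Z_{r-1}$, whereas a priori $|P|$ can be any power of $r$, and for $P$ of rank at least $2$ the group $\Aut(P)$ has nontrivial odd part prime to $r$ (for $r=5$, $\GL_2(5)$ contains elements of order $3$), so ``$N_G(P)$ has odd order'' no longer yields $N_G(P)=C_G(P)$. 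The real difficulty of the theorem is ruling out insoluble --- in particular simple --- minimal covers, and your argument does not reach simple groups with noncyclic Sylow $r$-subgroups at all; the paper does this via the Liebeck--Saxl classification of primitive groups with odd-order point stabilisers, i.e.\ via CFSG. A second concrete gap is the Schur--Zassenhaus step in your induction: it requires $\gcd(|M|,|G/M|)=1$, which fails precisely when the minimal normal subgroup $M$ is a $2$-group or a product of nonabelian simple groups, and these are the cases that matter. Indeed Proposition~\ref{piFcovers} shows that non-coprime Frattini extensions are exactly the mechanism producing infinitely many minimal covers in the non-Fermat cases, so this coprimality cannot be waved through; ruling such extensions out in the Fermat case needs a different argument (e.g.\ exhibiting the proper cover $E{:}\Z_r$ inside the extension).

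The non-Fermat direction is also not established. Proposition~\ref{p:qr2}(a) shows that \emph{any} pair $\{\Z_q,\Z_r\}$ has at most three soluble minimal covers, so any infinite family is necessarily insoluble; your proposed ``semidirect product whose Sylow $r$-normaliser has prescribed odd structure'' cannot be a soluble construction, and verifying minimality of an insoluble group by ``Sylow analysis'' amounts to knowing its full subgroup lattice. The paper's families are $\PSL_2(p)$ for suitable primes $p$ (for odd $q<r$ with $r\ge 7$, minimality checked against Dickson's list of subgroups of $\PSL_2(p)$) and $\PSL_d(p)$ with $d$ an odd prime divisor of $r-1$ (for $q=2$ and $r$ not Fermat, minimality checked against the Guralnick--Penttila--Praeger--Saxl classification of subgroups containing ppd-elements). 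You also leave the pair $\{3,5\}$ untouched, and it is genuinely exceptional: there \emph{is} a simple minimal cover ($A_5$, unique by a full CFSG check), and the infinitude of minimal covers comes from Frattini extensions of $A_5$ rather than from an infinite family of simple groups. So while the Burnside idea gives a clean, CFSG-free disposal of the cyclic-Sylow case, the proposal as written is missing the main content of both directions of the theorem.
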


Cauchy's theorem asserts that if a prime $p$ divides the order of $G$, then $G$ has an element of order $p$, or, equivalently, has a subgroup isomorphic to $\Z_p$. More generally, Sylow's theorem asserts that if a prime power $p^a$ divides the order of $G$, then $G$ has a subgroup isomorphic to one of the (finitely many) groups of order $p^a$. These theorems are sharp, in the sense that if every group of order divisible by $n > 1$ necessarily has an element of order $n$, then $n$ is prime, and if it necessarily has a subgroup of order $n$, then $n$ is a prime power (see \cite{McCarthy70}). Nevertheless, Theorem~\ref{t:twocyclic} shows that for $n=2p$ where $p$ is a Fermat prime, there is a set $\W$ of three groups of order divisible by $n$ such that if $n$ divides the order of $G$, then $G$ has a subgroup isomorphic to a group in $\W$. More generally, we say that $n$ is a \emph{Cauchy number} if there is a finite set $\W$ of groups of order divisible by $n$ such that if $n$ divides the order of $G$, then $G$ has a subgroup isomorphic to a group in $\W$. Cauchy numbers form the focus of Section~\ref{s:cauchy}, where we completely determine them. 

\begin{thm} \label{t:cauchy}
Let $n$ be prime. Then $n$ is a Cauchy number if and only if one of the following holds:
\begin{enumerate}
\setlength\itemsep{0pt}
\item $n$ is a prime power;
\item $n=6$;
\item $n=2p^a$, where $p > 3$ is a Fermat prime and $a \geq 1$.
\end{enumerate}
\end{thm}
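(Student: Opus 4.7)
The plan is to establish both directions of the equivalence separately.

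\textbf{Sufficiency.} We handle the three cases in turn. For $n = p^a$, take $\W$ to be the finite set of all isomorphism types of groups of order $p^a$; Sylow's theorem then gives the Cauchy property immediately. For $n = 6$, we claim that $\W = \{\Z_6, S_3, A_4\}$ suffices. Given $G$ with $6 \mid |G|$, choose an element $y$ of order $3$ and an involution $t$: if $t$ centralizes a conjugate of $y$ we obtain $\Z_6$; if $t$ inverts a conjugate of $y$ we obtain $S_3$; otherwise $tyt^{-1}$ lies outside $\langle y \rangle$, and a further analysis of the subgroup $\langle t, y, tyt^{-1} \rangle$ produces a copy of $A_4$. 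Note that $A_4$ itself is forced into $\W$, as it has order $12$ yet contains no subgroup of order $6$. For $n = 2p^a$ with $p > 3$ a Fermat prime, the goal is to find in every $G$ with $2p^a \mid |G|$ a subgroup of the form $Q : \langle t \rangle$, where $Q$ is a $p$-group of order $p^a$ and $t$ is an involution in $N_G(Q)$. Taking $\W$ to be the (finite) list of all such semidirect products up to isomorphism then works: there are only finitely many $p$-groups $Q$ of order $p^a$ and, for each, only finitely many conjugacy classes of involutions in $\Aut(Q)$. The Fermat hypothesis $p - 1 = 2^k$ enters through the $2$-part of the relevant automorphism groups, and is used to produce the normalizing involution.

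\textbf{Necessity.} Conversely, suppose $n$ is not of any of the forms (a)--(c). We exhibit, in each remaining case, an infinite family of pairwise non-isomorphic minimal groups of order divisible by $n$, which precludes any finite $\W$ from working. If $n$ has at least two distinct odd prime divisors, we produce infinitely many minimal semidirect products $V : H$ where $V$ is an irreducible module over some $\mathbb{F}_r$ for a cyclic group $H$ whose order contains the remaining prime divisors, varying the field (or using $\PSL_2(q^k)$-type simple examples) to get non-isomorphic minimal groups. If $n = 2r^a$ with $r$ a non-Fermat odd prime, or $n = 2 \cdot 3^a$ with $a \geq 2$, we invoke Theorem~\ref{t:twocyclic}: it supplies infinitely many minimal $\{\Z_2, \Z_r\}$-covers, and each can be inflated (for instance by replacing a $\Z_r$-factor with a suitable group of order $r^a$) to yield a minimal group of order divisible by $n$. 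If $n = 2^b r^a$ with $b \geq 2$, we construct an infinite family using increasingly elaborate $2$-group actions on $r$-groups, with minimality controlled via irreducibility of the action.

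\textbf{Main obstacle.} The principal difficulty lies in the sufficiency of case (c). For an arbitrary $G$ with $2p^a \mid |G|$, neither a fixed Sylow $p$-subgroup nor a fixed involution need normalize the other, so the subgroup of controlled form $Q : \langle t \rangle$ must be produced by a delicate replacement argument rather than a direct construction. The Fermat hypothesis on $p$ must be used essentially, presumably via detailed analysis of how $2$-groups can act on $p$-groups when $p - 1$ is a power of $2$. A secondary obstacle is the necessity direction: each sub-case demands a tailored infinite family together with a minimality verification, and while Theorem~\ref{t:twocyclic} handles one large sub-case, the others need separate constructions.
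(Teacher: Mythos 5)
Your sufficiency argument for case (c) rests on a false premise. You propose to find, in every $G$ with $2p^a\mid|G|$, a subgroup of order exactly $2p^a$ of the form $Q{:}\langle t\rangle$, and to take $\W$ to be the finite list of such groups. No such subgroup need exist: already for $a=1$ the group $\Z_2^{2a}{:}\Z_p$ (with $p=2^a+1$ acting on a nontrivial irreducible $\mathbb{F}_2$-module) has order divisible by $2p$ but contains no subgroup of order $2p$ at all, since a Sylow $p$-subgroup fixes no nonzero vector of the module; the same happens for every $a$ with $V{:}P$, $V$ a nontrivial irreducible $\mathbb{F}_2P$-module (this is the content of Proposition~\ref{p:witness}, and is also why McCarthy's theorem, cited in the introduction, forces $n$ to be a prime power if every $n$-group is to contain a subgroup of order $n$). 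The paper's route is entirely different: it first proves that for \emph{every} $n$ there are only finitely many \emph{soluble} $n$-witnesses (Proposition~\ref{p:soluble}), and then shows that for $n=2p^a$ with $p>3$ Fermat all witnesses are soluble, by taking a minimal insoluble witness, showing every proper quotient is a $p$-group, and deriving a contradiction from Liebeck--Saxl's classification of primitive groups with odd-order point stabilisers (Lemma~\ref{l:liebeck-saxl}). Your case $n=6$ has a gap of the same kind: the assertion that $\langle t,y,tyt^{-1}\rangle$ yields a copy of $A_4$ is not an argument, and the paper points out that even this special case needs either the CFSG or the 1977 classification of simple groups with no element of order $6$.

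The necessity direction has a structural problem as well. By the same Proposition~\ref{p:soluble}, no family of \emph{soluble} groups (irreducible modules $V{:}H$ over cyclic groups, or ``increasingly elaborate $2$-group actions on $r$-groups'') can contain infinitely many $n$-witnesses, so all of your primary constructions in this direction are doomed; the infinite families must be insoluble. Moreover, your plan to ``inflate'' the minimal $\{\Z_2,\Z_r\}$-covers from Theorem~\ref{t:twocyclic} breaks down precisely in the hardest subcases: for $n=2\cdot 3^a$ ($a\ge 2$) and $n=4p$ ($p$ Fermat), every relevant two-prime divisor such as $6$ or $2p$ \emph{is} a Cauchy number with only three witnesses, so there is nothing to inflate, and a genuinely new insoluble family is required. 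The paper handles the reduction cleanly via Proposition~\ref{p:divisor} (a divisor of a Cauchy number is a Cauchy number, proved by lifting witnesses along $G\mapsto G$ or $G\times\Z_p$) combined with Theorem~\ref{t:twocyclic}, and then disposes of $n=12$, $18$, $4p$ ($p>5$) and $20$ with explicit infinite insoluble families: $\PSL_2(2^f)$, $\PSL_2(3^f)$, $\PSL_2(r)$ for suitable primes $r$, and Frattini extensions of $A_5$, respectively. Your parenthetical mention of ``$\PSL_2(q^k)$-type simple examples'' is the germ of the correct idea, but it needs to carry the whole argument, with a case-by-case verification that no proper subgroup has order divisible by $n$.
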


\begin{rem} \label{r:six}
Theorem~\ref{t:twocyclic} guarantees that every group of order divisible by $6$ contains a subgroup isomorphic to $\Z_6$, $S_3$ or $A_4$. While our proof of Theorem~\ref{t:twocyclic} uses the Classification of Finite Simple Groups, as we explain in Remark~\ref{r:six-cfsg}, this important special case can be proved without it.
\end{rem}

\begin{rem} \label{r:cauchy}
We say that $G$ is an \emph{$n$-witness} if $n$ divides the order of $G$ but $n$ does not divide the order of any proper subgroup of $G$. Then $n$ is a Cauchy number if and only if there are finitely many $n$-witnesses up to isomorphism. If $n = p^a$, then the $n$-witnesses are the groups of order $n$, and if $n = 2p$, for a Fermat prime $p$, then the $n$-witnesses are $\Z_{2p}$, $D_{2p}$, $\Z_2^{2a} : \Z_p$ as in Theorem~\ref{t:twocyclic}. Now consider $n = 2p^a$ for a Fermat prime $p > 3$ and $a > 1$. Proposition~\ref{p:witness} shows that any $n$-witness has order dividing $2^dp^a$ where $d$ is the maximum degree of a (not necessarily faithful) irreducible representation of group of order $p^a$ over $\mathbb{F}_2$, which is sharp since if $P$ is a group of order $p^a$ and $V$ is an irreducible $\mathbb{F}_2P$-module, then $V{:}P$ is a $n$-witness.
\end{rem}

In the process of proving Theorem~\ref{t:cauchy} we establish the following, which could be of independent interest (see Proposition~\ref{p:35}(a)).

\begin{thm} \label{t:60}
Every finite simple group with order divisible by $60$ has a subgroup isomorphic to $A_5$. 
\end{thm}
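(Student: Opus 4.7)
The plan is to invoke the Classification of Finite Simple Groups together with the identifications $A_5 \cong \PSL_2(4) \cong \PSL_2(5)$. By Dickson's classification of subgroups of $\PSL_2(q_0)$, one has $A_5 \leq \PSL_2(q_0)$ if and only if $5 \mid |\PSL_2(q_0)|$, equivalently $q_0 \equiv 0, \pm 1 \pmod 5$. The strategy is therefore, for each non-abelian finite simple $G$ with $60 \mid |G|$, to exhibit a subgroup of $G$ isomorphic to $\PSL_2(q_0)$ for some such $q_0$.

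The alternating case is immediate, since $A_5 \leq A_n$ for $n \geq 5$. For each of the $26$ sporadic simple groups (all of order divisible by $60$), a direct inspection of the ATLAS locates an $A_5$, typically inside an alternating or $\PSL_2$ maximal subgroup. For $G$ of Lie type in characteristic $p$ over $\mathbb{F}_q$, I first dispose of the families excluded by the hypothesis: a short cyclotomic calculation shows $3 \nmid |{}^2B_2(q)|$ and $5 \nmid |{}^2G_2(q)|$, so the Suzuki and small Ree groups do not arise. For every remaining family there is a natural $\PSL_2(q) \leq G$, obtained from a fundamental $A_1$ subsystem, from the symmetric-square embedding $\PSL_2(q) \hookrightarrow \PSL_3(q)$, from $\PSU_2(q) \cong \PSL_2(q)$ in the unitary case, or from $\Omega_3(q) \cong \PSL_2(q)$ in the odd-dimensional orthogonal case. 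If $q \equiv 0, \pm 1 \pmod 5$ then this $\PSL_2(q)$ already contains $A_5$ and we are done. Otherwise $q \equiv \pm 2 \pmod 5$, and $5 \mid |G|$ can enter only through a cyclotomic factor $\Phi_k(q)$ in which the multiplicative order of $q$ mod $5$, namely $4$, appears; in practice this means via $\Phi_4(q) = q^2 + 1$. Running through the order formulas family by family, this either makes $5 \nmid |G|$ and contradicts the hypothesis (for example $5 \nmid |\PSL_3(q)|,|\PSU_3(q)|,|G_2(q)|,|{}^3D_4(q)|$ when $q \equiv \pm 2 \pmod 5$), or forces $G$ to have sufficient rank to contain a subsystem subgroup housing a subfield $\PSL_2(q^2)$; since $q^2 \equiv -1 \pmod 5$, this copy already contains $A_5$. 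The key embedding here is $\PSL_2(q^2) \cong \PSp_2(q^2) \leq \PSp_4(q)$ via restriction of scalars, and $\PSp_4(q)$ slots as a natural or subsystem subgroup into $\PSL_n(q)$ $(n \geq 4)$, $\PSU_n(q)$ $(n \geq 4)$, $\PSp_{2n}(q)$ $(n \geq 2)$, $P\Omega_n^{\epsilon}(q)$ $(n \geq 6)$, and each of $F_4(q), E_6(q), E_7(q), E_8(q), {}^2E_6(q)$.

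I expect the main obstacle to be the twisted exceptional family $G = {}^2F_4(q)$ with $q = 2^{2m+1} \geq 8$, where $5$ enters only through $\Phi_4(q) = q^2+1$ and no root $A_1$ subgroup has order divisible by $5$. For this case I would appeal to Malle's classification of the maximal subgroups of ${}^2F_4(q)$, which includes $\PSp_4(q) \cdot 2$; the embedding $\PSL_2(q^2) \leq \PSp_4(q)$ above then gives $A_5 \leq G$. The Tits group ${}^2F_4(2)'$ is handled separately by noting that it has a maximal subgroup $A_6 \cdot 2$ and so contains $A_5$. Beyond this, the remaining work is the careful subsystem/order accounting across the other twisted and exceptional families, but in each case either $60 \mid |G|$ forces $q \equiv 0, \pm 1 \pmod 5$ (so the natural $\PSL_2(q)$ suffices) or a standard classical subsystem of $\PSp_4$- or $\Omega_4^-$-type supplies the required $\PSL_2(q^2)$.
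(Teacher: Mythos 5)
Your proposal is correct and follows essentially the same route as the paper's proof (given there as Proposition~\ref{p:35}(a)): a CFSG case analysis using Dickson's criterion for $A_5\leq\PSL_2(q_0)$, elimination of the Suzuki and small Ree groups by order, the observation that $q\equiv\pm2\pmod 5$ forces $5\mid|G|$ to come from $\Phi_4(q)$ (killing $\PSL_2$, $\PSL_3$, $\PSU_3$, $G_2$, ${}^3D_4$), and the embedding $\PSL_2(q^2)\leq\PSp_4(q)$ propagated through subsystem subgroups of everything of larger rank. The only real divergence is ${}^2F_4(q)$, where you invoke Malle's maximal subgroup $\Sp_4(q){:}2$ while the paper descends through subfield subgroups to the Tits group and its $A_6.2^2$; both work.
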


Turning to Question~\ref{q:property}, in Section~\ref{s:simple}, we begin with simple groups.

\begin{thm} \label{t:simple}
Let $M$ and $N$ be nonabelian finite simple groups. If $G$ is a minimum cover of $\{ M, N \}$ then either $G = M \times N$ or $G$ is simple and $|G| \leq |M| \cdot |N|$.
\end{thm}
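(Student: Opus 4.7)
The plan is to exploit a minimal normal subgroup $K$ of $G$ together with the minimality of $|G|$. Since $M \times N$ is always an $\{M,N\}$-cover of order $|M|\cdot|N|$, minimality immediately gives $|G| \leq |M|\cdot|N|$, and if $G$ happens to be simple we are done. So the real task is to show that whenever $G$ is not simple, $G \cong M \times N$.

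Suppose then that $G$ is not simple and let $K$ be a minimal normal subgroup; fix subgroups $M_0, N_0 \leq G$ with $M_0 \cong M$ and $N_0 \cong N$. If both $M_0, N_0 \leq K$ then the proper subgroup $K$ is already an $\{M,N\}$-cover, contradicting the minimality of $|G|$. After swapping the roles of $M$ and $N$ if necessary, I may therefore take $M_0 \not\leq K$; simplicity of $M_0$ then forces $M_0 \cap K = 1$, so $M$ embeds in $G/K$. If some copy $N_0 \cong N$ also satisfied $N_0 \cap K = 1$, then $N$ would embed in $G/K$ too, making $G/K$ an $\{M,N\}$-cover of strictly smaller order than $G$ and again contradicting minimality. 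Hence every copy of $N$ in $G$ meets $K$ nontrivially, and by simplicity every copy of $N$ lies inside $K$.

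I would then run a short order count. Since $K$ contains the nonabelian simple group $N_0$, it is nonabelian, so as a minimal normal subgroup it is a direct product of isomorphic nonabelian simple groups. Combining $|K| \geq |N_0| = |N|$ and $|G/K| \geq |M_0K/K| = |M|$ with the upper bound $|G| \leq |M|\cdot|N|$ forces equality throughout. Hence $|K| = |N|$ and $N_0 = K$ is simple, so $K \cong N$; meanwhile $G/K \cong M$, and since $M_0 \cap K = 1$ the group $G = M_0K$ realises an internal semidirect product $G \cong N \rtimes M$ under some homomorphism $\phi \colon M \to \Aut(N)$.

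The main obstacle, which I expect to be the crux of the proof, is ruling out a nontrivial $\phi$. Since $M$ is simple, $\phi$ is either trivial, giving $G \cong M \times N$ as required, or injective. In the injective case, composing with the quotient $\Aut(N) \twoheadrightarrow \mathrm{Out}(N)$ yields a homomorphism from $M$ into $\mathrm{Out}(N)$; but by the Schreier conjecture (a consequence of CFSG), $\mathrm{Out}(N)$ is solvable, so that composition must be trivial and the image of $M$ lies inside $\mathrm{Inn}(N) \cong N$. This embeds $M$ in $N$, so $N$ itself is an $\{M,N\}$-cover of order $|N| < |M|\cdot|N| = |G|$, contradicting the minimality of $G$. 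Hence $\phi$ is trivial and $G \cong M \times N$, completing the proof.
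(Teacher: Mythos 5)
Your proof is correct, but it reaches the conclusion by a genuinely different route from the paper. The paper first invokes its general composition-series result (Theorem~\ref{t:composition}) to reduce to the case where $G$ has a normal subgroup isomorphic to one of the two simple groups with quotient isomorphic to the other; it then uses the fact that a minimal cover of perfect groups is perfect (Proposition~\ref{p:residual}), so $G$ has no soluble quotient, and the Schreier conjecture forces every element of $G$ to induce an \emph{inner} automorphism of the normal factor, whence $G=MC_G(M)=M\times C_G(M)$. You instead bypass the composition-series machinery entirely: you take a minimal normal subgroup $K$, use simplicity to show one group misses $K$ and the other lies inside it, and then pin down $K\cong N$, $G/K\cong M$ and the semidirect-product structure $N\rtimes_\phi M$ purely by the order count against the a priori bound $|G|\le|M|\cdot|N|$. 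Your use of the Schreier conjecture is also slightly different in flavour: rather than splitting the extension directly (the image of $\phi$ in $\mathrm{Inn}(N)$ would in fact let you replace $M_0$ by a centralising complement), you derive a contradiction with minimality, since an injective $\phi$ would embed $M$ into $\mathrm{Inn}(N)\cong N$ and make $N$ alone a strictly smaller cover. The paper's approach buys generality — Theorem~\ref{t:composition} handles sets of $n$ simple groups and yields the extra conclusion that the product of the composition factors is again a minimum cover, which is used later for Corollary~\ref{c:simple_product} — while yours is more self-contained and makes the role of the order bound $|M|\cdot|N|$ completely explicit. Both arguments depend on CFSG only through the solubility of outer automorphism groups.
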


\begin{rem} \label{r:simple}
Both possibilities in Theorem~\ref{t:simple} arise. In fact, they can arise simultaneously. By Corollary~\ref{c:simple_product}, $A_{12}$ and $A_7 \times {\rm M}_{12}$ are both minimum covers of $\{A_7, {\rm M}_{12} \}$. In particular, $\{M,N\}$ can have nonisomorphic minimum covers, but we do not know whether $\{M,N\}$ can have nonisomorphic minimum \emph{simple} covers. Section~\ref{s:simple} gives further results and questions in this direction.
\end{rem}

Section~\ref{s:fp} considers $p$-groups. Every minimal cover for a set of $p$-groups is a $p$-group, and we focus on $p^n$-covers, i.e. $\F$-covers where $\F$ is the set of all groups of order $p^n$. We prove that there are only finitely many minimal $p^2$-covers, but infinitely many minimal $2^3$-covers (see Theorem~\ref{t:inf8}), and we determine (rather weak) upper and lower bounds for the order of a minimum $p^n$-cover: the upper bound is $p^{(p^n-1)/(p-1)}$, and the lower bound $p^{(c+o(1))n^2}$ with $c=2/27$
(see Theorem~\ref{t:227}).

Many other properties are inherited by minimal or minimum covers. For instance, if every group in $\F$ is perfect, then every minimal $\F$-cover is perfect (see Example~\ref{ex:residual}(a)), and if every group in $\F$ is nilpotent, then every minimum $\F$-cover is nilpotent (see Theorem~\ref{t:can}). However, for other properties, this fails. For instance, if every group in $\F$ is soluble, then there need not exist a soluble $\F$-cover (see Example~\ref{ex:a5}). 

We leave open the question of whether if $\F$ is a finite set of finite abelian groups, then there necessarily exists an abelian minimum $\F$-cover. However, Section~\ref{s:can} does present a number of results in this direction. In particular, the main result of that section is an algorithm to find the smallest abelian group containing a given set of finite abelian groups. Since abelian groups are direct products of their Sylow subgroups, it suffices to solve the problem for abelian $p$-groups. Our algorithm has the following consequence (see Corollary~\ref{c:abelian}).

\begin{thm} \label{t:abelian}
The smallest abelian group which embeds all abelian groups of order $p^n$ is unique up to isomorphism and has order $p^{f(n)}$, where
\[
f(n)=\sum_{k=1}^n\lfloor n/k\rfloor.
\]
\end{thm}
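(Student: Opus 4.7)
The plan is to translate the theorem into a combinatorial question about partitions and then solve the resulting optimisation problem. By the fundamental theorem of finite abelian groups, every abelian group of order $p^n$ is isomorphic to $A(\lambda):=\bigoplus_i\Z_{p^{\lambda_i}}$ for a unique partition $\lambda\vdash n$, and every finite abelian $p$-group arises as $A(\mu)$ for some partition $\mu$. The key ingredient I would invoke is the classical Young diagram criterion for subgroups: $A(\lambda)$ embeds in $A(\mu)$ if and only if $\lambda_i\leq\mu_i$ for every $i$, where parts are padded with zeros as needed.

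Granting this, the question becomes: find a partition $\mu$ of smallest total weight $|\mu|=\sum_i\mu_i$ with $\mu_i\geq\lambda_i$ for every partition $\lambda\vdash n$ and every $i$. Since the constraints are componentwise, the unique minimiser is $\mu_i^\ast:=\max_{\lambda\vdash n}\lambda_i$, and this sequence is automatically a partition because $\lambda_{i'}\leq\lambda_i$ whenever $i'\geq i$. To evaluate $\mu_i^\ast$, the inequalities $\lambda_1\geq\cdots\geq\lambda_i$ together with $\lambda_1+\cdots+\lambda_i\leq n$ force $\lambda_i\leq\lfloor n/i\rfloor$, and this bound is attained by distributing $n$ as evenly as possible into $i$ parts. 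Hence $\mu_i^\ast=\lfloor n/i\rfloor$, yielding the unique extremal group $\bigoplus_{k=1}^n\Z_{p^{\lfloor n/k\rfloor}}$ of order $p^{f(n)}$.

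The main obstacle is establishing the Young diagram criterion. Sufficiency is immediate: inside $A(\mu)$, the subgroup $\bigoplus_i p^{\mu_i-\lambda_i}\Z_{p^{\mu_i}}$ realises $A(\lambda)$. For necessity I would observe that for $G=A(\mu)$ the $\mathbb{F}_p$-dimension of $(p^{j-1}G)[p]$ equals $\#\{i:\mu_i\geq j\}=\mu_j'$, the $j$-th entry of the conjugate partition. Since $(p^{j-1}H)[p]\leq(p^{j-1}G)[p]$ whenever $H\leq G$, an embedding $A(\lambda)\hookrightarrow A(\mu)$ forces $\lambda_j'\leq\mu_j'$ for every $j$, which on passing to conjugate partitions is equivalent to $\lambda_i\leq\mu_i$ for every $i$.
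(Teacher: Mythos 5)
Your proposal is correct and follows essentially the same route as the paper: both reduce the problem to the componentwise (Young-diagram) embedding criterion for abelian $p$-groups and then observe that the optimal partition has $k$-th part $\max_{\lambda\vdash n}\lambda_k=\lfloor n/k\rfloor$ via the inequality $k\lambda_k\le\lambda_1+\cdots+\lambda_k\le n$. Your necessity argument for the criterion (counting $\dim_{\mathbb{F}_p}(p^{j-1}G)[p]$) is just a slightly more explicit version of the paper's argument, which embeds $(\Z_{p^{b(j)}})^j$ and counts how many invariant factors of the ambient group can accommodate it.
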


This function $f$ was studied by Dirichlet who found its asymptotic behaviour. It is sequence A006218 in the On-Line Encyclopedia of Integer Sequences~\cite{oeis}, where several further areas where it arises are given.

We conclude the paper with a dual version of the ideas introduced in the paper and number of further directions and open problems.

\paragraph{Notation.} 
Our notation is fairly standard, but note that we write $\Z_n$ for the cyclic group of order~$n$ and $D_{2n}$ for the dihedral group of order $2n$. For sporadic simple groups we follow the \emph{$\mathbb{ATLAS}$ of Finite Groups} \cite{ATLAS}. We write $n_p$ for the $p$-part of $n$, and $o_p(n)$ for the order of $n$ modulo $p$.

\paragraph{Acknowledgements.} 
We are grateful to Jon Awbrey and Michael Kinyon for helpful comments. 
The fourth author is an EPSRC Postdoctoral Fellow (EP/X011879/1). 
In order to meet institutional and research funder open access requirements, any accepted manuscript arising shall be open access under a Creative Commons Attribution (CC BY) reuse licence with zero embargo.

\section{General results} \label{s:prelims}

We begin with a simple result to illustrate the central concepts of the paper, showing in particular that any finite set of finite groups has a cover.

\begin{prop} \label{p:order}
Let $\F$ be a finite set of finite groups, and let $G$ be an $\F$-cover. Then the following hold:
\begin{enumerate}
\item We have that $\lcm\{|F|:F\in\F\}$ divides $|G|$.
\item If $G$ is a minimum $\F$-cover, then $|G|\le\prod_{F\in\F}|F|$.
\item If the orders of the groups in $\F$ are pairwise coprime, then
equality holds in (b).
\end{enumerate}
\end{prop}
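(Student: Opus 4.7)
The plan is to prove the three parts in order, exploiting the basic behaviour of orders under embedding together with the fact that $\F$ is finite so the direct product of all its members exists and itself covers $\F$.

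For part (a), the approach is simply Lagrange's theorem: since every $F \in \F$ embeds in $G$ by hypothesis, $|F|$ divides $|G|$ for each $F \in \F$. From this the lowest common multiple $\lcm\{|F| : F \in \F\}$ also divides $|G|$, which is what is wanted. No further machinery is needed here.

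For part (b), I would exhibit an explicit cover whose order is the desired upper bound and then invoke minimality. Specifically, put $H = \prod_{F \in \F} F$; this is well-defined since $\F$ is finite, and each $F \in \F$ embeds in $H$ as the corresponding direct factor, so $H$ is an $\F$-cover of order $\prod_{F\in\F} |F|$. Since $G$ is a \emph{minimum} $\F$-cover, $|G| \le |H| = \prod_{F\in\F}|F|$.

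For part (c), I would combine (a) and (b). If the orders $|F|$ are pairwise coprime, then $\lcm\{|F| : F \in \F\} = \prod_{F \in \F}|F|$, so part (a) gives $\prod_{F \in \F}|F| \mid |G|$, hence $\prod_{F \in \F}|F| \le |G|$. Together with the inequality in (b) this forces equality. There is no real obstacle in any of this: the statement packages three elementary observations, and the only thing to be careful about is pointing out that the direct product of the members of $\F$ genuinely serves as an $\F$-cover.
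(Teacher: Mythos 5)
Your proposal is correct and follows exactly the same route as the paper: Lagrange's theorem for (a), the direct product $\prod_{F\in\F}F$ as an explicit cover for (b), and the coprimality identity $\lcm = \prod$ to combine (a) and (b) for (c). The paper merely states these steps in one line, whereas you spell them out.
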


\begin{proof}
(a) is immediate from Lagrange's Theorem; (b) follows from the fact that the
direct product of the groups in $\F$ is a cover; and (c) is immediate from (a)
and (b). 
\end{proof}

Note that, if the orders of the groups in $\F$ are powers of distinct primes,
then a minimum $\F$-cover is a group which has these as its Sylow subgroups.

\begin{example} 
A minimum cover of $\{\Z_3$, $(\Z_2)^2,
\Z_5\}$ has order $60$; any group of order $60$ having these three
groups as its Sylow subgroups is an example. This holds for seven of the
$13$ groups of order 60, including $S_3\times D_{10}$, $A_4\times\Z_5$,
and $A_5$.
\end{example}

Proposition~\ref{p:order} raises the following natural problem.

\begin{question}
Characterise sets $\F$ of finite groups for which the order of a minimum $\F$-cover is the least common multiple of the orders of the groups in $\F$.
\end{question}

Proposition~\ref{p:order} shows that the order of a minimum $\F$-cover is bounded by a function of $\F$ (namely, the product of the orders of the groups in $\F$). In particular, there are only finitely many minimum $\F$-covers. However, there may be infinitely many minimal $\F$-covers, which motivates Question~\ref{q:finite} in the introduction. In this context, the following result will be useful.

\begin{prop}\label{piFcovers}
Let $\pi$ be a finite set of primes, and $\F$ a finite set of $\pi$-groups.
If there exists a minimal $\F$-cover which is not a $\pi$-group, then
there exist infinitely many minimal $\F$-covers.
\end{prop}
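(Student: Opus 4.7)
The plan is to construct infinitely many pairwise non-isomorphic minimal $\F$-covers as Frattini $q$-extensions of the given cover $G$, where $q \notin \pi$ is a prime dividing $|G|$ (such $q$ exists because $G$ is not a $\pi$-group). Here a \emph{Frattini $q$-extension} of $G$ means a finite group $E$ equipped with a normal $q$-subgroup $A \leq \Phi(E)$ such that $E/A \cong G$.

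First I would verify that every Frattini $q$-extension $E$ of $G$ is itself a minimal $\F$-cover. For each $F \in \F$ the orders $|F|$ and $|A|$ are coprime, because $F$ is a $\pi$-group while $A$ is a $q$-group with $q \notin \pi$; so the Schur--Zassenhaus theorem lifts the embedding $F \hookrightarrow G \cong E/A$ to an embedding $F \hookrightarrow E$, and hence $E$ covers $\F$. For minimality, if $H \leq E$ is an $\F$-cover then each $F \in \F$ embeds in $H$ and, by the same coprime-order argument, injects into $HA/A \leq G$. Hence $HA/A$ is itself an $\F$-cover, so $HA/A = G$ by the minimality of $G$; that is, $HA = E$, and $A \leq \Phi(E)$ then forces $H = E$.

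Next I would argue that $G$ admits Frattini $q$-extensions of unbounded order. Since $q \mid |G|$, the universal pro-$q$ Frattini cover of $G$ has non-trivial (indeed infinite) pro-$q$ kernel---its Sylow pro-$q$ subgroup is the universal pro-$q$ Frattini cover of a non-trivial Sylow $q$-subgroup of $G$, which is a free pro-$q$ group of positive rank---so its finite quotients provide Frattini $q$-extensions of $G$ of unbounded order, yielding infinitely many pairwise non-isomorphic minimal $\F$-covers by the previous paragraph.

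The main obstacle is this existence step, which invokes the theory of universal Frattini covers. A more hands-on alternative proceeds inductively: starting from $E_0 = G$, for each $n \geq 0$ one takes $E_{n+1}$ to be a non-split extension of $E_n$ by a simple $\mathbb{F}_q E_n$-module $M$ (such non-split extensions exist whenever $q \mid |E_n|$, by a cohomological argument producing a simple module $M$ with $H^2(E_n,M) \neq 0$). Because $M$ is minimal normal in $E_{n+1}$ and the extension is non-split, any subgroup $L < E_{n+1}$ with $LM = E_{n+1}$ would give $L \cap M \in \{1, M\}$ by minimality of $M$, and $L \cap M = 1$ would make $L$ a genuine complement of $M$, contradicting non-splitness; hence $M \leq \Phi(E_{n+1})$. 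Since Frattini extensions compose (using $\Phi(K/N) = \Phi(K)/N$ when $N \leq \Phi(K)$), each $E_n$ is a Frattini $q$-extension of $G$ with $|E_n| \to \infty$.
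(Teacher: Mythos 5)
Your argument is correct and is essentially the paper's: the paper likewise takes a Frattini extension with kernel a $p$-group for a prime $p\notin\pi$ dividing $|G|$, uses the coprimality $F\cap E=1$ (your Schur--Zassenhaus step) to push any covering subgroup down to a proper $\F$-cover inside $G$, and iterates the construction to get unboundedly large examples. The only real difference is that the paper obtains the non-trivial Frattini extension at each stage by citing Gasch\"utz's theorem (Doerk--Hawkes, Ch.~B, Thm.~11.8), whereas you invoke the universal pro-$q$ Frattini cover or an unproved cohomological existence claim ($H^2(E_n,M)\neq 0$ for some simple module $M$), which is precisely the fact that citation supplies.
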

\begin{proof}
Let $G$ be a minimal $\F$-cover whose order is divisible by a prime
$p\notin\pi$. By a theorem of Gasch\"utz (see \cite[Chapter B, Theorem 11.8]{dh}), there exists a Frattini extension $H$ of $G$, i.e. $H$ has an elementary abelian normal $p$-subgroup $E\le\Phi(H)$ such that $H/E\cong G$. Let $M<H$ be a maximal subgroup. Then $E\le\Phi(H)\le M$. Suppose by way of contradiction that $M$ is an $\F$-cover. For every $F\in\F$, we may assume that $F\le M$. Since $F$ is a $\pi$-group, it follows that $F\cap E=1$ and 
\[F\cong FE/E\le M/E<H/E\cong G.\] 
But this implies that $M/E$ is an $\F$-cover. This contradiction shows that $H$ is a minimal $\F$-cover. Now we can repeat this process with $H$ instead of $G$. This yields an infinite series of minimal $\F$-covers.
\end{proof}

Let us now turn to Question~\ref{q:property} on whether minimal covers inherit group-theoretic properties. We begin with some simple observations.

\begin{prop} \label{p:pgroup}
Let $p$ be a prime, and let $\F$ be a finite set of finite $p$-groups. Then every minimal $\F$-cover is a $p$-group. 
\end{prop}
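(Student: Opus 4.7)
The plan is to use Sylow's theorem to show that a Sylow $p$-subgroup of $G$ is already an $\F$-cover, which by minimality forces $G$ itself to be a $p$-group.

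Let $G$ be a minimal $\F$-cover and fix a Sylow $p$-subgroup $P$ of $G$. For each $F \in \F$, there is an embedding $\iota_F \colon F \hookrightarrow G$. Since $F$ is a finite $p$-group, its image $\iota_F(F)$ is a $p$-subgroup of $G$, and therefore, by Sylow's theorem, $\iota_F(F)$ is contained in some Sylow $p$-subgroup $P_F$ of $G$.

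All Sylow $p$-subgroups of $G$ are conjugate (in particular, isomorphic), so there exists $g \in G$ with $g P_F g^{-1} = P$. Then conjugation by $g$ provides an embedding of $F$ into $P$. Applying this for each of the finitely many $F \in \F$ (doing so independently for each $F$), we conclude that $P$ is an $\F$-cover.

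Since $P \leq G$ and $G$ is a minimal $\F$-cover, we must have $P = G$, so $G$ is a $p$-group. The argument is essentially a one-step application of Sylow's theorem, so there is no real obstacle; the only point to be careful about is that we need a single Sylow $p$-subgroup (not a different one for each $F$) to contain copies of all the $F$'s, which is handled by passing to a fixed conjugate.
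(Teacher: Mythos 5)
Your proof is correct and follows exactly the same route as the paper: embed each $F\in\F$ into $G$, use Sylow's theorem to push each copy into a fixed Sylow $p$-subgroup $P$ via conjugation, conclude $P$ is an $\F$-cover, and invoke minimality to get $P=G$. No issues.
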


\begin{proof}
Let $G$ be a minimal $\F$-cover, and $P$ a Sylow $p$-subgroup of $G$. Then every group in $\F$ is embedded in $G$, and so conjugate to a subgroup of $P$. Hence $P$ is an $\F$-cover. By minimality, $P=G$. 
\end{proof}

\begin{prop}\label{hrd4}
Let $\mathcal{A}$ be a class of groups which is closed under the taking of subgroups and direct products. If $\mathcal{F}$ is a subset of $\mathcal{A}$ then there exists an $\mathcal{A}$-group which is a minimal $\mathcal{F}$-cover.
\end{prop}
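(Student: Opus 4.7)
The plan is to exhibit a concrete $\mathcal{A}$-group which is an $\mathcal{F}$-cover, and then pass to a subgroup of smallest possible order, exploiting the fact that $\mathcal{A}$ is closed under subgroups. First I would set
\[
G = \prod_{F \in \mathcal{F}} F.
\]
Since $\mathcal{F}$ is a finite set of finite groups contained in $\mathcal{A}$ and $\mathcal{A}$ is closed under direct products, $G$ lies in $\mathcal{A}$, and each $F \in \mathcal{F}$ embeds into $G$ via the natural factor injection, so $G$ is an $\mathcal{F}$-cover.

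Next, I would consider the collection $\mathcal{S}$ of all subgroups of $G$ which are $\mathcal{F}$-covers. This is non-empty (it contains $G$) and each of its members is finite, so there exists $H \in \mathcal{S}$ with $|H|$ minimum. Because $\mathcal{A}$ is closed under taking subgroups and $G \in \mathcal{A}$, we have $H \in \mathcal{A}$.

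Finally, I would verify that $H$ is a minimal $\mathcal{F}$-cover in the sense defined in the introduction. If $K < H$ were a proper subgroup that was also an $\mathcal{F}$-cover, then $K$ would be a subgroup of $G$ with $|K| < |H|$, contradicting the choice of $H$ as an element of $\mathcal{S}$ of minimum order. Hence no proper subgroup of $H$ is an $\mathcal{F}$-cover, and $H$ is the desired $\mathcal{A}$-group which is a minimal $\mathcal{F}$-cover.

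There is no serious obstacle here: the argument uses nothing beyond closure of $\mathcal{A}$ under direct products (to produce an initial cover in $\mathcal{A}$) and closure under subgroups (to preserve membership in $\mathcal{A}$ when descending), together with finiteness to extract a minimum-order cover inside the chosen ambient group.
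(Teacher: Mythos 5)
Your proposal is correct and follows essentially the same route as the paper: form the direct product of the groups in $\mathcal{F}$, note it lies in $\mathcal{A}$ and is an $\mathcal{F}$-cover, and then descend to a minimal cover inside it using closure under subgroups. The only difference is that you make the descent explicit by choosing a subgroup of minimum order, which the paper leaves implicit.
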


\begin{proof}
Let $G=\Pi_{H\in\F} H$. Then $G$ is an $\mathcal{A}$-group which is also an $\mathcal{F}$-cover. So $G$ contains a minimal $\mathcal{F}$-cover which is an $\mathcal{A}$-group. 
\end{proof}

It is not clear when we can obtain a minimum $\F$-cover in this way. We will see in Section~\ref{s:can} that this is the case for nilpotent groups, but we have been unable to decide the apparently easier case of abelian groups.

The following two theorems give further results in this direction.

\begin{prop} \label{p:residual}
Suppose that $\X$ is a subgroup-closed class of finite groups. Let 
$\F$ be a finite set of finite groups, none of which has a non-trivial
$\X$-group as a quotient, and let $G$ be a minimal $\F$-cover. Then $G$ has
no non-trivial $\X$-group as a quotient.
\end{prop}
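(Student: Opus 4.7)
The plan is to argue by contradiction: assume $G$ has a non-trivial $\X$-group quotient and produce a proper subgroup of $G$ which is still an $\F$-cover, violating minimality.

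Concretely, suppose there exists $N \leqn G$ with $N \neq G$ and $G/N$ a non-trivial $\X$-group. For each $F \in \F$, fix an embedding $\iota_F \colon F \hookrightarrow G$ (which exists since $G$ is an $\F$-cover) and consider the composite homomorphism
\[
\pi_F \colon F \xrightarrow{\iota_F} G \twoheadrightarrow G/N.
\]
The image $\pi_F(F)$ is a subgroup of the $\X$-group $G/N$, so by the hypothesis that $\X$ is subgroup-closed, $\pi_F(F)$ lies in $\X$. On the other hand, $\pi_F(F)$ is a quotient of $F$, and $F$ has no non-trivial $\X$-group as a quotient, so $\pi_F(F) = 1$.

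Therefore $\iota_F(F) \leq N$ for every $F \in \F$, which shows that $N$ is itself an $\F$-cover. Since $G/N \neq 1$, we have $N < G$, contradicting the minimality of $G$. Hence no such $N$ exists, completing the proof. There is no real obstacle here; the only subtlety is in combining the two properties of $\pi_F(F)$ (being a subgroup of an $\X$-group and being a quotient of $F$) to force it to be trivial, which is exactly where the hypotheses that $\X$ is subgroup-closed and that no $F \in \F$ has a non-trivial $\X$-quotient are used.
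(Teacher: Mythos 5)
Your argument is correct and is essentially the same as the paper's: both identify the image of each $F\in\F$ in $G/N$ (i.e.\ $F/(F\cap N)\cong FN/N$) as a subgroup of the $\X$-group $G/N$ and a quotient of $F$, force it to be trivial, conclude $F\le N$, and invoke minimality. No differences worth noting.
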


\begin{proof} 
Suppose that $G/N\in\X$. Then for any group $H\in\F$ with $H\le G$, we have $H/H\cap N\cong HN/N\leq G/N\in\X$. So $H/H\cap N\in\X$ which implies $H\subseteq N$. By minimality of $G$, we have $N=G$.
\end{proof}

\begin{example} \label{ex:residual}
Let us record some applications of Proposition~\ref{p:residual}.
\begin{enumerate}
\item Let $\X$ be the class of finite abelian groups. The condition
that $G$ has no non-trivial homomorphism to an $\X$-group means that
$G$ is perfect. So we deduce that, if every group in $\F$ is perfect,
then any minimal $\F$-cover is perfect.
\item Let $\X$ be the class of finite soluble groups. The condition
that $G$ has no non-trivial homomorphism to an $\X$-group means that
$G$ is equal to its soluble residual. So, if every group in $\F$ is
equal to its soluble residual, then the same is true of any minimal
$\F$-cover.
\end{enumerate}
\end{example}

There is a dual result, as follows.

\begin{prop} \label{p:radical}
Suppose that $\X$ is a subgroup-closed class of finite groups. Let
$\F$ be a finite set of finite groups, and suppose that no group in
$\F$ has a non-trivial normal $\X$-subgroup. Let $G$ be
a co-minimal cover of $\F$. Then $G$ has no non-trivial normal
$\X$-subgroup.
\end{prop}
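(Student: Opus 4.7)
The plan is to mirror the argument of Proposition~\ref{p:residual}, but with the roles of ``subgroup of $G$'' and ``quotient of $G$'' swapped. So suppose for contradiction that $G$ has a non-trivial normal $\X$-subgroup $N$, and aim to show that $G/N$ is an $\F$-cover, contradicting co-minimality.

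First I would fix an embedding of each $H\in\F$ into $G$, and study $H\cap N$. The point is that $H\cap N$ is simultaneously a subgroup of $N$ (hence lies in $\X$, using that $\X$ is subgroup-closed) and a normal subgroup of $H$ (using that $N$ is normal in $G$). By hypothesis, $H$ has no non-trivial normal $\X$-subgroup, so $H\cap N=1$.

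Consequently the canonical projection $G\to G/N$ restricts to an injection on $H$, giving an embedding $H\cong HN/N\le G/N$. Since this holds for every $H\in\F$, the quotient $G/N$ is an $\F$-cover. As $N\ne1$, this contradicts the assumption that $G$ is co-minimal, and so no such $N$ exists.

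There is no real obstacle here; the only subtlety is observing that subgroup-closedness of $\X$ is precisely what forces $H\cap N\in\X$, which is what allows the hypothesis on $\F$ to apply. This is exactly the dual role played by ``quotient-closedness would be nice but $\X$ being subgroup-closed suffices'' in Proposition~\ref{p:residual}, and it explains the perfect symmetry between the two propositions.
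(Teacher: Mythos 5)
Your proof is correct and follows essentially the same route as the paper's: intersect each embedded $H\in\F$ with $N$, use subgroup-closedness of $\X$ together with normality of $H\cap N$ in $H$ to conclude $H\cap N=1$, embed $H$ into $G/N$, and invoke co-minimality. No issues.
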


\begin{proof}
Let $G$ be a co-minimal cover of $\F$, and suppose that $G$ has a normal $\X$-subgroup $N$. For each group $H\in\F$,
we have $H\cap N\in\X$. So $H\cap N=\{1\}$. Hence $H\cong H/H\cap N\cong HN/N\leq G/N$.
By co-minimality, $G/N\cong G$, so $N=\{1\}$.
\end{proof}

\begin{example} \label{ex:radical}
We now give some applications of Proposition~\ref{p:radical}.
\begin{enumerate}
\item Let $\X$ be the class of finite abelian groups. If no group in
$\F$ has a non-trivial abelian normal subgroup, then a co-minimal
$\F$-cover has no non-trivial abelian normal subgroup.
\item Let $\X$ be the class of finite soluble groups. The condition
``no non-trivial normal $\X$-subgroup'' means that the soluble
radical is trivial. So, if all groups in $\F$ have trivial soluble
radical, then the same is true of a co-minimal cover.
\end{enumerate}
\end{example}

We conclude with some remarks on $n$-covers, which, recall, are $\F$-covers where $\F$ consists of the groups of order $n$.

\begin{prop}\label{hrd40}
Let $n=p_1^{\alpha_1}\cdots p_k^{\alpha_k}$ and $G$ be a minimal $n$-cover. If $P_i$ is a Sylow $p_i$-subgroup of $G$ then $P_i$ is a  $p_i^{\alpha_i}$-cover.
\end{prop}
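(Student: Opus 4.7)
The strategy is to exploit that for any $p_i$-group $Q$ of order $p_i^{\alpha_i}$, we can construct a group of order $n$ whose Sylow $p_i$-subgroup is isomorphic to $Q$, and then use Sylow's theorem inside $G$ to locate a conjugate copy of $Q$ inside $P_i$.

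In detail: fix $i$ and let $Q$ be an arbitrary group of order $p_i^{\alpha_i}$; set $m=n/p_i^{\alpha_i}$, so $\gcd(m,p_i)=1$. Form $H = Q \times \Z_m$, which has order $n$. Since $G$ is an $n$-cover, $H$ embeds in $G$. Under this embedding, the image of $Q \times 1$ is a subgroup of $G$ of order $p_i^{\alpha_i}$; because $\gcd(m,p_i)=1$, this image is in fact the Sylow $p_i$-subgroup of the image of $H$. By Proposition~\ref{p:order}(a), $p_i^{\alpha_i}$ divides $|G|$, so the image of $Q$ is a $p_i$-subgroup of $G$ and is therefore contained in some Sylow $p_i$-subgroup of $G$, which is conjugate (hence isomorphic) to $P_i$. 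Thus $Q$ embeds in $P_i$, and since $Q$ was arbitrary, $P_i$ is a $p_i^{\alpha_i}$-cover.

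There is essentially no obstacle: the argument is a routine application of Sylow's theorem. The only delicate point is the choice of $H$ as a direct product, which ensures that the given $Q$ — rather than some potentially larger $p_i$-subgroup — is the entire Sylow $p_i$-subgroup of $H$; coprimality of $p_i^{\alpha_i}$ and $m$ makes this work. It is worth remarking that minimality of $G$ is not actually used: the same proof shows that in any $n$-cover, each Sylow $p_i$-subgroup is a $p_i^{\alpha_i}$-cover.
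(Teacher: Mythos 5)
Your proof is correct and follows essentially the same route as the paper: both construct a group of order $n$ of the form $Q\times(\text{cyclic complement of coprime order})$, embed it in $G$, and use Sylow's theorem to place $Q$ inside $P_i$. Your observation that minimality of $G$ is not needed is also accurate and consistent with the paper's argument.
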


\begin{proof}
Let $P$ be a group of order $p_i^{\alpha_i}$. Then $H=P\times\Pi_{j\neq i} \mathbb{Z}_{p_j^{\alpha_j}}$ is a group of order $n$. So $H$ is isomorphic to a subgroup of $G$. Hence $P$ is isomorphic to a subgroup of $P_i$.
\end{proof}

\begin{question}\label{hrd103}
In the above situation, when is $P_i$ a minimal $p_i^{\alpha_i}$-cover group?
\end{question}

\begin{remark}\label{hrd5}
If $\mathcal{F}$ is a set of soluble $\pi$-groups, then every soluble minimal $\F$-cover is a $\pi$-group, by Hall's theorem. Moreover, if $\mathcal{F}$ is a set of $\pi$-groups, then every minimum $\F$-cover $G$ satisfies $O_{\pi'}(G)=1$.
\end{remark}

\section{Two cyclic groups of prime order} \label{s:twocyclic}

In this section, we examine minimal covers for $\{\Z_q,\Z_r\}$, where $q$ and $r$ are distinct primes. In particular, we will prove Theorem~\ref{t:twocyclic}, which determines when there are finitely many such covers.

Our first result handles the generic case. 

\begin{prop} \label{p:qr}
Let $q$ and $r$ be distinct odd primes with at least one strictly greater than $5$. Then there are infinitely many minimal $\{\Z_q,\Z_r\}$-covers.
\end{prop}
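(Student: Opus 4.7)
The plan is to produce infinitely many minimal $\{\Z_q,\Z_r\}$-covers from the family of simple groups $\PSL_2(p)$, as $p$ ranges over suitably chosen primes.

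By Dirichlet's theorem on primes in arithmetic progressions together with the Chinese Remainder Theorem (applicable because $\gcd(q,r) = 1$), there are infinitely many primes $p$ satisfying $p \equiv 1 \pmod{q}$ and $p \equiv -1 \pmod{r}$; I restrict to those with $p > 5$, still an infinite set. Such a $p$ automatically lies outside $\{q,r\}$. Since $q,r$ are odd, $q \mid (p-1)/2$ and $r \mid (p+1)/2$, so the cyclic subgroups of $G := \PSL_2(p)$ of orders $(p-1)/2$ and $(p+1)/2$ contain copies of $\Z_q$ and $\Z_r$ respectively. Hence $G$ is a $\{\Z_q,\Z_r\}$-cover.

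To see that $G$ is \emph{minimal}, I invoke Dickson's classification of subgroups of $\PSL_2(p)$: every proper subgroup $H$ is cyclic, a $p$-group, contained in a Borel $\Z_p \rtimes \Z_{(p-1)/2}$, dihedral of order dividing $p \pm 1$, or isomorphic to $A_4$, $S_4$, or $A_5$. The Sylow $q$- and Sylow $r$-subgroups of $G$ are cyclic (sitting inside cyclic tori), so $H$ contains both $\Z_q$ and $\Z_r$ if and only if $qr \mid |H|$. Because $q,r$ are odd and $\gcd(p-1,p+1) = 2$, we have $r \nmid p-1$ and $q \nmid p+1$, which precludes $qr \mid |H|$ for $H$ cyclic, a $p$-group, a subgroup of the Borel, or dihedral. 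The only remaining possibility is $H \cong A_4, S_4,$ or $A_5$; but $qr \mid 60$ forces $\{q,r\} = \{3,5\}$, which is excluded by the hypothesis that $\max(q,r) > 5$. Thus no proper subgroup of $G$ is a cover, so $G$ is minimal. Since distinct primes $p \geq 5$ give nonisomorphic $\PSL_2(p)$ (they have distinct orders), the infinite supply of primes from Dirichlet yields infinitely many pairwise nonisomorphic minimal $\{\Z_q,\Z_r\}$-covers.

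The only delicate step is the case analysis for the exceptional subgroups $A_4, S_4, A_5$: the hypothesis that at least one of $q,r$ exceeds $5$ is precisely what prevents a subgroup of order dividing $60$ from witnessing a proper cover, and excluding $\{q,r\} = \{3,5\}$ is essentially where the hypothesis is used. As an alternative endgame, once a single such $\PSL_2(p)$ has been established as a minimal cover, one could instead appeal to Proposition~\ref{piFcovers} applied to $\pi = \{q,r\}$ (noting that the prime $p \notin \pi$ divides $|G|$) to generate infinitely many minimal covers as Frattini extensions.
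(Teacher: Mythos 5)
Your proposal is correct and follows essentially the same route as the paper: the same family $\PSL_2(p)$ for primes $p\equiv 1\pmod q$, $p\equiv -1\pmod r$ supplied by Dirichlet, with minimality checked against the classification of subgroups of $\PSL_2(p)$. Your treatment of the exceptional subgroups $A_4$, $S_4$, $A_5$ (via $qr\mid 60$ forcing $\{q,r\}=\{3,5\}$) is a slightly more symmetric version of the paper's observation that $r\ge 7$ does not divide $|A_5|$, but the argument is the same in substance.
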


\begin{proof}
Without loss of generality we may assume that $q<r$, so $r\geq 7$. By Dirichlet's theorem, there are infinitely many primes $p$ satisfying
$p\equiv1$ (mod~$q$) and $p\equiv-1$ (mod~$r$). We claim that, for any such
prime $p$, the group $G=\PSL_2(p)$ is a minimal $\F$-cover, where
$\F=\{\Z_q,\Z_r\}$. To see this, we consult the list of subgroups of $\PSL_2(p)$ given in Suzuki~\cite[Theorem~3.6.25]{suzuki}. The subgroup $A_5$ has order not divisible by $r\geq 7$, and $r\nmid p(p-1)$, so the only subgroup of order divisible by $r$ is dihedral of order $p+1$. But this cannot be divisible by $q\mid (p-1)$, so no maximal subgroup of $G$ has order divisible by $qr$.
\end{proof}

\begin{remark} \label{r:qr}
Since $\PSL_2(p)$ is simple, we see that the minimal covers exhibited in the proof of Proposition~\ref{p:qr} are strongly minimal.
\end{remark}

To handle the remaining pairs of primes, we first prove a general result, which reduces our study to insoluble groups.

\begin{prop} \label{p:qr2}
Let $G$ be a minimal $\{\Z_q,\Z_r\}$-cover. One of the following holds:
\begin{enumerate}
\item\label{twocyclic:solcase} $G$ is soluble, has an elementary abelian normal subgroup $N$, and $G/N$
is cyclic. Either $N$ is a $q$-group and $|G/N|=r$, or $N$ is an $r$-group and
$|G/N|=q$.  There are three such groups, namely $\Z_{qr}$, $\Z_q^a:\Z_r$ and $\Z_r^b:\Z_q$, where $a$ is the multiplicative order of $q$ modulo $r$ and $b$ is the multiplicative order of $r$ modulo $q$.
\item $G$ is not soluble and has a unique maximal normal subgroup $N$. (This
includes the case where $G$ is simple.) The group $N$ is equal to $\Phi(G)$
(and in particular $N$ is nilpotent), and the quotient $G/N$ is a nonabelian
simple group that is also a minimal $\{\Z_q,\Z_r\}$-cover. Moreover, $|N|$ is
coprime to $qr$, but if $|N|\ne1$, it is not coprime to $|G/N|$.
\end{enumerate}
\end{prop}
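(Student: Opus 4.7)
The plan is to split on whether $G$ is soluble.

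\emph{Soluble case.} By Remark~\ref{hrd5}, $G$ is a $\{q,r\}$-group, so its Fitting subgroup factorises as $F(G) = O_q(G) \times O_r(G)$. If both factors are nontrivial then $\Z_{qr} \leq F(G)$ and minimality forces $G = \Z_{qr}$; otherwise, say $O_r(G) = 1$ and $N := O_q(G) \neq 1$. For any $y \in G$ of order $r$, the subgroup $N\langle y \rangle$ is an $\F$-cover, so minimality gives $G = N\langle y \rangle$, and since $N \cap \langle y \rangle = 1$, this is a semidirect product $G = N \rtimes \langle y \rangle$. Re-applying minimality to any minimal normal subgroup of $G$ inside $N$ shows first that $N$ itself is a minimal normal subgroup of $G$ (hence elementary abelian) and second that $N$ is irreducible as an $\mathbb{F}_q[\langle y \rangle]$-module. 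The irreducible $\mathbb{F}_q[\Z_r]$-modules are the trivial one (giving $G = \Z_{qr}$) and a unique faithful class of dimension $a = o_r(q)$ (giving $G = \Z_q^a:\Z_r$); swapping the roles of $q$ and $r$ yields $\Z_r^b:\Z_q$.

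\emph{Insoluble case; Step 1.} I claim every maximal normal subgroup $M$ of $G$ satisfies $\gcd(|M|, qr) = 1$. If not, say $q \mid |M|$; then for any $y \in G$ of order $r$ the subgroup $M\langle y\rangle$ is an $\F$-cover, so minimality forces $M\langle y\rangle = G$, whence $G/M \cong \Z_r$ (and $r \nmid |M|$, else $M$ itself would be an $\F$-cover). A Frattini argument on a Sylow $q$-subgroup $Q$ of $M$ now produces $y \in N_G(Q)$ of order $r$, and the $\F$-cover $Q\langle y\rangle$ must equal $G$ by minimality, forcing $Q = M$. But then $G$ has order $q^a r$, and Burnside's $p^a q^b$ theorem makes $G$ soluble, a contradiction. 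The symmetric argument rules out $r \mid |M|$, and since $G/M$ is simple with $qr \mid |G/M|$, it is nonabelian simple.

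\emph{Insoluble case; Steps 2--5.} Uniqueness of the maximal normal subgroup $N$ is immediate from Step 1: two distinct $M_1, M_2$ would give $M_1 M_2 = G$, contradicting $qr \mid |G|$ since both factors have order coprime to $qr$. For $N = \Phi(G)$, the inclusion $\Phi(G) \leq N$ is the Frattini property ($N\Phi(G) = G \Rightarrow N = G$), while the reverse follows because if some maximal subgroup $M$ avoided $N$ then $MN = G$ would make $M$ surject onto the nonabelian simple $G/N$ with $qr \mid |G/N|$, giving a proper $\F$-cover $M$. Checking that $G/N$ is a minimal $\F$-cover is routine: proper subgroups of $G/N$ lift to proper subgroups of $G$ containing $N$, and since $|N|$ is coprime to $qr$, failure to be an $\F$-cover survives the quotient. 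Finally, if $N \neq 1$ and $\gcd(|N|, |G/N|) = 1$, then Schur-Zassenhaus provides a complement $H \cong G/N$, a proper $\F$-cover, contradicting minimality.

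The main obstacle is Step 1: ruling out the innocuous-looking possibility that a maximal normal subgroup has order divisible by $q$ or $r$. Everything else then flows from the knowledge that $G/M$ is nonabelian simple. The essential tools are the Frattini argument applied to a Sylow $q$-subgroup of $M$, combined with Burnside's $p^aq^b$ theorem, which converts the local structure into a solubility contradiction.
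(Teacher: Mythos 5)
Your proof is correct and follows essentially the same route as the paper's: reduce the soluble case to an irreducible module for $\Z_r$ over $\mathbb{F}_q$, and in the insoluble case show that a maximal normal subgroup has order coprime to $qr$ by producing an element of order $r$ normalising a Sylow $q$-subgroup of it. Your Step~1 phrases this via the Frattini argument plus Burnside's $p^aq^b$ theorem, where the paper counts Sylow $q$-subgroups of $N$ (a set of size prime to $r$, so the $r$-element fixes one) and observes that $\langle P,x\rangle$ is then a proper soluble cover; these are the same idea, and your uniqueness argument ($M_1M_2=G$ has order coprime to $qr$) is a clean equivalent of the paper's. The one imprecision is the assertion of ``a unique faithful class'' of irreducible $\mathbb{F}_q[\Z_r]$-modules: in general there are $(r-1)/a$ isomorphism classes of faithful irreducibles of dimension $a=o_r(q)$ (e.g.\ two classes for $q=2$, $r=7$), and one must argue separately --- as the paper does, via the transitive action of $\Aut(\Z_r)$ on the nontrivial irreducibles --- that they all yield the same group $\Z_q^a{:}\Z_r$ up to isomorphism. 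That is exactly the point needed to justify ``there are three such groups,'' so you should add a line for it.
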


\begin{proof}
Suppose that $G$ is soluble, so by Hall's theorem there exists a
Hall $\{q,r\}$-subgroup. Thus by minimality $G$ is a $\{q,r\}$-group. Let $N$
be a minimal normal subgroup of $G$, which without loss of generality we may
assume to be an elementary abelian $q$-group. If $x$ has order $r$ in $G$ then
$\langle N,x\rangle$ is a $\{\Z_q,\Z_r\}$-cover, so by minimality
$G=\langle N,x\rangle$.

To complete the proof of \ref{twocyclic:solcase} we need to determine $a$ and $b$. By minimality $N$ is an irreducible module for $\Z_r$ over $\mathbb F_q$. If it is the trivial module we obtain $\Z_{qr}$, so we may assume that it is non-trivial. Since $\Aut(\Z_r)$ acts transitively on the set of isomorphism classes of
non-trivial modules over an algebraically closed field (of characteristic different from $r$), we see that $\Aut(\Z_r)$ acts transitively on all non-trivial irreducible modules over any field, in particular $\mathbb F_q$, and this implies there is a unique group $(\Z_q)^a: \Z_r$ up to isomorphism. So it suffices to determine $a$: but if $d$ is the multiplicative order of $q$ modulo $r$ then $\Z_r$ is a subgroup of $\GL_d(q)$ (since the cyclotomic polynomial $\Phi_d(q)$ divides $|\GL_d(q)|$, and $r\mid \Phi_d(q)$). It is also not a subgroup of any smaller linear group, so $d=a$. This proves \ref{twocyclic:solcase}.

Thus we may assume that $G$ is not soluble. Let $N$ be any maximal normal
subgroup. By minimality of $G$, at least one of $q$ and $r$ does not divide
$|N|$. Suppose exactly one does, so $q\mid|N|$ without loss of generality. If
$x\in G$ has order $r$, then $x$ acts on the
Sylow $q$-subgroups of $N$. The number of these is prime to $r$, so $x$
normalizes one, say $P$. Thus $\langle P,x\rangle$ is a soluble
$\{q,r\}$-group contained in $G$, contradicting minimality.

Thus each of $q$ and $r$ does not divide $|N|$. In particular, $G/N$ must be
nonabelian simple. If $G/N$ is not a minimal $\{\Z_q,\Z_r\}$-cover then we
may choose a proper subgroup $M/N$ of $G/N$ with order divisible by $qr$,
whence its preimage $M$ also has order divisible by $qr$, contradicting
minimality of $G$.

To prove uniqueness of $N$, let $M$ be any other maximal normal subgroup
of $G$, and note the above discussion also holds for $M$. Clearly $MN=G$, so
$G/N\cong M/M\cap N$. But then $|M/M\cap N|$ and so $|M|$ are divisible by $qr$,
and this contradicts the minimality of $G$.

If $H$ is any maximal subgroup of $G$, then $NH<G$, since otherwise
$H/(H\cap N)\cong G/N$ and so $H$ is a cover. But this implies that $N\le H$.
So $N\le\Phi(G)$, and we must have equality as $G/N$ is simple. Finally, if 
$\gcd(|N|,|G/N|)=1$ then the Schur--Zassenhaus theorem implies
that $N$ has a complement, which is a cover. 
\end{proof}

\begin{corollary} \label{c:qr2}
If there are no nonabelian simple minimal $\{\Z_q,\Z_r\}$-covers, then all minimal $\{\Z_q,\Z_r\}$-covers are soluble.
\end{corollary}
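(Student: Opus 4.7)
The plan is simply to invoke Proposition~\ref{p:qr2} contrapositively. Suppose, aiming at a contradiction, that there exists an insoluble minimal $\{\Z_q,\Z_r\}$-cover $G$. Then we are in case (b) of Proposition~\ref{p:qr2}, which provides a maximal normal subgroup $N=\Phi(G)\leqn G$ with the property that $G/N$ is a nonabelian simple group and is itself a minimal $\{\Z_q,\Z_r\}$-cover. This directly contradicts the hypothesis that no minimal $\{\Z_q,\Z_r\}$-cover is nonabelian simple, so every minimal cover must fall in case (a), i.e.\ must be soluble.

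There is no substantive obstacle here: all the work has already been done in the dichotomy established by Proposition~\ref{p:qr2}. The corollary is essentially a one-line consequence, and the only thing to spell out is the extraction of the simple quotient $G/\Phi(G)$ as the witnessing nonabelian simple minimal cover.
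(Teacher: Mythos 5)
Your proof is correct and is exactly the argument the paper intends: the corollary is stated without proof precisely because it follows immediately from case (b) of Proposition~\ref{p:qr2}, which hands you the nonabelian simple minimal cover $G/\Phi(G)$ whenever an insoluble minimal cover exists. Nothing further is needed.
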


We next consider $\{q,r\}=\{3,5\}$. Here there is a unique simple minimal cover, but infinitely many insoluble covers.

\begin{prop} \label{p:35} \quad
\begin{enumerate}
\item The only simple minimal $\{\Z_3,\Z_5\}$-cover is the alternating group $A_5$.
\item There are infinitely many insoluble $\{\Z_3,\Z_5\}$-covers.
\end{enumerate}
\end{prop}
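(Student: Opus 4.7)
The plan for (a) is to reduce the classification to Theorem~\ref{t:60}. First I would record that $A_5$ itself is a minimal $\{\Z_3,\Z_5\}$-cover: its subgroup orders lie in $\{1,2,3,4,5,6,10,12,60\}$ (as is clear from the maximal subgroups $A_4$, $D_{10}$, and $S_3$), so no proper subgroup has order divisible by $15$. For the converse, let $G$ be any simple minimal $\{\Z_3,\Z_5\}$-cover. Then $15$ divides $|G|$, and since $G$ cannot be cyclic of prime order it is nonabelian simple; a nonabelian simple group has order divisible by $4$ (if a Sylow $2$-subgroup had order $2$ it would be central in its normalizer, and Burnside's normal $p$-complement theorem would force a normal subgroup of index $2$). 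Hence $60$ divides $|G|$, and Theorem~\ref{t:60} gives $A_5\leq G$. Because $A_5$ is already a $\{\Z_3,\Z_5\}$-cover, minimality of $G$ forces $G=A_5$.

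For (b), the plan is to invoke Proposition~\ref{piFcovers} with $\pi=\{3,5\}$ and $\F=\{\Z_3,\Z_5\}$. Part~(a) supplies $A_5$ as a minimal $\F$-cover whose order $60$ is divisible by the prime $2\notin\pi$, which is exactly the hypothesis needed. The proposition then produces an infinite sequence of distinct minimal $\F$-covers, each obtained from its predecessor (starting with $A_5$) as a Frattini extension by an elementary abelian $2$-group. Because at every stage the top quotient remains $A_5$, every group in the sequence is insoluble, giving infinitely many insoluble minimal $\{\Z_3,\Z_5\}$-covers.

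The only nontrivial input used above that has not already been proved in the excerpt is Theorem~\ref{t:60}; once that is granted, both halves of the proposition reduce to a few lines of assembly. Accordingly the \emph{real} obstacle in this section is not Proposition~\ref{p:35} itself but Theorem~\ref{t:60}, which must be proved separately (in Section~\ref{s:cauchy}) and which presumably requires the Classification of Finite Simple Groups to run through simple groups of order divisible by $60$ and locate a copy of $A_5$.
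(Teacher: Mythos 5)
Your part (b) is exactly the paper's argument (Proposition~\ref{piFcovers} with $\pi=\{3,5\}$, seeded by $A_5$), and the observation that every group in the resulting tower is insoluble because its top quotient is $A_5$ is correct. The problem is part (a): your reduction is circular relative to the paper. In the paper, Theorem~\ref{t:60} is \emph{not} proved independently in Section~\ref{s:cauchy}; it is stated immediately after Proposition~\ref{p:35} as ``an immediate consequence of Proposition~\ref{p:35}(a)'', and the introduction's pointer for Theorem~\ref{t:60} is precisely Proposition~\ref{p:35}(a). So by deferring all the work to Theorem~\ref{t:60}, you have deferred it to a statement whose only proof is the proposition you are trying to establish. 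The substantive content of (a) --- a CFSG case analysis showing that every nonabelian finite simple group of order divisible by $15$, other than $A_5$, has a \emph{proper} subgroup of order divisible by $15$ (running through alternating groups, sporadic groups, and the various families of Lie type, with the delicate cases being $\PSL_2(p^a)$, $\PSL_3$, $\PSU_3$, etc.\ when $p\equiv\pm2\pmod 5$) --- is exactly what is missing from your write-up. If you want to argue via Theorem~\ref{t:60}, you must supply a genuinely independent proof of it, which amounts to the same case analysis.

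Two smaller points. First, even granting Theorem~\ref{t:60}, your step ``a nonabelian simple group has order divisible by $4$'' only rules out a Sylow $2$-subgroup of order exactly $2$ via Burnside; to exclude odd order you also need the Feit--Thompson theorem (or some substitute). Second, be aware that deducing (a) from Theorem~\ref{t:60} in the form stated would still require a word: a simple group of order divisible by $15$ but not by $4$ is not covered by Theorem~\ref{t:60} at all, so the divisibility-by-$4$ step is essential to your reduction, not a pleasantry.
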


\begin{proof} (a) Let $G$ be a simple group, and we use the classification of finite simple groups. First, it is easy that $A_5$ actually \emph{is} a minimal $\{\Z_3,\Z_5\}$-cover so we assume that $G$ is not $A_5$. We will show that $G$ always possesses a proper subgroup of order divisible by $15$, or $|G|$ is not divisible by $15$.

If $G$ is an alternating group then clearly $G$ contains $A_5$ and we are done. If $G$ is a sporadic group then all maximal subgroups of $G$ are known and we may check the \emph{$\mathbb{ATLAS}$ of Finite Groups}
\cite{ATLAS} (except for the Monster, where all maximal subgroups have only recently been identified~\cite{dlp}; however, $2\cdot B$ is a subgroup of $M$, and we are done).

Thus let $G$ be a group of Lie type. Note that $G$ cannot be a Suzuki group ($3$ does not divide their orders) or a small Ree group ($5$ does not divide their orders). For the large Ree groups, they all contain the Tits group, which contains a maximal subgroup $A_6.2^2$ (see \cite[p.~74]{ATLAS} for example).

Thus we may assume that $G$ is a simple Chevalley or Steinberg group, in characteristic $p$. Let $G=G(p^a)$ be the group; in all cases there is a Levi subgroup of $G$ that is either $\PSL_2(p^a)$ or $\SL_2(p^a)$. If $p^a\equiv 0,\pm 1$ (mod~$5$) then $15$ divides the order of $\PSL_2(p^a)$, and we are done, unless
$G=\PSL(2,p^a)$. But, if $p=5$, then $G$ contains $\PSL_2(5)$, whereas if
$p\equiv\pm1$ (mod~$5$) then $G$ contains $\PSL(2,p)$, which contains $A_5$
because of the congruence condition.

So we may suppose that this is not the case. In particular, $p\equiv\pm2$ (mod~$5$) and $a$ is odd.
These conditions imply that $p$ has order $4$ modulo $5$. A standard fact about cyclotomic polynomials is that if $5\mid \Phi_d(p^a)$ then $d$ is a power of $5$ times $4$. Thus if $5\mid |G|$, then $\Phi_4(p^a)=p^{2a}+1$ (or $\Phi_{20},\Phi_{100}$, etc.) divides $|G(p^a)|$, so this excludes $\PSL_2$, $\PSL_3$, $\PSU_3$, ${}^3D_4$ and $G_2$ from consideration. The groups $\PSp_4(p^a)$ contain groups of the form $\PSL_2(p^{2a})$, so these cannot be minimal covers. Since both $\PSL_4(p^a)$ and $\PSU_4(p^a)$ contain $\PSp_4(p^a)$, these cannot be minimal covers either.

The remaining groups we have not considered are symplectic and orthogonal groups (which all contain $\PSp_4(p^a)=\Omega_5(p^a)$) or exceptional groups $F_4(p^a)$, $E_6(p^a)$, $E_7(p^a)$ and $E_8(p^a)$ (each of which contains the previous one and the first one contains $\PSp_4(p^a)$) and ${}^2E_6(p^a)$ (which contains $F_4(p^a)$).

This completes the list of groups of Lie type, so $A_5$ is the only simple minimal $\{\Z_3,\Z_5\}$-cover, as claimed.

\medskip

(b) This follows from (a) and Proposition~\ref{piFcovers} applies to $\pi=\{3,5\}$.
\end{proof}

Note that Theorem~\ref{t:60} is an immediate consequence of Proposition~\ref{p:35}(a).

\medspace

We now consider the case where $q=2$. Let us outline our strategy in this case. Suppose that $G$ is a simple minimal
$\{\Z_2,\Z_r\}$-cover. If $M$ is any maximal subgroup of $G$ then $M$ is not
divisible by $2r$, so any maximal subgroup of order divisible by $r$ has
odd order. Of course, if $R$ denotes a Sylow $r$-subgroup of $G$ then $N_G(R)$
must be contained in a maximal subgroup of $G$, which therefore must have odd
order. Fortunately, there are very few odd-order maximal subgroups of simple
groups, and they are enumerated in \cite[Table 2]{ls1991}. The following lemma, which makes use of \cite{ls1991}, is stated in greater generality than required here so that it can be used in the next section also.

\begin{lemma} \label{l:liebeck-saxl}
Let $A$ be a finite almost simple group with socle $S$. Let $p$ be a Fermat prime. Let $M$ be a maximal subgroup of $A$ such that $N_S(P) \leq M$ where $P$ is a Sylow $p$-subgroup of $S$. Then $M$ has even order, or $p=3$ and $S=\PSL_2(3^a)$ for an odd integer $a > 1$.
\end{lemma}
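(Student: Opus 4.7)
The plan is to argue the contrapositive: assume $M$ has odd order, and deduce that $p = 3$ and $S = \PSL_2(3^a)$ with $a$ odd and $a > 1$. The only input beyond standard facts is the classification of odd-order maximal subgroups of finite almost simple groups from \cite[Table~2]{ls1991}, which supplies a short explicit list of possible pairs $(A, M)$. Note that the hypothesis $N_S(P) \le M$ forces $|P|$ to divide $|M|$, which is consistent with $|M|$ odd because $p$ is an odd Fermat prime.

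For each entry of the Liebeck--Saxl list I would identify the socle $S$, determine which Fermat primes $p$ divide $|S|$, and for each such $p$ examine the structure of $N_S(P)$ for $P$ a Sylow $p$-subgroup of $S$. The goal is to show that, outside the asserted exceptional family, $N_S(P)$ always contains an involution (and so cannot fit inside an odd-order $M$).

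The exceptional family is forced by the following calculation. In $S = \PSL_2(3^a)$ with $a > 1$ odd, the Sylow $3$-subgroup $P$ is elementary abelian of order $3^a$ and $N_S(P)$ is the Borel subgroup, of order $3^a(3^a-1)/2$; since $a$ odd gives $3^a \equiv 3 \pmod{4}$, the quantity $(3^a - 1)/2$ is odd, so $|N_S(P)|$ is odd. An odd-order maximal subgroup $M$ of $A$ containing this Borel does exist precisely when $A$ sits inside $S \rtimes \langle \varphi \rangle$ for $\varphi$ an odd-order field automorphism, which is one of the configurations recorded in \cite[Table~2]{ls1991}.

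For the remaining entries in the list, the elimination is a direct arithmetic check. The key observation is that any Fermat prime $p \ge 5$ satisfies $p \equiv 1 \pmod{4}$, so a Borel or torus normaliser in a group of Lie type in characteristic $p$ acquires a factor of $4$ (from $q - 1$ or $q + 1$) and hence has even order; similarly, for Fermat primes dividing the order of a sporadic or small exceptional group appearing in the list, explicit Sylow normaliser orders (available from \cite{ATLAS}) always contain an involution. The main obstacle is just to make the case analysis exhaustive and to handle cross-characteristic Sylow normalisers in the Lie type entries carefully; once this is done the lemma follows.
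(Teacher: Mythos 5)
Your overall skeleton matches the paper's: assume $M$ has odd order, invoke the Liebeck--Saxl classification of odd-order maximal subgroups of almost simple groups, and identify the Borel subgroup of $\PSL_2(3^a)$ ($a>1$ odd) as the surviving configuration. Your treatment of that exceptional row is correct. But the substantive work in this lemma is eliminating the rows $S=\PSL_d(q)$ and $S=\PSU_d(q)$ with $M\cap S$ a Singer-type torus normaliser $\bigl(\tfrac{1}{(d,q\mp1)}\tfrac{q^d\mp1}{q\mp1}\bigr){:}d$ for $d$ an odd prime, and your proposal does not do this. Your ``key observation'' --- that a Fermat prime $p\ge5$ is $\equiv1\pmod{4}$, so $q\pm1$ picks up a factor of $4$ in characteristic $p$ --- only bears on the case where $p$ is the defining characteristic of $S$, which is essentially the $\PSL_2(q)$ Borel row you already handled. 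In the Singer rows $p$ is a cross-characteristic prime, such normalisers genuinely can have odd order (e.g.\ $7{:}3$ in $\PSL_3(2)$), and you explicitly defer exactly this case (``handle cross-characteristic Sylow normalisers \dots\ carefully''). That deferral is the gap: the paper closes it by observing that $N_S(P)\le M$ together with the theory of Sylow tori forces $p$ to be a primitive prime divisor of $q^d-1$ (resp.\ $q^{2d}-1$), hence $o_p(q)=d$ (resp.\ $2d$), hence $d\mid p-1$; since $p-1$ is a power of $2$ and $d$ is odd, this is a contradiction. Nothing in your proposal supplies this (or any substitute) argument for those rows.

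A secondary issue is that your stated target, showing ``$N_S(P)$ always contains an involution,'' is not the right invariant: in the Singer configurations $N_S(P)$ can itself have odd order when $o_p(q)=d$, so parity of $N_S(P)$ cannot be what rules them out --- it is the arithmetic incompatibility of $o_p(q)\in\{d,2d\}$ with $p$ being Fermat. You would also need to dispose explicitly of the $A_n$ row ($M\cap S=n{:}\tfrac{n-1}{2}$ with $n\equiv3\pmod{4}$ prime, $n\ge5$), where containing a full Sylow $p$-subgroup forces $p=n$, impossible for a Fermat prime, and of the sporadic rows, where the listed subgroups are far too small to contain a Sylow $3$- or $5$-subgroup of the ambient group. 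Those checks are routine, but the $\PSL_d/\PSU_d$ elimination is the heart of the lemma and is missing from your argument.
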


\begin{proof}
Suppose that $M$ has odd order. According to \cite[Table~2]{ls1991}, Table~\ref{tab:odd_order} gives the only possibilities for $S$ and $M \cap S$ where $S$ is a finite nonabelian simple group and $M$ is an odd-order maximal subgroup of an almost simple group with socle $S$. (Not all examples in the table necessarily arise. In particular, at the time of writing \cite{ls1991}, it was not known if the subgroups in the final two rows were maximal subgroups of the Monster. They have since been proved not to be~\cite{hw1,hw2}.) In the first row, we require $p = n$ but we must have $n \geq 5$ and $n \equiv 3 \pmod{4}$, which is impossible since $p$ is a Fermat prime. In the second row, we require $q = p^a$ but we must have $q \geq 4$ and $q \equiv 3 \pmod{4}$, which, since $p$ is a Fermat prime, forces $p = 3$ and $a > 1$ to be odd; this gives the exception in the statement. For a contradiction, suppose that one of the remaining rows holds. The restriction that $p$ is a Fermat prime means that we must have one of the following:
\begin{itemize}
\setlength\itemsep{0pt}
\item[1.] $S = \mathrm{PSL}_d(q)$ and $M = \left(\frac{1}{(d,q-1)} \frac{q^d-1}{q-1} \right) {:} d$ for an odd prime $d$
\item[2.] $S = \mathrm{PSU}_d(q)$ and $M = \left(\frac{1}{(d,q+1)} \frac{q^d+1}{q+1} \right) {:} d$ for an odd prime $d$.
\end{itemize}
Since $M \cap S$ contains $N_S(P)$, we may apply the theory of Sylow tori (see \cite[Theorem~25.19]{MalleTesterman11} for instance). In Case~1, this implies that $p$ is a primitive prime divisor of $q^d-1$, so $o_p(q) = d$, while in Case~2, this implies that $p$ is a primitive prime divisor of $q^{2d}-1$, so $o_p(q) = 2d$. In both cases, $d$ divides $p-1$, which is impossible since $d$ is odd and $p$ is a Fermat prime. 
\end{proof}

\begin{table}[htbp]
\[
\begin{array}{lll}
\hline
S                 & M \cap S                                                & \text{conditions}                       \\
\hline  
A_n               & n{:}\left(\frac{n-1}{2}\right)                          & \text{$n \equiv 3 \pmod{4}$ prime}       \\
\mathrm{PSL}_2(q) & q{:}\left(\frac{q-1}{2}\right)                          & \text{$q \equiv 3 \pmod{4}$ prime power} \\
\mathrm{PSL}_d(q) & \left(\frac{1}{(d,q-1)} \frac{q^d-1}{q-1} \right) {:} d & \text{$d$ odd prime}                    \\
\mathrm{PSU}_d(q) & \left(\frac{1}{(d,q+1)} \frac{q^d+1}{q+1} \right) {:} d & \text{$d$ odd prime}                    \\
\mathrm{M}_{23}   & 23{:}11 & \\
\mathrm{Th}       & 31{:}15 & \\
\mathbb{B}        & 47{:}23 & \\
\mathbb{M}        & 59{:}29 & \\
\mathbb{M}        & 71{:}35 & \\
\hline
\end{array}
\]
\caption{Possibilities for an odd-order maximal subgroup $M$ of an almost simple group with socle $S$} \label{tab:odd_order}
\end{table}

\begin{prop} \label{p:2p}
Let $r$ be an odd prime. If $r$ is a Fermat prime then there are no simple minimal $\{\Z_2,\Z_r\}$-covers, and if $r$ is not a Fermat prime then there are infinitely many simple minimal $\{\Z_2,\Z_r\}$-covers.
\end{prop}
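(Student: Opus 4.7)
I treat the two halves of the statement in turn, both relying on the classification of odd-order maximal subgroups of almost simple groups (Table~\ref{tab:odd_order}) via Lemma~\ref{l:liebeck-saxl}.

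\textbf{Fermat case.} Suppose for contradiction that $G$ is a simple minimal $\{\Z_2,\Z_r\}$-cover with $r$ a Fermat prime. By the discussion preceding Lemma~\ref{l:liebeck-saxl}, the maximal subgroup of $G$ containing $N_G(R)$ (for $R \in \mathrm{Syl}_r(G)$) must have odd order. Applying Lemma~\ref{l:liebeck-saxl} with $A = S = G$ and $p = r$ leaves only the exceptional possibility $r = 3$ and $G \cong \PSL_2(3^a)$ with $a > 1$ odd. I rule this out via Dickson's classification: $\PSL_2(3) \cong A_4$ embeds in $\PSL_2(3^a)$ as a proper subgroup of order $12 = 2 \cdot 6$, which is already a $\{\Z_2,\Z_3\}$-cover, contradicting the minimality of $G$.

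\textbf{Non-Fermat case.} Fix an odd prime divisor $d$ of $r - 1$ (which exists because $r - 1$ is not a power of $2$); note $d < r$. By Dirichlet's theorem there are infinitely many primes $q$ with $o_r(q) = d$, so that $r$ is a primitive prime divisor of $q^d - 1$. For each such $q$, set $G_q = \PSL_d(q)$: this is a nonabelian simple group with $2r \mid |G_q|$, and its Singer normalizer
\[
M_q \;=\; \frac{1}{(d,q-1)} \cdot \frac{q^d - 1}{q - 1} \rtimes \Z_d
\]
is a maximal $\mathcal{C}_3$-subgroup containing a Sylow $r$-subgroup. The order $|M_q|$ is odd, because $d$ is odd and $(q^d - 1)/(q - 1) = 1 + q + \cdots + q^{d-1}$ is a sum of $d$ terms of matching parity.

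The key step is to show that for all but finitely many $q$ in this Dirichlet family $G_q$ is in fact minimal; distinct primes $q$ then yield nonisomorphic simple minimal covers. Via Aschbacher's theorem on maximal subgroups of $\PSL_d(q)$, the geometric classes $\mathcal{C}_1$, $\mathcal{C}_2$, $\mathcal{C}_4$, $\mathcal{C}_7$, $\mathcal{C}_8$ contribute subgroups whose orders involve only factors $q^i - 1$ for $i < d$ and are hence coprime to $r$ by the primitive prime divisor property; $\mathcal{C}_5$ is empty when $q$ is prime; and $\mathcal{C}_6$-subgroups have order bounded in terms of $d$ alone, coprime to $r$ because $r > d$ and $r \nmid d^2 - 1$. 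The main obstacle is the class $\mathcal{S}$ of almost simple irreducible subgroups: an $\mathcal{S}$-subgroup with an $r$-element must arise from an almost simple group of order divisible by $r$ admitting a faithful $d$-dimensional projective representation over $\mathbb{F}_q$. For fixed $r$ and $d$ only finitely many such almost simple candidates exist, and each embeds into $\PSL_d(q)$ for only finitely many primes $q$ by representation-realizability constraints, so excluding these bad primes from the Dirichlet family preserves the infinite family.
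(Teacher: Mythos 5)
Your Fermat-prime half is correct and is essentially the paper's argument: reduce to an odd-order maximal overgroup of $N_G(R)$, apply Lemma~\ref{l:liebeck-saxl}, and eliminate the residual case $\PSL_2(3^a)$ via the subgroup $A_4$. Your non-Fermat half also begins exactly as the paper does, with the family $\PSL_d(q)$ for $d$ an odd prime divisor of $r-1$ and $o_r(q)=d$, and your treatment of the geometric Aschbacher classes is essentially sound (for $d$ prime the only $\mathcal{C}_3$-subgroup is the Singer normalizer, which you should say explicitly). Where you diverge is in excluding the irreducible almost simple subgroups: the paper replaces the Aschbacher analysis by the Guralnick--Penttila--Praeger--Saxl classification of subgroups of $\GL_d(q)$ containing a $\mathrm{ppd}(d,q;d)$-element, and works through their Examples 2.1--2.9 case by case.

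That class-$\mathcal{S}$ step is where your proof has a genuine gap. The claim that a fixed almost simple candidate ``embeds into $\PSL_d(q)$ for only finitely many primes $q$'' is false: a fixed group with a $d$-dimensional projective representation in characteristic $0$ embeds into $\mathrm{PGL}_d(q)$ for every prime $q$ not dividing its order for which $\mathbb{F}_q$ contains the character field of the representation, and that is a positive-density set of primes determined by congruences, not a finite set. Concretely, for $r=7$ one is forced to take $d=3$, and $\PSL_2(7)$ --- of order $168=2^3\cdot 3\cdot 7$, hence an even-order subgroup of order divisible by $r$ --- embeds in $\PSL_3(q)$ for every prime $q\equiv 1,2,4\pmod 7$ with $q\neq 2,7$ (the Klein-quartic representation, realizable over $\mathbb{F}_q$ exactly when $\sqrt{-7}\in\mathbb{F}_q$). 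Since $o_7(q)=3$ forces $q\equiv 2,4\pmod 7$, this affects \emph{every} member of your Dirichlet family, so ``excluding finitely many bad primes'' cannot rescue the argument; the same phenomenon occurs for $r=11$, $d=5$ with $\PSL_2(11)\le\PSL_5(q)$. The $\mathcal{S}$-class therefore has to be confronted via an explicit classification such as \cite{gpps}, and these low-degree cross-characteristic embeddings are precisely the entries that deserve the closest scrutiny in the check of \cite[Example~2.9]{gpps}.
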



\begin{proof}
First assume that $r$ is a Fermat prime. For a contradiction, suppose that $S$ is a simple minimal $\{\Z_2,\Z_r\}$-cover. As indicated above, let $R$ be a Sylow $r$-subgroup of $S$ and let $M$ be a maximal subgroup of $S$ containing $N_S(R)$. Since $r$ divides $|M|$ we must have that $|M|$ is odd. By Lemma~\ref{l:liebeck-saxl}, we deduce that $r=3$ and $S = \PSL_2(3^a)$ for odd $a > 1$. However, $\PSL_2(3) \cong A_4$ is a subgroup of $S = \PSL_2(3^a)$ that has order divisible by $2$ and $3$. This establishes that $S$ is not a minimal $\{\Z_2,\Z_r\}$-cover.

%

Now assume that $r$ is not a Fermat prime, and let $d$ be an odd prime divisor of $r-1$. Let $p$ be a prime at least $5$ that has order $d$ modulo $r$, of which there are infinitely many options by Dirichlet's theorem on primes in arithmetic progressions, and let $G=\PSL_d(p)$. Then $r$ divides $(p^d-1)/(p-1)$ and so divides $|G|$, and if $R$ is a Sylow $r$-subgroup of $G$ then $R$ is cyclic and $N_G(R)$ has odd order. We claim that $N_G(R)$ is the only maximal subgroup of $G$ containing $x$, a non-trivial element of $R$.

To see this we use the work of Guralnick--Penttila--Praeger--Saxl on ppd-elements \cite{gpps}. We have set things up so that $x$ is a ppd element, and in the notation of \cite{gpps}, $x$ is a $\mathrm{ppd}(d,p;d)$-element. The possible subgroups of $\GL_d(p)$ containing $x$ are enumerated in \cite[Examples 2.1--2.9]{gpps}, and almost all of them can immediately be ignored since $d$ is a prime and $d$ appears twice in the phrase `$\mathrm{ppd}(d,p;d)$'. (The second $d$ in this is $e$ in \cite{gpps}, so in that paper we have that $d=e$ is prime.) We check each set of examples from \cite{gpps} in turn.
\begin{itemize}
\item Example 2.1 are classical groups, and since $d=e$ is an odd prime, and we are over a prime field, none of these applies.
\item Examples 2.2 and 2.3 do not apply since they require $e<d$.
\item Examples 2.4 and 2.5 do not apply as $d$ is an odd prime.
\item Example 2.6(a) does not apply since it requires $r-1=d$.
\item For Examples 2.6(b) and 2.6(c), there are three tables of examples; we need that $d=e$ is a prime, and the only option is $3\cdot A_7\leq \SL_3(25)$ with $r=7$. This is an example, but we required that $G$ be over a prime field, so this may be excluded.
\item For Example 2.7 there is a table of examples, and we find the options $G=M_{11}$, $(d,p,r)=(5,3,11)$ and $G=M_{23}$, $M_{24}$, $(d,p,r)=(11,2,23)$. Since we have (not coincidentally) chosen $p\geq 5$, these are not examples.
\item In Example 2.8, $e$ is always even, but in our case $e=d$ is odd.
\item In Example 2.9, there are no examples with $d$ a prime and $d=e$.
\end{itemize}

We thus find that $G=\PSL_d(p)$ is a minimal $\{\Z_2,\Z_r\}$-cover. As there are infinitely many options for $p$, we find infinitely many such groups.
\end{proof}

\begin{remark} \label{r:2p}
Using \cite{gpps} we could actually determine \emph{exactly} which simple groups are minimal covers of $\{\Z_2,\Z_r\}$. 
\end{remark}

We therefore have the following corollary.

\begin{corollary} \label{c:fermat}
If $r$ is a Fermat prime then the set $\{\Z_2,\Z_r\}$ has just three minimal covers, namely $\Z_{2r}$, $D_{2r}$ and $(\Z_2)^{2a}:\Z_r$ where $r=2^a+1$. 
\end{corollary}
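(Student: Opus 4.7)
The plan is to combine Proposition~\ref{p:qr2} with Proposition~\ref{p:2p} to eliminate all but the three claimed covers, then verify the computation of the exponents.

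First I would invoke Proposition~\ref{p:qr2} with $q=2$: any minimal $\{\Z_2,\Z_r\}$-cover $G$ is either (a) soluble of the form $\Z_{2r}$, $\Z_2^a:\Z_r$, or $\Z_r^b:\Z_2$, where $a$ is the multiplicative order of $2$ modulo $r$ and $b$ is the multiplicative order of $r$ modulo $2$; or (b) insoluble with $G/\Phi(G)$ a nonabelian simple minimal $\{\Z_2,\Z_r\}$-cover.

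The next step is to rule out case (b) entirely using the Fermat hypothesis. Since $r$ is a Fermat prime, Proposition~\ref{p:2p} asserts that there are no simple minimal $\{\Z_2,\Z_r\}$-covers. Hence $G/\Phi(G)$ cannot exist, so case (b) is vacuous and every minimal cover is one of the three soluble types listed in (a).

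It then remains to identify $a$ and $b$. Because $r = 2^a+1$, we have $2^a \equiv -1 \pmod{r}$ and thus $2^{2a} \equiv 1 \pmod{r}$; since $2^a \not\equiv 1$, the order of $2$ modulo $r$ is exactly $2a$, so the second soluble cover becomes $\Z_2^{2a}:\Z_r$. On the other side, $r$ is odd, so $r \equiv 1 \pmod{2}$, giving $b=1$ and $\Z_r^b:\Z_2 = \Z_r:\Z_2 = D_{2r}$. Together with $\Z_{2r}$, these are exactly the three groups claimed.

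There is no real obstacle here: once Propositions~\ref{p:qr2} and \ref{p:2p} are in hand, the argument is almost entirely bookkeeping. The only subtlety worth checking explicitly is that the three soluble candidates from Proposition~\ref{p:qr2}(a) really are pairwise nonisomorphic minimal covers (they have distinct orders $2r$, $2r$, $2^{2a}r$, and $\Z_{2r}\not\cong D_{2r}$ for $r$ odd), which is immediate.
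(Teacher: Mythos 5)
Your argument is correct and follows essentially the same route as the paper: Proposition~\ref{p:qr2} reduces everything to the soluble case once Proposition~\ref{p:2p} rules out simple minimal covers (the paper packages this reduction as Corollary~\ref{c:qr2}), and the three soluble groups are then read off from Proposition~\ref{p:qr2}(a), with Cauchy's theorem confirming they really are covers. One tiny point in your bookkeeping: from $2^{2a}\equiv 1$ and $2^{a}\not\equiv 1 \pmod{r}$ alone you only get that the order of $2$ divides $2a$ but not $a$; you need the additional fact that $a$ is a power of $2$ (true for a Fermat prime $r=2^a+1$) to conclude the order is exactly $2a$.
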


\begin{proof}
Combining Corollary~\ref{c:qr2} with Proposition~\ref{p:2p}, we see that there are no insoluble covers. Proposition~\ref{p:qr2}(a) shows that the three groups in the statement are the only possible soluble covers, and Cauchy's theorem establishes that they actually are covers.
\end{proof}

Propositions~\ref{p:qr}, \ref{p:35}, \ref{p:2p} and Corollary~\ref{c:fermat} together give Theorem~\ref{t:twocyclic}.

\begin{remark} \label{r:six-cfsg}
Our proofs use the Classification of Finite Simple Groups; however, the fact that there are only three $\{\Z_2,\Z_3\}$-covers (namely $\Z_6$, $S_3$ and $A_4$) can be proved without the Classification. Three papers~\cite{fss,gordon,pod} in 1977 independently determined the finite simple groups with no elements of order~$6$; and it is straightforward to show that, apart from the Suzuki groups (whose orders are not divisible by~$6$), they all contain subgroups isomorphic to
$S_3$ or $A_4$.
\end{remark}

\section{Cauchy numbers} \label{s:cauchy}

Building on the previous section, we now determine the Cauchy numbers, thus proving Theorem~\ref{t:cauchy}. Recall that $n$ is a \emph{Cauchy number} if there is a finite set $\W$ of groups of order divisible by $n$ such that if $n$ divides the order of $G$, then $G$ has a subgroup isomorphic to a group in $\W$. The main result of the previous section, Theorem~\ref{t:twocyclic}, asserts that if $n$ is the product of two distinct primes, then $n$ is a Cauchy number if and only if $n$ is twice a Fermat prime. 

We will use the
term ``$n$-group'' (for positive integer $n$) for a group whose order is
divisible by $n$, and ``$n$-witness'' for an $n$-group which has no
proper subgroups which are $n$-groups. So $n$ is a Cauchy number if and only
if the set of $n$-witnesses (up to isomorphism) is finite.

\begin{remark}
It was pointed out to us by Michael Kinyon that there is already a sequence of ``Cauchy numbers'', arising in the Laplace summation formula, see~\cite{msv}. ``Cauchy numbers'' also occur in the theory of compressible flow in continuum mechanics. However, we think that these topics are sufficiently different that no confusion will ensue.
\end{remark}

\begin{prop}
A divisor of a Cauchy number is a Cauchy number.
\label{p:divisor}
\end{prop}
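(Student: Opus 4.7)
The plan is to show that every $m$-witness embeds in some $n$-witness, so that once the $n$-witnesses have bounded order, so do the $m$-witnesses. Write $n = mk$, and let $N$ be an upper bound on the order of an $n$-witness; such an $N$ exists because $n$ is a Cauchy number. Given an arbitrary $m$-witness $G$, the first step is to embed $G$ in an $n$-group by forming $H = G \times \Z_k$, which has order $k|G|$ and is therefore divisible by $n = mk$. The group $H$ must then contain at least one $n$-witness $W$: simply take any subgroup of $H$ of minimum order among those whose order is divisible by $n$.

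The core of the argument is to show that $K := W \cap (G \times 1)$, viewed as a subgroup of $G$, has order divisible by $m$. Once this is done, the fact that $G$ is an $m$-witness forces $K = G$, whence $G$ embeds in $W$ and hence $|G| \leq |W| \leq N$. Uniform boundedness of $|G|$ over all $m$-witnesses then yields finitely many $m$-witnesses up to isomorphism, which is exactly what is required.

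The technical heart of the proof is the divisibility $m \mid |K|$, which I would handle by a prime-by-prime comparison of $p$-adic valuations. Projection of $W$ onto the second factor has kernel $K$, so $W/K$ embeds into $\Z_k$ and in particular $|W/K|$ divides $k$. Combining this with $n \mid |W|$, for every prime $p$ one obtains
\[
\nu_p(|K|) \;=\; \nu_p(|W|) - \nu_p(|W/K|) \;\geq\; \nu_p(n) - \nu_p(k) \;=\; \nu_p(m),
\]
where $\nu_p$ denotes the $p$-adic valuation; this gives $m \mid |K|$ and completes the argument. The only real risk of subtlety would be if $W/K$ failed to embed in $\Z_k$, but this is immediate since $K$ is defined precisely as the kernel of the projection, so the plan should go through cleanly.
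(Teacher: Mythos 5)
Your proof is correct, but it takes a genuinely different route from the paper's. The paper reduces to the case $n=pm$ with $p$ prime, shows that for each $m$-witness $G$ either $G$ itself or $G\times\Z_p$ is a $pm$-witness, and then uses the Krull--Schmidt theorem to check that the resulting map $\W_m\to\W_{pm}$ is injective, giving $|\W_m|\le|\W_{pm}|$. You instead work with an arbitrary divisor $m$ of $n=mk$ in one step: you locate some $n$-witness $W$ inside $G\times\Z_k$, and the valuation computation $\nu_p(|K|)=\nu_p(|W|)-\nu_p(|W/K|)\ge\nu_p(n)-\nu_p(k)=\nu_p(m)$ (using $n\mid|W|$ and $|W/K|\mid k$) forces $K=W\cap(G\times 1)$ to be all of $G$, so $|G|\le|W|\le N$. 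Each step checks out: $W$ chosen of minimal order among subgroups of $G\times\Z_k$ with order divisible by $n$ is indeed an $n$-witness, and bounded order gives finitely many isomorphism classes. Your argument avoids Krull--Schmidt and the induction over prime factors, and yields a clean bound on the orders of $m$-witnesses by the maximal order of an $n$-witness; the paper's version buys the sharper counting statement $|\W_m|\le|\W_{pm}|$, which yours does not directly give (you bound orders rather than inject witness sets). Both rely on the equivalence, recorded in the paper's Remark on witnesses, between being a Cauchy number and having finitely many witnesses.
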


\begin{proof}
It suffices to show that if $m$ is a positive integer and $p$ a prime such
that $pm$ is a Cauchy number, then $m$ is a Cauchy number. Let $G$ be an
$m$-witness. We claim that either $G$ or $G\times\Z_p$ is a
$pm$-witness. Suppose that $G$ is not a $pm$-group. Then $G\times\Z_p$ is a
$pm$-group. Let $H$ be a proper subgroup of $G\times\Z_p$ whose order is
divisible by $pm$, and consider the projection $\pi$ to the first factor of the
direct product. If $\pi$ restricted to $H$ is not a monomorphism, then
$H=K\times\Z_p$ for some $K<G$; then $K$ is not an $m$-group, so $H$ is not a
$pm$-group. Otherwise, $H\le G$, contrary to assumption. On the other hand,
if $G$ is a $pm$-group, it is clearly minimal.

Now let $\mathcal{W}_m$ be the set of $m$-witnesses, and define a map $F$
from $\mathcal{W}_m$ to $\mathcal{W}_{mp}$ given by
\[F(G)=\begin{cases}
G & \text{if $G$ is a $pm$-group,}\\
G\times\Z_p & \text{otherwise.}
\end{cases}\]
We claim that $F$ is one-to-one. By the Krull--Schmidt theorem, we need to show
that we cannot have $F(G_1)=G_1\times\Z_p=G_2=F(G_2)$. But if this holds, then
$G_1$ and $G_2$ are both $m$-witnesses, contradicting $G_1<G_2$.

Therefore, $|\mathcal{W}_m|\le|\mathcal{W}_{pm}|$, whence $|\mathcal{W}_m|$ is finite
and $m$ is a Cauchy number. 
\end{proof}

Our aim is to determine the Cauchy numbers. First we show that every positive integer satisfies the analogous condition when we restrict to soluble groups.

\begin{lemma}
If $G$ is an $n$-witness with normal subgroup $N$, then $G/N$ is an
$n/\gcd(n,|N|)$-witness.
\end{lemma}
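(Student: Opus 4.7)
The plan is to verify directly the two defining properties for $G/N$ to be an $m$-witness, where I set $d=\gcd(n,|N|)$ and $m=n/d$; both reduce to arithmetic with Lagrange's theorem.

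First I would check that $m$ divides $|G/N|$. This is a prime-by-prime computation: for each prime $p$, writing $v_p$ for the $p$-adic valuation, one has $v_p(m)=\max(0,v_p(n)-v_p(|N|))$, and from $n\mid|G|=|N|\cdot|G/N|$ we get $v_p(n)\leq v_p(|N|)+v_p(|G/N|)$, so $v_p(m)\leq v_p(|G/N|)$ for every prime $p$.

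For the minimality condition, I would take an arbitrary proper subgroup of $G/N$, necessarily of the form $H/N$ with $N\trianglelefteq H<G$, and assume for contradiction that $m$ divides $|H/N|$. The key observation is that since $d\mid|N|$, the identity $|H|=|N|\cdot|H/N|$ gives $n=dm \mid |N|\cdot|H/N|=|H|$. Thus $H$ is a proper subgroup of $G$ whose order is divisible by $n$, contradicting the assumption that $G$ is an $n$-witness.

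There is no real obstacle here; the whole statement is an arithmetic identity wrapped in Lagrange's theorem. The only thing to keep straight is that $d$, being defined as $\gcd(n,|N|)$, automatically divides $|N|$, which is precisely what lets one convert divisibility by $m$ of $|H/N|$ into divisibility by $n$ of $|H|$ in the minimality step.
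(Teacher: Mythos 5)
Your proof is correct and follows essentially the same route as the paper's: the paper likewise asserts that $n/\gcd(n,|N|)$ divides $|G/N|$ and then derives a contradiction from a proper subgroup $H$ with $N\le H<G$ and $n\mid |H|$. You merely spell out the first divisibility claim via $p$-adic valuations, which the paper leaves implicit.
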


\begin{proof}
$|G/N|$ is divisible by $n/\gcd(n,|N|)$. If it is not minimal with this
property, let $H$ be a proper subgroup of $G$ containing $N$ with $H/N$
divisible by $n/\gcd(n,|N|)$; then $n$ divides $|H|$, contrary to minimalty
of $G$.
\end{proof}

\begin{prop}
For any natural number $n$, there are only finitely many finite soluble groups
which are $n$-witnesses.
\label{p:soluble}
\end{prop}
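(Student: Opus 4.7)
The plan is to proceed by induction on $n$, showing in fact that the order of any soluble $n$-witness is bounded by a function of $n$. Since there are only finitely many finite groups of any given order, this yields finitely many soluble $n$-witnesses up to isomorphism. The base case $n=1$ is immediate, as the only $1$-witness is the trivial group.

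For the inductive step, let $G$ be a soluble $n$-witness with $n \geq 2$ and set $\pi = \pi(n)$. First I would reduce to the case that $G$ is a $\pi$-group: by Hall's theorem, $G$ contains a Hall $\pi$-subgroup $H$, and $|H|$ is divisible by $|G|_p$ for every $p \in \pi$, hence by $n$; so minimality of $G$ forces $G = H$. Next, take a minimal normal subgroup $N$ of $G$. Since $G$ is soluble, $N$ is an elementary abelian $p$-group for some $p \in \pi$. By the preceding lemma, $G/N$ is an $n'$-witness with $n' = n/\gcd(n,|N|)$, and since $|N| > 1$ we have $n' < n$. The induction hypothesis applied to $G/N$ then yields a bound on $|G/N|$ depending only on $n$.

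It remains to bound $|N|$, and this is the main obstacle, since a priori a minimal normal subgroup can be much larger than the quotient acting on it. The key observation is that $N$ is an irreducible $\mathbb{F}_p H$-module, where $H = G/C_G(N)$. Because $N$ is abelian we have $C_G(N) \supseteq N$, so $|H| \leq |G/N|$; and since any irreducible $\mathbb{F}_p H$-module is cyclic (generated by any nonzero element), it is a quotient of the regular module $\mathbb{F}_p H$, which has $\mathbb{F}_p$-dimension $|H|$. Therefore
\[
|N| \leq p^{|H|} \leq p^{|G/N|},
\]
and combining with the inductive bound on $|G/N|$ gives a bound on $|G| = |N|\cdot|G/N|$ in terms of $n$ alone. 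This closes the induction.

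I expect the setup (Hall-theorem reduction, choice of minimal normal subgroup, and invocation of the preceding lemma) to be routine; the only genuinely substantive point is the module-theoretic bound on $|N|$, which is what prevents the quotient from "hiding" unbounded complexity in a minimal normal subgroup.
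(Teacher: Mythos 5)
Your proof is correct and follows essentially the same route as the paper's: induction on $n$, a minimal normal subgroup $N$ (elementary abelian $p$-group), the preceding lemma applied to $G/N$, and the bound $|N|\le p^{|G/N|}$. The paper obtains that last bound by noting that $N$ is generated by any single $G$-orbit of non-identity elements (an orbit of size at most $|G/N|$), which is just the concrete form of your regular-module argument, and your explicit Hall-subgroup reduction makes precise a point the paper leaves implicit (that $p$ divides $n$), so the two arguments coincide in substance.
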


\begin{proof}
The proof is by induction on $n$; the induction begins by noting that the
theorem is true when $n$ is a prime power. So assume that $n$ is not a 
prime power.

Let $N$ be a minimal normal subgroup of $G$. Then $N$ is an elementary abelian
$p$-group, for some $p$ dividing $n$. So $1<|N|<|G|$. By the inductive
hypothesis, $G/N$ is a $n/\gcd(n,|N|)$-witness, so its order is bounded. Also,
as $N$ is a minimal normal subgroup, it is generated by any $G/N$-orbit on
non-identity elements, so $|N|\le p^{|G/N|}$ is also bounded, and we are done.
\end{proof}

\begin{proof}[Proof of Theorem~\ref{t:cauchy}]
Combining Proposition~\ref{p:divisor} with Theorem~\ref{t:twocyclic}, we deduce that if $n$ is a Cauchy number which is not a prime power, then 
$n=2^bp^a$, where $p$ is a Fermat prime. The next step is to show that either
$n=6$, or $n=2p^a$ where $a\ge1$ and $p$ is a Fermat prime greater than $3$.
For this we have to exclude $n=18$ and $n=4p$ where $p$ is a Fermat prime (the
cases $p=3$ and $p=5$ require separate arguments).
\begin{enumerate}
\item For $n=12$, let $f$ be an odd prime, let
$q = 2^f$ and let $G = \PSL_2(q)$. Then $G$ is a $12$-witness. The maximal
subgroups of $G$ are, up to conjugacy, $D_{2(q+1)}$, $D_{2(q-1)}$, $2^f:(q-1)$.
Since $f$ is odd, we know that $q\equiv2$~(mod~$3$), so $3$ does not divide
$2(q-1)$ or $2^f:(q-1)$; and $4$ does not divide $2(q+1)$.
\item For $n=18$, again let $f$ be an odd prime, and take $q=3^f$ and
$G=\PSL_2(q)$. Then $G$ is an $18$-witness. For its only subgroups of order
divisible by $3$ are $\PSL_2(3)\cong D_6$ and $q:(q-1)/2$; and $(q-1)/2$ is odd.
\item For $n=20$, note that $A_5$ is a $20$-witness, and so 
Proposition~\ref{piFcovers} shows that $20$ is not a Cauchy number. 
\item Let $n=4p$, with $p$ a prime greater than $5$.
By the Chinese Remainder Theorem, there is an arithmetic progression of numbers
$r$ such that $r \equiv 1 \pmod{4}$ and $r \equiv -1 \pmod{p}$. Therefore, by
Dirichlet's Theorem, there are infinitely many such prime numbers $r$. Fix
such a prime $r$. Let $G = \PSL_2(r)$, which has order $\frac{1}{2}(r-1)r(r+1)$.
We claim that $G$ is an $n$-witness. First note that $n$ divides $|G|$, since
$4$ divides $r-1$ and $p$ divides $\frac{1}{2}(r+1)$. We now claim that $n$
divides the order of no proper subgroup of $G$. For a contradiction, suppose
that $H$ is a maximal subgroup of order divisible by $n$. Since $p > 5$, we
know that $H \neq A_5$. Since $p > 2$ and $p$ divides $r+1$, we know that $p$
does not divide $r-1$. Therefore, consulting the list of maximal subgroups of
$\PSL_2(r)$, we see that $H \cong D_{r+1}$, but $4$ does not divide $r+1$ since $r \equiv 1 \pmod{4}$. This contradicts the fact that $n$ divides $|H|$.
\end{enumerate}

Now we have proved that a number not of the form described in the theorem is
not a Cauchy number. So from now on we assume that $n=2p^a$, where $p$ is a
Fermat prime greater than $3$ and $a$ a positive integer, and have to show
that $n$ is a Cauchy number. By Proposition~\ref{p:soluble} it suffices to show that all $n$-witnesses are soluble. 

Assume that there exists an insoluble $n$-witness, and let $G$ be one with $a$
chosen minimal, and subject to this $|G|$ minimal. Let $N$ be a proper nontrivial normal subgroup of $G$. Since $G/N$ is a $n/\gcd(n,|N|)$-witness, the
minimality of $n$ and $G$ shows that $G/N$ is soluble.

We claim that $G/N$ is a $p$-group. To see this, first note that $N$ is
insoluble since $G/N$ is soluble, so, in particular, $|N|$ is even. Fix
$N \leq P \leq G$ such that $P/N$ is a Sylow $p$-subgroup of $G/N$. Then
$n = 2p^a$ divides $|P|$. Since $G$ is a $n$-witness, we deduce that $G = P$.
Therefore, $G/N = P/N$ is a $p$-group.

Since every proper quotient of $G$ is a $p$-group, $O^p(G)$ is the unique
minimal normal subgroup of $G$. Write $O^p(G) = S^k$ where $S$ is a nonabelian
simple group and $k \geq 1$.

Let $P$ be a Sylow $p$-subgroup of $S$ and let $H$ be a maximal subgroup of $G$
containing $N_G(P^k)$. Let $\widehat{P}$ be a Sylow $p$-subgroup of $G$
containing $P^k$. Then $\widehat{P} \leq N_G(P^k) \leq H$ since
$P^k = \widehat{P} \cap S^k \leqn \widehat{P}$. Therefore, $p^a$ divides $|H|$.
In particular, $|H|$ is odd since $G$ is a $2p^a$-witness, and $S^k \not\leq H$
since $H$ is a proper subgroup of $G$. In particular, the action of $G$ on
$G/H$ is faithful and primitive and the point stabiliser $H$ has odd order.
Therefore, the main corollary in \cite{ls1991} restricts the possibilities for
$G$ and $H$. Namely, if we identify $S$ with the first factor of $S^k$, then
$H \cap S = M \cap S$ where $M$ is an odd-order maximal subgroup of an almost
simple group with socle $S$. However, $N_S(P) \leq N_G(P^k) \leq H$, so
$N_S(P) \leq M \cap S$, which means that $|H|$ is even, by
Lemma~\ref{l:liebeck-saxl}. This contradiction completes the proof. 
\end{proof}

We conclude this section by commenting on the set of $n$-witnesses when $n$ is a Cauchy number. If $n = p^a$, then the $n$-witnesses are the groups of order $n$, and if $n = 2p$, for a Fermat prime $p$, then Corollary~\ref{c:fermat} implies that the $n$-witnesses are $\Z_{2p}$, $D_{2p}$, $\Z_2^{2a} : \Z_p$ where $p=2^a+1$. The following result handles the remaining case.

\begin{prop} \label{p:witness}
Let $n = 2p^a$ for a Fermat prime $p > 3$ and $a > 1$. 
\begin{enumerate}
\item Let $P$ be a group of order $p^a$ and let $V$ is an irreducible $\mathbb{F}_2P$-module. Then $V{:}P$ is a $n$-witness.
\item Any $n$-witness has order dividing $2^dp^a$ where $d$ is the maximum degree of an representation of group of order $p^a$ over $\mathbb{F}_2$.
\end{enumerate}
\end{prop}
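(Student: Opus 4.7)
For part (a), the plan is to observe that $|G|=|V|\cdot p^a$ is divisible by $n=2p^a$, and then verify minimality. Given $H\le G$ with $n\mid |H|$, I would note that $V$ is the unique Sylow $2$-subgroup of $G$, since it is normal of $2$-power order and odd index $p^a$. Hence evenness of $|H|$ gives $H\cap V\neq\{1\}$, and $p^a\mid|H|$ forces $H$ to contain a Sylow $p$-subgroup $P^g$ of $G$. Conjugation by $g$ is an automorphism of $G$ that preserves $V$ and sends $\mathbb F_2 P$-submodules of $V$ to $\mathbb F_2 P^g$-submodules; irreducibility therefore transfers, and the nonzero $P^g$-submodule $H\cap V$ must equal $V$. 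Combined with $P^g\le H$, this gives $H\supseteq VP^g=G$.

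For part (b), I would induct on $a$, with the base case $a=1$ supplied by Corollary~\ref{c:fermat} (its three $n$-witnesses each have order dividing $2^{d(1)}p$). Let $G$ be an $n$-witness for $a\ge 2$. First, the proof of Theorem~\ref{t:cauchy} shows $G$ is soluble, and then Hall's theorem combined with minimality forces $G$ to be a $\{2,p\}$-group, say $|G|=2^b p^c$ with $b\ge 1$ and $c\ge a$. Let $N$ be a minimal normal subgroup of $G$, which by solubility is elementary abelian of exponent either $2$ or $p$.

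If $N$ is a $2$-group, then for a Sylow $p$-subgroup $P$ of $G$ the subgroup $NP$ has order divisible by $n$, so minimality forces $NP=G$; this makes $T:=N$ the full Sylow $2$-subgroup. The same trick applied to $TP_0$ for any $P_0\le P$ of order $p^a$ forces $|P|=p^a$. Finally, $T$ is an abelian minimal normal subgroup of $G=TP$, hence an irreducible $\mathbb F_2 P$-module of dimension at most $d$, and $|G|=|T|\cdot p^a$ divides $2^d p^a$. If instead $N$ is a $p$-group of order $p^e$, I would first rule out $e\ge a$: in that case the subgroup $NT$ forces $N=P$ (elementary abelian, normal), after which any $PT'$ with even-order $T'<T$ would be a proper cover, forcing $|T|=2$; but then $P$ is an irreducible $\mathbb F_p\mathbb Z_2$-module, so of dimension $1$, contradicting $c\ge a\ge 2$. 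Thus $e<a$, and the lemma preceding Proposition~\ref{p:soluble} shows $G/N$ is a $2p^{a-e}$-witness. By the inductive hypothesis, $|G/N|$ divides $2^{d(a-e)}p^{a-e}$; using the monotonicity $d(a-e)\le d(a)=d$ (obtained by inflating an irreducible $\mathbb F_2 P'$-module of a group $P'$ of order $p^{a-e}$ to $P'\times\mathbb Z_p^e$), we conclude that $|G|=|N|\cdot |G/N|$ divides $p^e\cdot 2^d p^{a-e}=2^d p^a$.

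I expect the main obstacle to be the $p$-group chief subgroup case in (b): the subcase $e\ge a$ must be eliminated using the hypothesis $a>1$, and the subcase $e<a$ requires stepping down $a$ via the quotient $G/N$ while checking the monotonicity of $d(\cdot)$. Part (a) and the $2$-group chief case should be comparatively routine.
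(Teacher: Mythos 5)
Your proof is correct and follows essentially the same route as the paper's: part (a) by checking that no proper subgroup of $V{:}P$ has order divisible by $n$ (the paper phrases this via the maximal subgroups $P$ and $V{:}H$), and part (b) by induction on $a$ using a minimal normal subgroup, split into the $2$-group and $p$-group cases exactly as in the paper, with the $|M|\ge p^a$ subcase eliminated the same way. Your explicit inflation argument for the monotonicity $d(a-e)\le d(a)$ supplies a small detail that the paper's ``the claim holds by induction'' leaves implicit.
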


\begin{proof}
First consider part~(a). Let $G = V{:}P$. Clearly $n = 2p^a$ divides $|G|$. The maximal subgroups of $G$ are $P$, of order $p^a$, and $V{:}H$ where $H$ is a maximal subgroup of $P$, of order $2p^{a-1}$. This proves that $G$ is an $n$-witness.

Now consider part~(b). Corollary~\ref{c:fermat} gives the result when $a=1$. Now assume that $a > 1$ and proceed by induction on $a$. Let $G$ be an $n$-witness. We know that $G$ is soluble and hence a $\{2,p\}$-group, by Hall's theorem. Let $M$ be a minimal normal subgroup of $G$. If $M$ is a $2$-group, then by minimality, $G/M$ must be a group of order $p^a$, and since $M$ is a minimal normal subgroup, $G/M$ must act irreducibly on $M$. Now assume that $M$ is a $p$-group. Suppose that $|M| \geq p^a$. Then by minimality, $|G/M| = 2$. However, $M$ is a minimal normal subgroup, so $G/M$ must act irreducibly on $M$, which forces $|M| = p$, so $a = 1$, contrary to our assumption. Therefore, $|M| < p^a$, so, by minimality, $G/M$ is a $2p^a/|M|$-witness, and the claim holds by induction. 
\end{proof}

\section{Simple groups} \label{s:simple}

Let us now turn to covers of finite simple groups, beginning with the following general result.

\begin{theorem}\label{hrd82}\label{t:composition}
Let $\F$ be a finite set of finite simple groups of size $n$ and $G$ be  a minimum $\mathcal{F}$-cover. Let  $N_0<N_1<\cdots<N_k=G$ be a composition series of $G$ and $\F_i=\{H\in\F:H$ is isomorphic to a subgroup of $N_i/N_{i-1}\}$. Then
\begin{enumerate}
\item $k\leq n$.
\item   $\bigcup_{i=1}^k \F_i=\F$.
\item $ N_i/N_{i-1}$ is a minimum $\F_i$-cover.
\item   $\Pi_{i=1}^k N_i/N_{i-1}$ is a minimum $\mathcal{F}$-cover.
\end{enumerate}
\end{theorem}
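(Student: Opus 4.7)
The plan is to prove (b) first, since it is the key structural observation that enables the rest, and then to derive (d), (c), and (a) by applying a single substitution trick to the direct product $D := \prod_{i=1}^k N_i/N_{i-1}$.

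For (b), I would fix $H \in \mathcal{F}$ and let $i$ be the smallest index with $H \leq N_i$, which exists because $H \leq N_k = G$. Since a composition series is in particular subnormal, $N_{i-1} \trianglelefteq N_i$, so $H \cap N_{i-1}$ is a normal subgroup of $H$. The simplicity of $H$ together with $H \not\leq N_{i-1}$ forces $H \cap N_{i-1} = 1$, which yields an embedding $H \hookrightarrow N_i/N_{i-1}$ and hence $H \in \mathcal{F}_i$. Therefore $\bigcup_i \mathcal{F}_i = \mathcal{F}$.

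Part (b) immediately implies that $D$ is an $\mathcal{F}$-cover, since every $H \in \mathcal{F}$ embeds in some $N_i/N_{i-1}$ and hence in $D$. As $|D| = |G|$ and $G$ is a minimum cover, $D$ is also a minimum $\mathcal{F}$-cover, giving (d). For (c), I would suppose for contradiction that some $\mathcal{F}_i$ admits a cover $M$ with $|M| < |N_i/N_{i-1}|$, and then replace the $i$-th factor of $D$ by $M$: the resulting group is an $\mathcal{F}$-cover by (b) (each $H \in \mathcal{F}_\ell$ is routed to the $\ell$-th factor if $\ell \neq i$, and to $M$ if $\ell = i$) of order strictly less than $|G|$, contradicting minimality.

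For (a), the same trick works by deletion rather than substitution: if $\mathcal{F}_i \subseteq \bigcup_{j \neq i} \mathcal{F}_j$ for some $i$, then $\prod_{j \neq i} N_j/N_{j-1}$ would be an $\mathcal{F}$-cover of smaller order than $|G|$, again contradicting minimality. Hence for each $i$ there exists $H_i \in \mathcal{F}_i \setminus \bigcup_{j \neq i} \mathcal{F}_j$, and these are pairwise distinct elements of $\mathcal{F}$, so $k \leq |\mathcal{F}| = n$. The main conceptual hurdle is (b); once simple subgroups of $G$ are known to embed into composition factors, the substitution and deletion arguments are essentially formal.
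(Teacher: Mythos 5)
Your proposal is correct and takes essentially the same route as the paper: the paper's proof is an induction on $k$ that peels off the top composition factor using exactly your two ingredients, namely that a simple subgroup meeting a normal subgroup trivially embeds in the quotient (this is the paper's assertion that every simple subgroup of $G$ embeds in $N_{k-1}$ or in $G/N_{k-1}$, which you prove in full generality for part (b)), and that substituting a strictly smaller cover for one factor of a direct product contradicts minimality. Your non-inductive organization --- establishing (b) outright and then performing all substitutions and deletions at once inside $\prod_{i} N_i/N_{i-1}$ --- is a clean repackaging of the same argument rather than a genuinely different one.
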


\begin{proof}
The proof is by induction on $k$. For $k=1$ all the statements are trivial. Now assume $k\geq 2$. Let $\F'=\{H\in\F:H$ is isomorphic to a subgroup of $N_{k-1}\}$. Then every simple subgroup of $G$ is isomorphic to a subgroup of $N_{k-1}$ or a subgroup of $G/N_{k-1}$. So $\F=\F'\cup \F_k$.
Since $G$ is a minimum $\mathcal{F}$-cover, $\F'$ and $\F_k$ are nonempty. Also $N_{k-1}$ is an $\mathcal{F}'$-cover and $G/N_{k-1}$ is an $\mathcal{F}_k$-cover. If $N_{k-1}$ is not a minimum $\mathcal{F}'$-cover and $M$ is a minimum $\mathcal{F}'$-cover then $M\times G/N_{k-1}$ is an $\mathcal{F}$-cover whose order is less than $|G|$ which is a contradiction. Similarly $G/N_{k-1}$  is a minimum $\mathcal{F}_k$-cover.   So   by induction $k-1\leq |\F'|\leq n-1$ which implies $k\leq n$. Also by induction  $\bigcup_{i=1}^{k-1} \F_i=\F'$  and $ N_i/N_{i-1}$ is a minimum $\mathcal{F}_i$-cover  for $1\leq i\leq k-1$ and  $\Pi_{i=1}^{k-1} N_i/N_{i-1}$ is a minimum $\mathcal{F}'$-cover. So  $\Pi_{i=1}^k N_i/N_{i-1}$ is a minimum $\mathcal{F}$-cover.
\end{proof}

We now consider Theorem~\ref{t:simple}, which we repeat below.

\begin{thm-simple}
Let $M$ and $N$ be nonabelian finite simple groups. If $G$ is a minimum cover of $\{ M, N \}$ then either $G = M \times N$ or $G$ is simple and $|G| \leq |M| \cdot |N|$.
\end{thm-simple}

In particular, if there is an $\{ M, N \}$-cover of order less than $|M|\cdot|N|$, then any minimum $\{ M, N \}$-cover is simple.

Before proving Theorem~\ref{t:simple}, let us note that both possibilities in the theorem can occur as the following two examples demonstrate. These examples can be verified using the \emph{$\mathbb{ATLAS}$ of Finite Groups} \cite{ATLAS}.

\begin{example} \label{ex:simple_1}
Let $M=A_5$ and $N=\PSL_2(8)$. The orders of these groups are $60$ and $504$.
Their least common multiple is $2520$ and their product is $30240$. The only
simple groups with order divisible by $2520$ and not greater than $30240$ are
$A_7$, $A_8$ and $\PSL_3(4)$; none of these embed $\PSL_2(8)$. By the theorem,
the unique minimum $\{M,N\}$-cover is $M\times N$.
\end{example}

\begin{example} \label{ex:simple_2}
Let $M=A_6$ and $N=\PSL_2(7)$. Their orders are $360$ and $168$, with least
common multiple $2520$. There is a unique simple group of order $2520$,
namely $A_7$, which embeds both $M$ and $N$. Therefore, $A_7$ is the unique minimum
$\{M,N\}$-cover.
\end{example}

\begin{proof}[Proof of Theorem~\ref{t:simple}]
By Theorem~\ref{t:composition}, either $G$ is simple, or it has a composition
series of length~$2$ with composition factors $M$ and $N$.

Suppose, without loss of generality, that $G$ has a normal subgroup
isomorphic to $M$ with quotient isomorphic to $N$. Hence $C_G(M)\lhd G$ and $M\cap C_G(M)=Z(M)=\{1\}$. Each element of $G$ acts
on $M$ by conjugation. A consequence of the Classification of Finite Simple
Groups is that the outer automorphism group of $M$ is soluble. Since $G$ has
no non-trivial soluble quotient, we see that each element of $G$ induces an
inner automorphism of $M$.  Let $g\in G$. Then  there exists $m\in M$ such that for all $x\in M$, we have $gxg^{-1}=mxm^{-1}$. So $m^{-1}g\in C_G(M)$ which implies $g\in MC_G(M)$. Hence   $G=MC_G(M)$.
 Thus $G$
has normal subgroups $M$ and $C_G(M)$ intersecting trivially (and commuting),
so is their direct product.
\end{proof}

\begin{corollary} \label{c:large_and_small}
There is a function $f$ such that, if $\F=\{M,N\}$ where $M$ and $N$ are
nonabelian finite simple groups with $|N|>f(|M|)$, then $M\times N$ is the
unique minimum $\F$-cover.
\end{corollary}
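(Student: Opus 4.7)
The plan is to combine Theorem~\ref{t:simple} with a coset-action bound on the order of any simple cover, taking the specific choice $f(k) = k!$.

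Let $G$ be a minimum $\F$-cover. Theorem~\ref{t:simple} gives the dichotomy: either $G \cong M \times N$, which is the desired conclusion, or $G$ is simple with $|G| \leq |M|\cdot|N|$. I would assume the latter for contradiction. Since $N \leq G$, we have $[G:N] \leq |G|/|N| \leq |M|$. Provided $N$ is a proper subgroup of $G$, I would choose a maximal subgroup $H$ with $N \leq H < G$, so that $[G:H] \leq [G:N] \leq |M|$. Because $G$ is simple and $H$ is proper, the coset action $G \to \mathrm{Sym}(G/H)$ is faithful (its kernel is a normal subgroup of $G$ contained in $H$, hence trivial), yielding an embedding $G \hookrightarrow S_{[G:H]}$ and therefore
\[
|G| \leq [G:H]! \leq |M|!\, .
\]
This contradicts $|N| \leq |G| \leq |M|! = f(|M|) < |N|$.

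Applying the same argument to every minimum cover will then show that any minimum $\F$-cover is isomorphic to $M \times N$, giving uniqueness.

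The main obstacle is the degenerate case $N = G$, which is equivalent to $M \leq N$. Here the coset-action bound breaks down because $N$ has no proper supergroup in $G$, and in fact $N$ itself would be an $\F$-cover of order $|N| < |M|\cdot|N|$, so that $M \times N$ fails to be minimum. I expect that resolving this subtlety, either by an implicit hypothesis that $M$ does not embed in $N$ or by an appropriate enlargement of $f$ that forces this situation to be vacuous, is the delicate point of the proof; the permutation-degree argument sketched above then handles the remaining (generic) case.
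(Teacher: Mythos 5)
Your argument is essentially the paper's: Theorem~\ref{t:simple} reduces everything to the case of a simple minimum cover $G$ with $[G:N]\le|M|$, and the faithful coset action of $G$ on $G/N$ (faithful by simplicity, once $N<G$) bounds the order. The paper applies the action directly to $N$ rather than to a maximal overgroup $H$, observes that $N$ is a point stabiliser in a group of degree at most $|M|$, and so takes $f(k)=(k-1)!$ in place of your $k!$; these are cosmetic differences.

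The one place you go beyond the paper is the degenerate case $N=G$, and you are right to flag it: the paper's proof silently assumes $N<G$ and never addresses it. Moreover, neither of your two suggested fixes via enlarging $f$ can work, because no choice of $f$ makes that case vacuous: for any fixed nonabelian simple $M$ there are simple groups $N$ of arbitrarily large order containing a copy of $M$ (for instance $M=A_5$ and $N=A_n$ for large $n$), and for such a pair $N$ itself is an $\F$-cover of order $|N|<|M|\cdot|N|$, so $M\times N$ is not a minimum cover and the corollary as literally stated fails. The statement therefore requires the implicit hypothesis that $M$ does not embed in $N$ (equivalently, that the minimum cover is not $N$ itself); with that hypothesis in place, your permutation-degree argument, like the paper's, is complete and correct.
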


\begin{proof}
Suppose not, and let $G$ be a minimum $\F$-cover. Then $G$ is simple, and has
a subgroup $N$ with index at most $|M|$. Now $G$ acts faithfully on the
cosets of $N$, and so it is embeddable in the symmetric group of degree
$|M|$, with $N$ as the point stabiliser. So $N$ is embeddable in the symmetric
group of degree $|M|-1$, and $|N|\le(|M|-1)!$.
\end{proof}

We conclude this section by asking: Is it possible for a set of two nonabelian finite simple groups to have two nonisomorphic minimum covers?

In light of Theorem~\ref{t:simple}, if $M$ and $N$ are nonabelian finite simple groups and $\F=\{M,N\}$ has two nonisomorphic minimum covers, then one of the following must hold:
\begin{enumerate}
\setlength\itemsep{0pt}
\item there are two simple groups of the same order (smaller than $|M|\cdot|N|$) which are minimum $\F$-covers; or
\item there is a simple group of order $|M|\cdot|N|$ which is a minimum $\F$-cover.
\end{enumerate}

For (a), with the Classification of Finite Simple Groups, the only pairs of nonisomorphic finite simple groups of the same order are $\{\PSL_3(4),A_8\}$ and $\{\mathrm{PSp}_{2m}(q),\mathrm{P}\Omega_{2m+1}(q)\}$ for $m\ge3$ and $q$ odd. Using the $\mathbb{ATLAS}$~\cite{ATLAS}, we can show that the first pair are not both minimal covers of any pair of simple groups. We suspect that there are no examples for the second pair either.

For (b), the following question arises which is of independent interest.

\begin{question} \label{q:simple_product} 
Classify the triples $(M,N,G)$ of nonabelian finite simple groups such that $|M|\cdot|N|=|G|$.
\end{question}

If $G$ is a finite simple group that has a sharply $t$-transitive action of degree $n$, then $|A_{n-t}| \cdot |G| = |A_n|$. The following result addresses this special case.

\begin{prop} \label{p:simple_product}
Let $G$ be a finite simple group with a sharply $t$-transitive action of degree $n$. Then $A_n$ is a minimum cover of $\{A_{n-t},G\}$ if and only if one of the following holds
\begin{enumerate}
\item $G = \PSL_2(2^f)$ and $(n,t) = (2^f+1,3)$ where $f \geq 3$
\item $G = {\rm M}_{12}$ and $(n,t) = (12,5)$
\end{enumerate}
\end{prop}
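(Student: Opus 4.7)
The plan is to apply Theorem~\ref{t:simple} together with the classification of sharply $t$-transitive simple groups. Since a sharply $t$-transitive action forces $|G| \cdot (n-t)! = n!$, we have $|A_{n-t}| \cdot |G| = |A_n|$, and Theorem~\ref{t:simple} then tells us that $A_n$ is a minimum cover of $\{A_{n-t}, G\}$ if and only if $G \leq A_n$ and no simple group $H$ with $|H| < |A_n|$ contains both $A_{n-t}$ and $G$ as subgroups. This reduces the question to a classification problem together with a minimality check.

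Next I would invoke the classical result (Jordan, with the Mathieu groups completing the story for $t \geq 4$) that the only finite simple groups admitting a sharply $t$-transitive action with $t \geq 2$ are $\PSL_2(2^f)$ for $f \geq 2$ (acting on $2^f+1$ points, $t=3$), ${\rm M}_{11}$ on $11$ points ($t=4$), and ${\rm M}_{12}$ on $12$ points ($t=5$); no simple group is sharply $2$-transitive, since such groups are Frobenius. The excluded case $f=2$ gives $\PSL_2(4) = A_5 = A_n$ with $A_{n-t} = A_2$ trivial, a degenerate situation the hypothesis $f \geq 3$ rules out. To verify $G \leq A_n$ in each remaining case, I would note that for $\PSL_2(2^f)$ with $f \geq 2$ each involution fixes one point and acts as $2^{f-1}$ disjoint transpositions elsewhere (even since $f \geq 2$), while for ${\rm M}_{11}$ and ${\rm M}_{12}$ simplicity forbids a non-trivial homomorphism to $\Z_2$.

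For $G = {\rm M}_{11}$ I would \emph{exhibit} ${\rm M}_{23}$ as a strictly smaller cover: from the $\mathbb{ATLAS}$, ${\rm M}_{23}$ has $A_8$ and ${\rm M}_{11}$ among its maximal subgroups, so $A_7 \leq A_8 \leq {\rm M}_{23}$ and ${\rm M}_{11} \leq {\rm M}_{23}$, while $|{\rm M}_{23}| < |A_{11}|$. Hence $A_{11}$ is not a minimum cover, ruling out this case. For $G = {\rm M}_{12}$ I would rule out every simple group $H$ of order strictly less than $|A_{12}|$ as a cover of $\{A_7, {\rm M}_{12}\}$: such an $H$ has order divisible by $\mathrm{lcm}(|A_7|, |{\rm M}_{12}|) = 665280$; the minimum faithful permutation degree of ${\rm M}_{12}$ is $12$, eliminating $A_9$, $A_{10}$, $A_{11}$; and the $\mathbb{ATLAS}$ confirms that no sporadic, classical, or exceptional simple group in the remaining order range contains ${\rm M}_{12}$.

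For $G = \PSL_2(2^f)$ with $f \geq 3$ the argument splits. When $f = 3$, a direct enumeration of simple groups of order at most $|A_9|$ (using element-order, index, and divisibility arguments to exclude candidates such as $A_7$, $A_8$, $\PSL_3(4)$, ${\rm M}_{11}$, and ${\rm M}_{12}$) leaves only $A_9$ as a simple cover of $\{A_6, \PSL_2(8)\}$. For $f \geq 4$, I would appeal to the classical fact that a finite simple group $H$ containing $A_m$ for $m$ sufficiently large must itself be alternating of degree at least $m$; combined with the minimum faithful permutation degree $2^f+1$ of $\PSL_2(2^f)$, this forces $H = A_k$ with $k \geq 2^f+1$, so $|H| \geq |A_n|$. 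The main obstacle is the detailed case-check for the ${\rm M}_{12}$ and $f=3$ cases, where the conceptual lower-bound argument is too coarse and one must work through $\mathbb{ATLAS}$ data for simple groups of moderate order to rule out any unexpected embedding.
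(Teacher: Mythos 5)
Your reduction and case analysis track the paper's proof closely: both use Theorem~\ref{t:simple} together with $|A_{n-t}|\cdot|G|=|A_n|$ to reduce to showing that no smaller simple group covers the pair, both quote the classification of sharply $t$-transitive simple actions, both eliminate ${\rm M}_{11}$ via ${\rm M}_{23}$, and both treat ${\rm M}_{12}$ and small $f$ by finite checks (the paper does these in \textsc{Magma}). Two smaller issues first: you never dispose of $t=1$, where $G$ acts regularly and embeds in $A_{n-1}$, so that $A_{n-1}$ already covers $\{A_{n-1},G\}$ and $A_n$ is not minimum; and the exclusion of $f=2$ is not a ``hypothesis'' of the proposition --- the condition $f\geq 3$ sits in the conclusion, so the degenerate case $\{A_2,\PSL_2(4)\}$ has to be addressed (or excluded by the convention of the cited classification) rather than assumed away.

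The genuine gap is the ``classical fact'' underpinning your argument for $f\geq 4$. It is false that a finite simple group containing $A_m$ for large $m$ must itself be alternating: $A_m$ acts on its deleted permutation module over $\mathbb{F}_2$, so $A_m$ embeds in the simple group $\PSL_{m-1}(2)$ (and in orthogonal groups of comparable dimension), none of which are alternating. Consequently the step ``this forces $H=A_k$ with $k\geq 2^f+1$'' does not follow, and as written you have no lower bound on $|H|$ when $H$ is classical. The correct substitute, which is what the paper does for $f\geq 5$, is quantitative: sporadic groups are excluded by \cite[Theorem~5.2.9]{KleidmanLiebeck90} since $q-2\geq 30$, exceptional groups by \cite[Theorem~8]{LiebeckSeitz03}, and a classical group containing $A_{q-2}$ must have natural module of dimension at least $q-4$ by \cite[Theorem~5.3.7]{KleidmanLiebeck90}, hence order at least $2^{(q-4)(q-6)/2}$, which exceeds $|A_{q+1}|$; the cases $f\in\{3,4\}$, where these estimates are delicate or fail, are done by computer. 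Your proof needs this order-theoretic replacement (and a separate treatment of $f=4$) to go through.
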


\begin{proof}
First assume that $t = 1$, so $G$ acts regularly on $n$ points. Since $G$ has a faithful action on strictly fewer points, $G$ embeds in $A_{n-1}$. In particular, $A_{n-1}$ is a minimum cover of $\{A_{n-1},G\}$.

Now assume that $t > 1$. Consulting \cite[Theorem~4.11]{Cameron99}, for example, the only sharply $t$-transitive actions of a finite simple group $G$ are:
\begin{itemize}
\setlength\itemsep{0pt}
\item[1.] $G = \PSL_2(2^f)$ and $(n,t) = (2^f+1,3)$ where $f \geq 3$
\item[2.] $G = {\rm M}_{11}$ and $(n,t) = (11,4)$
\item[3.] $G = {\rm M}_{12}$ and $(n,t) = (12,5)$
\end{itemize}

It is straightforward to rule out Case~2: ${\rm M}_{23}$ is a cover of $\{ A_7, {\rm M}_{11} \}$ and $|{\rm M}_{23}| = 10200960 < 19958400 = |A_{11}|$. For Case~3, it is also easy to verify (in \textsc{Magma} \cite{Magma}, say) that $A_{12}$ is a minimum cover of $\{ A_7, {\rm M}_{12} \}$. (For comparison with Case~2, while ${\rm M}_{24}$ is a cover of $\{ A_7, {\rm M}_{12}\}$ we have  $|{\rm M}_{24}| = 244823040 > 239500800 = |A_{12}|$.)

Case~1 remains. Fix $f \geq 3$, write $q=2^f$ and let $G = \PSL_2(q)$. We claim that $A_{q+1}$ is the smallest simple group to embed $A_{q-2}$ and $G$. If $f \in \{ 3, 4 \}$, then this is easily verified in \textsc{Magma} \cite{Magma}, so let us assume that $f \geq 5$. Let $H$ be a simple group embedding both $A_{q-1}$ and $G$. Since $H$ embeds $\PSL_2(q)$, if $H = A_d$, then $d \geq q+1$ (see \cite[Theorem~5.2.2]{KleidmanLiebeck90}). Since $H$ embeds $A_{q-2}$ with $q-2 \geq 30$, we deduce that $H$ is not a sporadic group (see \cite[Theorem~5.2.9]{KleidmanLiebeck90}).  Now assume that $H$ is a classical group. Since $H$ embeds $A_{q-2}$, the dimension of the natural module for $H$ must be at least $q-4$ (see \cite[Theorem~5.3.7]{KleidmanLiebeck90}). Consulting the order formulae for these groups, it is easy to see that this implies that $|H| \geq 2^{(q-4)(q-6)/2}$. This means
\[
\log_2|H| \geq \tfrac{1}{2} (q-4)(q-6) \geq q \log_2(q+1) \geq \log_2 |A_{q+1}|.
\] 
Finally assume that $H$ is an exceptional group of Lie type. In light of the previous cases, consulting the possible maximal subgroups of $H$ \cite[Theorem~8]{LiebeckSeitz03}, we see that $H$ does not embed $A_{q-2}$ with $q-2 \geq 30$. Therefore, in all cases, $|H| \geq |A_{q+1}|$, so $A_{q+1}$ is a minimum cover of $\{A_{q-2},G\}$, as claimed.
\end{proof}


\begin{corollary} \label{c:simple_product}
A set of two nonabelian finite simple groups can have two nonisomorphic minimum covers. For example,
\begin{enumerate}
\item $A_{2^f+1}$ and $A_{2^f-2} \times \PSL_2(2^f)$ are minimum covers of $\{ A_{2^f-2}, \PSL_2(2^f) \}$ whenever $f \geq 3$
\item $A_{12}$ and $A_7 \times {\rm M}_{12}$ are minimum covers of $\{ A_7, {\rm M}_{12} \}$.
\end{enumerate}
\end{corollary}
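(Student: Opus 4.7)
The plan is to derive this as a direct corollary of Proposition~\ref{p:simple_product} together with Proposition~\ref{p:order}(b). Proposition~\ref{p:simple_product} already furnishes the first minimum cover in each part, namely $A_{2^f+1}$ in (a) and $A_{12}$ in (b). What remains is to show that the direct product $A_{n-t}\times G$ is also a minimum $\{A_{n-t},G\}$-cover, and that it is nonisomorphic to $A_n$.

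The key step is a clean order identity. If $G$ acts sharply $t$-transitively on $n$ points, then $|G|=n(n-1)\cdots(n-t+1)=n!/(n-t)!$, and therefore
\[
|A_{n-t}|\cdot|G| = \frac{(n-t)!}{2}\cdot\frac{n!}{(n-t)!} = \frac{n!}{2} = |A_n|.
\]
By Proposition~\ref{p:order}(b), $|A_{n-t}|\cdot|G|$ is an upper bound on the order of a minimum $\{A_{n-t},G\}$-cover, whereas Proposition~\ref{p:simple_product} pins that minimum order down to exactly $|A_n|$. Since $A_{n-t}\times G$ is manifestly an $\{A_{n-t},G\}$-cover and meets this bound, it is itself a minimum cover. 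It is not isomorphic to $A_n$, because $A_n$ is nonabelian simple while $A_{n-t}\times G$ has $A_{n-t}$ as a proper nontrivial normal subgroup, provided $n-t\ge 5$ and $G$ is nontrivial.

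To finish, apply the above with $(G,n,t)=(\PSL_2(2^f),2^f+1,3)$ for $f\ge 3$, using the sharply $3$-transitive action of $\PSL_2(2^f)$ on the projective line (here $n-t=2^f-2\ge 6$, so $A_{n-t}$ is indeed nonabelian simple), and with $(G,n,t)=(\mathrm{M}_{12},12,5)$, using the sharply $5$-transitive action of $\mathrm{M}_{12}$. In both cases the conclusion is immediate. There is no real obstacle here; the argument is essentially bookkeeping on top of Proposition~\ref{p:simple_product}, and the only point of substance is recognising that the sharp $t$-transitivity hypothesis is precisely what makes $|A_{n-t}|\cdot|G|$ coincide with $|A_n|$, so that the two very different-looking groups $A_n$ and $A_{n-t}\times G$ are both forced to be minimum covers.
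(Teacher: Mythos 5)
Your proposal is correct and follows essentially the same route the paper intends: the identity $|A_{n-t}|\cdot|G|=|A_n|$ for a sharply $t$-transitive simple group (stated in the paper just before Proposition~\ref{p:simple_product}) shows the direct product is a cover of the minimum order established by that proposition, and nonisomorphism is immediate since $A_n$ is simple. Nothing is missing.
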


\begin{remark} \label{r:prime_divisors}
In an earlier version of the paper, we asked whether any prime divisor of the order of a minimum cover of a set of finite groups must divide the order of one of the groups in the set. However, this is false, since $M_{23}$ is the unique minimum cover of $\{\PSL_3(4),A_8\}$, two simple groups of the same order $20160$ (this can be seen from the $\mathbb{ATLAS}$~\cite{ATLAS}), but $|M_{23}|$ is divisible by $11$ and $23$, neither of which divide $20160$.
\end{remark}

\section{Groups of prime-power order} \label{s:fp}

In this section we examine sets of $p$-groups, for $p$ prime.

\begin{remark}\label{hrd7}
It is well known that the only groups of order $8$ are $\Z_8, \Z_{4}\times\Z_{2},\Z_{2}\times\Z_{2}\times\Z_{2},D_8,Q_8$. So the only $4$-cover groups of order $8$ are $\Z_{4}\times\Z_{2}$ and $D_8$.
Similarly there are five groups of order $p^3$ for an odd prime $p$ which are $\Z_{p^3}, \Z_{p^2}\times\Z_{p},\Z_{p}\times\Z_{p}\times\Z_{p}$, and the two
non-abelian groups
\begin{eqnarray*}
G_p
&=&\left\{\left[
 \begin{array}{ll}
 a & b\\
 0 & 1
 \end{array}\right]:a\in  1+p\Z_{p^2},b\in\Z_{p^2}\right\},\\
\hbox{and }\Heis(p)
&=&\left\{\left[
 \begin{array}{lll}
 1 & a& b\\
 0 & 1&c\\
 0 & 0& 1
 \end{array}\right]:a,b,c\in \Z_{p}\right\}
\end{eqnarray*}
with exponent $p^2$ and $p$ respectively.
This implies the only $p^2$-cover groups of order $p^3$ are $G_p$ and $\Z_p\times\Z_{p^2}$
\end{remark}

\begin{prop}\label{hrd8}
Let $p$ be a prime number. Then
\begin{enumerate}
\item for $p=2$ the only minimal $4$-cover groups are $\Z_{4}\times\Z_{2}$ and $D_8$;
\item for $p>2$ then  the only minimal $p^2$-cover groups are $\Z_{p^2}\times\Z_{p}$ and $G_p$.
\end{enumerate}
\label{p:p2}
\end{prop}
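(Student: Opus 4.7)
By Proposition~\ref{p:pgroup}, any minimal cover of $\F=\{\Z_{p^2},\Z_p\times\Z_p\}$ is a $p$-group. Remark~\ref{hrd7} already identifies $\Z_{p^2}\times\Z_p$ and $G_p$ (respectively $\Z_4\times\Z_2$ and $D_8$ when $p=2$) as the only $p^2$-covers of order $p^3$, and these are minimal because a proper subgroup has order at most $p^2$ and so cannot contain both of the nonisomorphic groups $\Z_{p^2}$ and $\Z_p\times\Z_p$. It therefore suffices to show that any $p$-group $G$ of order $p^n$ with $n\ge 4$ which is a $p^2$-cover admits a proper subgroup that is also a $p^2$-cover (whence $G$ is not minimal).

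To this end, choose an element $x\in G$ of order $p^2$ and a subgroup $A\cong\Z_p\times\Z_p$ in $G$. Both $\langle x\rangle$ and $A$ have order $p^2<|G|$, hence each is contained in a maximal subgroup; fix such maximals $M\supseteq\langle x\rangle$ and $M'\supseteq A$. If $M$ contains a copy of $\Z_p\times\Z_p$, or $M'$ contains an element of order $p^2$, then $M$ or $M'$ is a proper $p^2$-cover and we are done. Otherwise $M$ has no $\Z_p\times\Z_p$ subgroup and hence a unique subgroup of order $p$; by the classical structure theorem for such $p$-groups, $M$ is cyclic (for $p$ odd) or cyclic or generalized quaternion (for $p=2$). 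Simultaneously, $M'$ has no element of order $p^2$, so $M'$ has exponent $p$.

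Necessarily $M\neq M'$, and since distinct maximal subgroups of a $p$-group multiply to the whole group, $|M\cap M'|=|M|\cdot|M'|/|G|=p^{n-2}\ge p^2$. But $M\cap M'$ lies in $M$ and so is cyclic or generalized quaternion, and lies in $M'$ and so has exponent $p$. A generalized quaternion group has exponent at least $4$, whereas a cyclic group of exponent $p$ has order at most $p$; in either case $|M\cap M'|\le p$, contradicting $|M\cap M'|\ge p^2$. The main technical obstacle is the appeal to the classical structure theorem identifying $p$-groups with a unique subgroup of order $p$; the decisive idea is to play the structural rigidity of $M$ and $M'$ off against the arithmetic constraint $|M\cap M'|=p^{n-2}$ forced by the existence of two distinct maximal subgroups of $G$.
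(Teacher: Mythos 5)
Your proof is correct, but it takes a genuinely different route from the paper's. The paper fixes a \emph{normal} subgroup $N$ of order $p^2$ (which every $p$-group of order at least $p^2$ possesses) and argues by cases on whether $N$ is cyclic or elementary abelian, in the latter case splitting further on $|N\cap K|$ for $K\cong\Z_{p^2}$ and using $\Aut(N)\cong\GL_2(p)$ together with a centrality argument to manufacture a cover of order $p^3$ inside $G$; minimality then forces $|G|=p^3$ and Remark~\ref{hrd7} finishes. You instead show directly that any $p^2$-cover of order at least $p^4$ has a proper maximal subgroup that is again a cover, by playing the rigidity of a maximal subgroup $M$ with no $\Z_p\times\Z_p$ (cyclic or generalized quaternion, via the classical theorem on $p$-groups with a unique subgroup of order $p$) against a maximal subgroup $M'$ of exponent $p$, and deriving a contradiction from $|M\cap M'|=p^{n-2}\ge p^2$. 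Your argument is more uniform (no case split on the type of a normal subgroup) and arguably cleaner, at the cost of invoking the heavier classification of $p$-groups with a unique subgroup of order $p$; the paper's argument stays closer to first principles, needing only the order of $\GL_2(p)$ and the list of groups of order $p^3$. The only step in your write-up that deserves an explicit word is the inference ``no $\Z_p\times\Z_p$ subgroup, hence a unique subgroup of order $p$'': this follows because if $P_1\ne P_2$ are subgroups of order $p$ and $Z_0\le Z(M)$ has order $p$, then some $P_i\ne Z_0$ and $P_iZ_0\cong\Z_p\times\Z_p$. With that remark included, your proof is complete.
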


\begin{proof}
Let $G$ be a minimal $p^2$-cover group. Then $G$ is a $p$-group by Proposition~\ref{p:pgroup}. It is well known that $\Z_{p^2}$ and  $\Z_{p}\times\Z_{p}$ are the only two group of order $p^2$ up to isomorphism. Let $H$ and $K=\langle a\rangle$ be subgroups of $G$ which are isomorphic to  $\Z_p\times\Z_{p}$ and
$\Z_{p^2}$. Also $G$ contains a normal subgroup $N$ of order $p^2$. If $N$ is cyclic then there is an element $g\in H\backslash N$.
 Then $\langle g\rangle N$ is a  $p^2$-cover of order $p^3$ and the proof is complete by Remark~\ref{hrd7}. So
assume $N\cong \Z_p\times\Z_p$.  First assume $|N\cap K|=1$. Then $NK$ is a $p^2$-cover of order $p^4$. Hence $G=NK$. Since $N_G(N)/C_G(N)$ is isomorphic to a subgroup of $\Aut(N)\cong\GL(2,p)$, so $|N_G(N)/C_G(N)|\mid p$. Thus $a^p\in C_G(N)$. So $a^p\in Z(G)$. Also $N\cap Z(G)$ is non-trivial so there is an element $g\in Z(G)\cap N$ of order $p$. Hence $\langle g\rangle K$   is a  $p^2$-cover of order $p^3$ which is a contradiction by minimality of $G$. Hence $|N\cap K|=p$. So $|NK|=p^3$ and $NK$ is a $p^2$-cover. So the proof is complete by Remark~\ref{hrd7}.
\end{proof}

In the next case, we have the following.

\begin{theorem}\label{thm:2covers} \quad
\begin{enumerate}
\item There are two minimum $2^3$-cover groups, both of order $2^5$.
\item For prime $p>2$, there is no $p^3$-cover of order $p^5$, but there is
one of order $p^6$; so a minimum $p^3$-cover has order $p^6$.
\end{enumerate}
\end{theorem}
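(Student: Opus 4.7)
My plan is to prove the lower bound on $|G|$ by an $\Omega_1$-counting argument, exhibit an explicit cover of the claimed order, and handle part~(a) through the classification of small $2$-groups.

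For the lower bound, I will let $G$ be a minimum $p^3$-cover, which is a $p$-group by Proposition~\ref{p:pgroup}. Pick $a\in G$ of order $p^3$ and set $A=\langle a\rangle$. The subgroup $\Omega_1(G)$ generated by order-$p$ elements contains $\Z_p^3$, and for odd $p$ also contains $\Heis(p)$; since every maximal abelian subgroup of $\Heis(p)$ has rank $2$, the intersection $\Heis(p)\cap\Z_p^3$ has order at most $p^2$, so $|\Omega_1(G)|\geq p^4$ for odd $p$ and $|\Omega_1(G)|\geq 8$ for $p=2$. When $p\geq 5$, any group of order $p^5$ has class at most $4<p$ and hence is regular, so $\Omega_1(G)$ has exponent $p$ and $A\cap\Omega_1(G)$ is the unique order-$p$ subgroup of $A$. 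The product formula then gives
\[
|G|\geq\frac{|A|\cdot|\Omega_1(G)|}{|A\cap\Omega_1(G)|}\geq\frac{p^3\cdot p^4}{p}=p^6.
\]
For $p=2$ I will argue directly: any cover of order $16$ contains $Q_8$, so it must be $Q_{16}$ or $SD_{16}$, but $Q_{16}$ has a unique involution (precluding $\Z_2^3$) and in $SD_{16}$ every cyclic order-$4$ subgroup is self-centralising (precluding $\Z_4\times\Z_2$).

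For the construction in part~(b), I will take the class-$2$ $p$-group of order $p^5$ given by
\[
K=\langle a,b,c\mid a^{p^3}=b^p=c^p=1,\ bab^{-1}=a^{1+p^2},\ [a,c]=1,\ [b,c]=a^{p^2}\rangle,
\]
and set $H=K\times\Z_p$, of order $p^6$. I will verify that $H$ embeds all five groups of order $p^3$: $\langle a\rangle\cong\Z_{p^3}$; $\langle a^p,c\rangle\cong\Z_{p^2}\times\Z_p$; $\langle a^{p^2},b,c\rangle\cong\Heis(p)$, since $[b,c]=a^{p^2}$ is central of order $p$; and $\langle a^pc,b\rangle\cong G_p$, via the class-$2$ identities $(a^pc)^p=a^{p^2}$ and $b(a^pc)b^{-1}=a^p\cdot a^{p^2}c=(a^pc)^{1+p}$. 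A direct calculation gives $\Omega_1(K)=\Heis(p)$, so $\Omega_1(H)=\Heis(p)\times\Z_p$ contains the rank-$3$ elementary abelian subgroup $\langle a^{p^2},b\rangle\times\Z_p\cong\Z_p^3$.

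For part~(a), given the lower bound $|G|\geq 32$, I will exhibit two non-isomorphic groups of order $32$ that embed all five groups of order $8$, and verify that no other order-$32$ group does so. This is a finite check among the $51$ groups of order $32$, which I will carry out by hand (distinguishing candidates by exponent, number of involutions, and isomorphism type of the Frattini and derived subgroups), or by using the SmallGroup library in \textsc{Magma}. The main obstacle I anticipate is the $p=3$ case of the lower bound, where regularity can fail and $A\cap\Omega_1(G)$ may have order $p^2$; I expect to resolve this either by refining the $\Omega_1$-analysis using the simultaneous presence of $G_3\leq G$ to constrain which $p$th powers in $A$ lie in $\Omega_1(G)$, or, failing that, by direct enumeration of the $67$ groups of order $243$.
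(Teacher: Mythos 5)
Your proposal is essentially correct and, for the heart of part (b), follows the same route as the paper: for $p\ge 5$ a group of order $p^5$ has class at most $4<p$, hence is regular, so the elements of order dividing $p$ form a subgroup $\Omega_1(G)$ of exponent $p$; this subgroup contains both $(\Z_p)^3$ and $\Heis(p)$ and so has order at least $p^4$, meets a cyclic $\Z_{p^3}$ in at most $p$ elements, and the product formula forces $|G|\ge p^6$. The paper likewise falls back on machine computation for $p=3$ (where regularity genuinely fails) and for the enumeration of the order-$32$ covers in part (a), so your proposed fallbacks are exactly what is needed; the hoped-for refinement of the $\Omega_1$-analysis that would avoid computation at $p=3$ is not something the paper achieves either. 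Your order-$p^6$ example is a genuinely different construction: you build a split extension $(\Z_{p^3}\times\Z_p)\rtimes\Z_p$ by an explicit presentation, whereas the paper uses Hall's cyclic-extension criterion to manufacture a non-split extension of $\Heis(p)$ by $\Z_{p^2}$ containing an element $a$ with $a^{p^2}=z$. Your version has the advantage of being checkable directly from the relations (and your verifications of the four subgroups of $K$ are correct); both constructions then adjoin a direct factor $\Z_p$ to pick up $(\Z_p)^3$.

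Two small repairs are needed in your $p=2$ lower bound. First, the step ``any cover of order $16$ contains $Q_8$, so it must be $Q_{16}$ or $SD_{16}$'' is too quick: $Q_8\times\Z_2$ and the central product $Q_8\circ\Z_4$ also contain $Q_8$, so you must additionally use that the cover contains $\Z_8$, hence has a cyclic subgroup of index $2$, to cut the list down. Second, the claim that every cyclic order-$4$ subgroup of $SD_{16}$ is self-centralising is false: writing $SD_{16}=\langle a,b\mid a^8=b^2=1,\ b^{-1}ab=a^3\rangle$, the subgroup $\langle a^2\rangle$ has centraliser $\langle a\rangle\cong\Z_8$. The conclusion that $SD_{16}$ contains no $\Z_4\times\Z_2$ is still true (any subgroup of order $8$ is maximal, hence one of $\Z_8$, $D_8$, $Q_8$), but the cleanest argument is the paper's: a putative cover $G$ of order $16$ contains $A\cong\Z_8$ and $B\cong(\Z_2)^3$ with $|A\cap B|\le 2$, so $|AB|\ge 32$, which rules out order $16$ with no case analysis at all.
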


\begin{proof}
Part (a) can be proved by computer: we used the computer algebra system
\textsf{GAP}~\cite{gap}. The two $8$-covers are the groups 
\texttt{SmallGroup(32,40)} and
\texttt{SmallGroup(32,43)} in the \textsf{GAP} library. (There is no $8$-cover
of order~$16$. For suppose that $G$ was an $8$-cover of order~$16$. Then $G$
contains subgroups $A\cong\Z_8$ and $B\cong(\Z_2)^3$; clearly $|A\cap B|\le 2$,
so $|AB|=|A|\cdot|B|/|A\cap B|\ge32$.)

For (b), the case $p=3$ can also be shown using \textsf{GAP}. For $p\ge5$, we
proceed as follows. Let $G$ be a $p^3$-cover of order $p^5$. Then $G$ has
nilpotency class at most~$4$, so smaller than $p$; hence $G$ is a regular
$p$-group \cite[p.~183]{hall}. Now \cite[Theorem 12.4.5]{hall} shows that
the elements of order $p$ in $G$, together with the identity, form a subgroup
$H$ of $G$. Now since $G$ contains both the elementary abelian group $(\Z_p)^3$
of order $p^3$ and the non-abelian group $\Heis(p)$ of order $p^3$ and
exponent~$p$; so the subgroup $H$ must satisfy $|H|>p^3$, so $|H|\ge p^4$.
Now $G$ must also contain the cyclic group
$K=\Z_{p^3}$, and $|H\cap K|\le p$, so $|HK|\ge p^4\cdot p^3/p=p^6$. So no
$p^3$-cover of order $p^5$ can exist.

For the example, we start with the group $E=\Heis(p)$ of order $p^3$ and
exponent $p$:
\[E=\langle x,y,z\mid x^p=y^p=z^p=[x,z]=[y,z]=1,[x,y]=z\rangle.\]
It is easy to see that $E$ has an automorphism $\alpha$ such that
$\alpha(x)=x$ and $\alpha(y)=xy$.

Now we use the following result~\cite[Theorem 15.3.1]{hall}:

\begin{lemma}
Let $N$ be a finite group, $\alpha\in\Aut(N)$, and $m\in\mathbb{N}$. Then the following assertions are equivalent:
\begin{enumerate}
\item There exists a finite group $H$ such that $N\unlhd H$,
$H/N = \langle hN\rangle =\Z_m$ and $\alpha(x) = hxh^{-1}$ for all $x\in N$;
\item There exists $n\in N$ such that $\alpha(n) = n$ and
$\alpha^m(a) = nan^{-1}$ for all $a\in N$.
\end{enumerate}
\end{lemma}

We apply the lemma with $m=p^2$ and $n=z$, giving a non-split extension $H$ of
$E$ such that $H/E = \langle aE\rangle\cong\Z_{p^2}$ with $a^{p^2} = z$ and
$e^a = \alpha(e)$ for all $e\in E$. For $b:= a^p y^{-1}\in H$ we compute
$b^x = a^p[x,y]y^{-1} = bz = ba^{p^2} = b^{1+p}$. Hence,
$H$ has subgroups $E\cong\Heis(p)$, $\langle a\rangle\cong\Z_{p^3}$,
$\langle a^p,x\rangle\cong\Z_{p^2}\times\Z_p$ and 
$\langle b,x\rangle\cong G_p$.

Finally, set $G=H\times\Z_{p}$; it is clear that $G$ also contains $(\Z_p)^3$.
\end{proof}

In addition, computation with \textsf{GAP} shows that, for $p=3$, there are
many examples of $p^3$-covers of order $p^6$.

\medskip

Next we show that there are infinitely many minimal $2^3$-covers, and that
these may be taken to be strongly minimal; indeed, having no proper
subquotient which is a $2^3$-cover.

For the proof we use the \emph{semidihedral group}
\[SD_{2^n}=\langle a,b\mid a^{2^{n-1}}=b^2=1,b^{-1}ab=a^{2^{n-2}-1}\rangle\]
of order $2^n$, for $n\ge4$. We make a couple of observations about this
group.
\begin{itemize}
\item Its centre is cyclic of order $2$, generated by $a^{2^{n-2}}$.
\item $(ab)^2=a\cdot a^{2^{n-2}-1}=z$, so $ab$ has order $4$.
\item It is a $2$-generated $2$-group, and so has three maximal
subgroups of index $2$. These are $\langle a\rangle$ (cyclic), 
$\langle a^2,b\rangle$ (dihedral) and $\langle a^2,ab\rangle$ (generalized
quaternion).
\end{itemize}

\begin{theorem}
Let $n\ge4$, $A=SD_{2^n}$, and $C=\langle c\rangle=\Z_2$. Then the group
$A\times C$ of order $2^{n+1}$ is a minimal and co-minimal $8$-cover.
\label{t:inf8}
\end{theorem}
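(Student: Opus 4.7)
The plan is to verify that $G = A \times C$ is an $8$-cover, then check that it is minimal and co-minimal in turn.

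First I would confirm $G$ is an $8$-cover by exhibiting each of the five groups of order $8$ as a subgroup. Since $n \geq 4$, the cyclic group $\langle a \rangle \leq A$ has order $\geq 8$ and contains $\Z_8$; the dihedral maximal subgroup $\langle a^2, b \rangle \cong D_{2^{n-1}}$ of $A$ contains $D_8$; the quaternion maximal subgroup $\langle a^2, ab \rangle \cong Q_{2^{n-1}}$ contains $Q_8$; the pairwise-commuting involutions $z$, $b$, $c$ generate $\Z_2^3$; and using $(ab)^2 = z$ as in the observations preceding the theorem, $\langle ab, c \rangle \cong \Z_4 \times \Z_2$.

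For minimality, I would use that $\Phi(G) = \Phi(A) = \langle a^2 \rangle$, so $G/\Phi(G) \cong \Z_2^3$ and $G$ has exactly seven maximal subgroups. Three are of the form $M \times C$ where $M$ is one of the three maximal subgroups of $A$: the abelian $\Z_{2^{n-1}} \times \Z_2$ cannot contain $D_8$; the subgroup $D_{2^{n-1}} \times \Z_2$ has every element of order $4$ inside the abelian subgroup $\langle a \rangle \times C$ and so cannot contain $Q_8$; and the subgroup $Q_{2^{n-1}} \times \Z_2$ has only three involutions ($z$, $c$, $zc$) and so cannot contain $D_8$. The remaining four maximal subgroups project isomorphically onto $A$ and so are each isomorphic to $A = SD_{2^n}$. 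The main step is therefore to show that $\Z_4 \times \Z_2$ does not embed in $SD_{2^n}$; this I would do by direct centralizer computation. The elements of order $4$ in $SD_{2^n}$ are $a^{\pm 2^{n-3}}$ and $a^i b$ for odd $i$, and in each case the relation $bab = a^{2^{n-2}-1}$ yields $C_A(x) = \langle x \rangle \cong \Z_4$, so no involution outside $\langle x \rangle$ commutes with $x$, ruling out $\Z_4 \times \Z_2$.

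For co-minimality, I would exploit that $Z(G) = \langle z \rangle \times C \cong \Z_2^2$, so $G$ has exactly three minimal normal subgroups $\langle z \rangle$, $\langle c \rangle$, $\langle zc \rangle$, and every proper quotient of $G$ factors through one of the top quotients $G/\langle c \rangle \cong A$, $G/\langle zc \rangle \cong A$ (the second isomorphism coming from identifying $\bar c$ with $\bar a^{2^{n-2}}$ in the semidihedral presentation), and $G/\langle z \rangle \cong D_{2^{n-1}} \times C$. For the first two, $A$ contains no $\Z_4 \times \Z_2$ by the minimality step, and every proper quotient of $A$ is a quotient of $A/\langle z \rangle \cong D_{2^{n-1}}$, hence dihedral or cyclic, containing no $Q_8$. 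For the third, I would show that $D_{2^{n-1}} \times C$ and all of its quotients miss $Q_8$: every element $(a^k b, c^\epsilon)$ of $D_{2^{n-1}} \times C$ is an involution, so in any quotient $H = (D_{2^{n-1}} \times C)/M$ every element of order $4$ must have a representative in the abelian subgroup $\langle a \rangle \times C$; any two such representatives commute in $D_{2^{n-1}} \times C$, so their images commute in $H$, precluding $Q_8 \leq H$. Thus no proper quotient of $G$ is an $8$-cover.
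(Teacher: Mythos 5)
Your proposal is correct and follows essentially the same route as the paper: exhibit the five groups of order $8$ as subgroups, reduce minimality to the seven maximal subgroups (split according to whether they contain $c$, giving $K\times C$ for $K$ maximal in $A$, or a copy of $SD_{2^n}$) and co-minimality to the three maximal quotients, then check that none of the resulting groups, nor any of their quotients, is an $8$-cover; you simply supply more of the final verification than the paper does. One small correction: for $x=a^{\pm 2^{n-3}}$ the centraliser $C_A(x)$ is all of $\langle a\rangle\cong\Z_{2^{n-1}}$, not $\langle x\rangle\cong\Z_4$; your conclusion survives because $\langle a\rangle$ is cyclic, so its unique involution is $x^2\in\langle x\rangle$ and no involution outside $\langle x\rangle$ commutes with $x$.
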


\begin{proof}
First we show that all groups of order $8$ are subgroups of $G$. We have
$\langle a^{2^{n-4}}\rangle\cong\Z_8$,
$\langle a^{2^{n-3}},c\rangle\cong\Z_4\times \Z_2$,
$\langle a^{2^{n-2}},b,c\rangle\cong(\Z_2)^3$,
$\langle a^{2^{n-3}},b\rangle\cong D_8$, and
$\langle a^{2^{n-3}},ab\rangle\cong Q_8$.

Now we show that no proper subgroup of $G$ is an $8$-cover. It suffices to
consider a maximal subgroup $H$. Let $\phi$ be the projection of $A\times C$
onto the first factor. If $c\notin H$, then the restriction of $\phi$ to $H$
is an isomorphism to $A$, so $H$ is semidihedral. If $c\in H$, then
$H=K\times C$ where $K$ is a maximal subgroup of $A$, and so is cyclic,
dihedral or generalized quaternion. None of these groups is an $8$-cover.

Finally we show that no proper quotient of $G$ is a $2$-cover. Again it
suffices to consider maximal quotients $G/N$, where $N$ is a normal subgroup
of $G$. Then $N\le Z(G)=\langle z,c\rangle$; so $N=\langle z\rangle$,
$\langle c\rangle$ or $\langle zc\rangle$.  If $z\notin N$, then the 
restriction of the projection $G\to G/N$ to $A$ is an isomorphism, so $G/N$ is
semidihedral. Otherwise $G/N=(A/\langle z\rangle)\times C
\cong D_{2^{n-1}}\times \Z_2$. Again none of these
groups, or any of their quotients, is an $8$-cover.
\end{proof}

\begin{remark} A slightly more elaborate argument shows that in fact no
quotient of a subgroup of $G$, apart from $G$ itself, is an $8$-cover.
\end{remark}

The numbers of $8$-covers of order $2^n$ are given below, together with the numbers of minimal and strongly minimal $8$-covers, for $5\leq n\leq 8$.

\begin{center}\begin{tabular}{ccccc}
\hline Order & $32$&$64$&$128$&$256$
\\\hline  Number of groups&$51$&$267$&$2328$&$56092$
\\ Number of $8$-covers&$2$&$45$&$745$&$14798$
\\ Number of minimal $8$-covers&$2$&$18$&$85$&$969$
\\ Number of strongly minimal $8$-covers&$2$&$14$&$3$&$7$
\\ \hline
\end{tabular}\end{center}

\begin{remark}
A minimum $16$-cover has order $2^8$, and \texttt{SmallGroup(256,384)} in the
\textsf{GAP} library is an example.
\end{remark}

\medskip

In contrast to the upper bound for the order of a minimum $p^m$-cover (the order
of the Sylow subgroup of $S_{p^m}$), we give a lower bound $p^{\Omega(m^2)}$,
which is probably rather weak.

We begin with a brief note. The fraction $|\GL(n,p)|/p^{n^2}$ is the 
probability that an $n\times n$ matrix over the field of order $p$ is
invertible. It can be written as
\[\prod_{i=1}^{n}(1-p^{-i}).\]
The theory of infinite products shows that, as $n\to\infty$ with $p$ fixed,
it decreases to a positive limit $\theta(p)$, which is an evaluation of a
Jacobi theta-function. It is easily seen that $\theta(p)$ is an increasing
function of $p$. The value of $\theta(2)$ is $0.2887\dots$. So the
probability that an $n\times n$ matrix over the $p$-element field is invertible is at
least $\theta(2)$ for any $n$ and $p$.

\begin{lemma}
Let $G$ be a group of order $p^n$. Then the number of $n$-tuples of elements of $G$, which generate $G$ is at least $cp^{n^2}$.
\label{l:ntuples}
\end{lemma}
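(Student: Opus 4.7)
The natural approach is to pass to the Frattini quotient. Since $G$ is a $p$-group, a standard consequence of the Burnside basis theorem is that an $n$-tuple $(g_1,\ldots,g_n)$ generates $G$ if and only if its image generates $G/\Phi(G)$. Writing $d=d(G)$ for the minimum number of generators of $G$, we have $G/\Phi(G)\cong\mathbb{F}_p^d$ with $d\le n$ and $|\Phi(G)|=p^{n-d}$, so the plan is to count generating $n$-tuples of $\mathbb{F}_p^d$ and then lift.

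First I would count the $n$-tuples $(\bar g_1,\ldots,\bar g_n)$ of elements of $\mathbb{F}_p^d$ that span. These are precisely the surjective linear maps $\mathbb{F}_p^n\to\mathbb{F}_p^d$ (viewing the tuple as specifying the images of a basis), and their number is the well-known expression
\[
\prod_{i=0}^{d-1}\bigl(p^n-p^i\bigr)=p^{nd}\prod_{j=n-d+1}^{n}\bigl(1-p^{-j}\bigr).
\]
Each such tuple in $G/\Phi(G)$ has exactly $|\Phi(G)|^n=p^{n(n-d)}$ preimages in $G^n$, since we may independently multiply each coordinate by an arbitrary element of $\Phi(G)$. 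Multiplying, the number of generating $n$-tuples of $G$ is
\[
p^{n(n-d)}\cdot p^{nd}\prod_{j=n-d+1}^{n}\bigl(1-p^{-j}\bigr)=p^{n^2}\prod_{j=n-d+1}^{n}\bigl(1-p^{-j}\bigr).
\]

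Finally I would use the observation made just before the lemma: the product $\prod_{j=1}^{n}(1-p^{-j})$ is bounded below by $\theta(p)\ge\theta(2)>0$, uniformly in $n$, and hence so is the shorter product appearing above. Setting $c=\theta(2)$ therefore yields the desired bound of $cp^{n^2}$ generating $n$-tuples. There is no real obstacle here: once the Frattini quotient reduces the counting problem to linear algebra, the uniform lower bound on $\prod(1-p^{-j})$ already recorded in the paper finishes the job.
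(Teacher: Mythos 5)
Your proof is correct and follows essentially the same route as the paper: both pass to the Frattini quotient via the Burnside basis theorem and invoke the uniform lower bound $\prod_j(1-p^{-j})\ge\theta(2)$ recorded just before the lemma. The only difference is that you count all spanning $n$-tuples of $G/\Phi(G)$ exactly and lift, whereas the paper bounds below by counting only those $n$-tuples whose first $d(G)$ coordinates already generate; both yield $cp^{n^2}$.
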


\begin{proof}
Let $|G/\Phi(G)|=p^k$.
By Burnside's basis theorem, a $k$-tuple $g_1,\ldots,g_k$ generates $G$ if
and only if the images of $g_1,\dots,g^k$ in $G/\Phi(G)$ form a basis for
this quotient, which is isomorphic to a $k$-dimensional vector space over
the $p$-element field. The number of such bases is the order of $\GL(k,p)$,
which as noted is at least $cp^{k^2}$. For each basis element, there are
$p^{n-k}$ elements of the corresponding coset of $\Phi(G)$ in $G$. Also, we
can complete the $n$-tuple by choosing arbitrary elements of $G$, each in $p^n$
ways. So the number of $n$-tuples is
\[|\GL(k,p)|\cdot p^{k(n-k)}\cdot p^{(n-k)n} \ge cp^{n^2},\]
as required.
\end{proof}

\begin{theorem}
The order of a minimum $p^n$-cover is at least $p^{(2/27+o(1))n^2}$.
\label{t:pncover}
\label{t:227}
\end{theorem}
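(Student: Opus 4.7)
The plan is a double-counting argument: count ordered $n$-tuples of elements of $G$ by the subgroup they generate, applying the previous lemma as a lower bound and using the Higman--Sims estimate on the number of groups of order $p^n$.

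First, I would set $|G| = p^N$, so $G$ contains exactly $p^{Nn}$ ordered $n$-tuples of elements. Each such tuple generates some subgroup of $G$. Now group the tuples by the subgroup they generate, and restrict attention to those generating a subgroup of order $p^n$. Writing $N(p^n)$ for the number of isomorphism classes of groups of order $p^n$, the hypothesis that $G$ is a $p^n$-cover means that for each of these $N(p^n)$ isomorphism types, there is at least one subgroup of $G$ realising it. Hence $G$ has at least $N(p^n)$ subgroups of order $p^n$.

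By Lemma~\ref{l:ntuples}, each subgroup $H\le G$ of order $p^n$ is generated by at least $cp^{n^2}$ of its ordered $n$-tuples. Since an $n$-tuple generates a unique subgroup, summing over all subgroups of order $p^n$ gives the inequality
\[
N(p^n)\cdot cp^{n^2}\;\le\;\sum_{H\le G,\ |H|=p^n}\#\{\text{generating $n$-tuples of }H\}\;\le\;p^{Nn},
\]
so $p^N\ge c^{1/n}\,N(p^n)^{1/n}\,p^{n}$.

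Finally, I would invoke the classical Higman--Sims estimate (\cite{higman}, \cite{sims}) that the number of groups of order $p^n$ satisfies $N(p^n)=p^{(2/27+o(1))n^3}$ as $n\to\infty$. Substituting this into the previous inequality yields
\[
|G|=p^N\;\ge\; c^{1/n}\,p^{(2/27+o(1))n^2+n},
\]
which is $p^{(2/27+o(1))n^2}$ after absorbing the lower-order terms and the constant $c^{1/n}\to 1$ into the $o(1)$. There is no real obstacle beyond citing the Higman--Sims asymptotic; the only slight care needed is in the double-count to distinguish subgroups of $G$ of order $p^n$ from isomorphism classes of such subgroups, and to note that it is precisely the $p^n$-cover hypothesis that forces at least one subgroup per isomorphism class.
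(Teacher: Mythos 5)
Your proposal is correct and follows essentially the same double-counting argument as the paper: bound the number of isomorphism types of groups of order $p^n$ by $p^{Nn}/(cp^{n^2})$ using Lemma~\ref{l:ntuples}, then compare with the Higman--Sims asymptotic. Your explicit care in distinguishing subgroups from isomorphism classes and in noting that each tuple generates a unique subgroup is a welcome clarification, but the substance is identical.
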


\begin{proof}
Suppose that $G$ is a minimum $p^n$-cover, of order $p^N$. There are $p^{Nn}$
$n$-tuples of elements of $G$; among them are generating tuples for all groups
of order $p^n$. By Lemma~\ref{l:ntuples}, each group of order $p^n$ has at least
$cp^{n^2}$ generating $n$-tuples. So
the number of groups of order $p^n$ is at most
\[p^{nN}/(cp^{n^2})=c^{-1}p^{n(N-n)}.\]

However, it was proved by Higman and Sims~\cite{higman,sims} (see also
\cite{bnv}) that the number of different
groups of order $p^n$ is $p^{(2/27+o(1))n^3}$. So
\[c^{-1}p^{n(N-n)}\ge p^{(2/27+o(1))n^3},\]
from which we find that $N\ge(2/27+o(1))n^2$.
\end{proof}

\begin{question}
Find better bounds for the order of a minimum $p^n$-cover. In particular, is
there an upper bound of the form $p^{F(n)}$, where $F$ is independent of $p$?
\end{question}

In the next section, we will find the smallest abelian group which contains
all abelian groups of order $p^n$; its order is roughly $p^{n\log n}$.

\section{Cyclic, abelian and nilpotent groups} \label{s:can}

The observation that $A_5$ is a minimum cover for
$\{\Z_3,(\Z_2)^2,\Z_5\}$ shows that it is not true that, if all groups
in $\F$ are abelian, nilpotent, or soluble, then every minimum
$\F$-cover has the same property. Moreover, $\{\Z_2,\Z_3\}$ has two minimum
covers, $\Z_6$ and $S_3$. So the best we can hope is that, if all groups in
$\F$ have a certain property, then at least one minimum $\F$-cover has this
property. This is the case for cyclic groups:

\begin{theorem}
Let $n_1,\ldots,n_k$ be positive integers and $N=\lcm(n_1,\ldots,n_k)$. Then
$\Z_N$ is a minimum cover for $\F=\{\Z_{n_1},\ldots,\Z_{n_k}\}$.
\end{theorem}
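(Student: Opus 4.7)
The proof splits naturally into a lower bound and an upper bound on the order of a minimum cover, and both are almost immediate from material already in the paper.

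First I would invoke Proposition~\ref{p:order}(a): any $\F$-cover $G$ must satisfy $\lcm\{|F|:F\in\F\}\mid|G|$, so $N=\lcm(n_1,\ldots,n_k)$ divides $|G|$ and in particular $|G|\ge N$. This gives the lower bound for the order of any minimum $\F$-cover.

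Next I would exhibit $\Z_N$ as an $\F$-cover. Since each $n_i$ divides $N$, the cyclic group $\Z_N$ contains a (unique) subgroup of order $n_i$, which is necessarily cyclic, hence isomorphic to $\Z_{n_i}$. Thus every group in $\F$ embeds in $\Z_N$, so $\Z_N$ is an $\F$-cover of order exactly $N$.

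Combining these two observations, $\Z_N$ attains the lower bound on the order of an $\F$-cover, and therefore is a minimum $\F$-cover. There is no real obstacle here — the statement is essentially a direct consequence of Lagrange's theorem (via Proposition~\ref{p:order}) together with the well-known fact that cyclic groups contain a unique subgroup of each divisor order.
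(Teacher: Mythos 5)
Your proof is correct and follows exactly the paper's argument: the paper also cites Proposition~\ref{p:order} for the lower bound $N \mid |G|$ and observes that $\Z_N$ is an $\F$-cover (which you justify slightly more explicitly via the unique subgroup of each divisor order in a cyclic group). No differences worth noting.
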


\begin{proof}
Clearly $\Z_N$ is an $\F$-cover, and by Proposition~\ref{p:order} it is
minimum.
\end{proof}

Perhaps the next simplest example of Question~\ref{q:property} is one which we have
not been able to settle:

\begin{question}
Let $\F$ be a set of abelian $p$-groups, for some prime $p$. Is there a
minimum $\F$-cover which is an abelian $p$-group?
\label{q:apgp}
\end{question}

\begin{theorem} \label{t:can}
Suppose that $\F$ is a finite set of finite nilpotent groups. Then there
is a minimum $\F$-cover which is nilpotent. If Question~\ref{q:apgp} has an
affirmative answer, then the same holds with ``abelian'' replacing
``nilpotent''.
\end{theorem}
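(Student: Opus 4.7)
The plan is to exploit the fact that every finite nilpotent group decomposes uniquely as the direct product of its Sylow subgroups, which lets us reduce the problem to one prime at a time. For each prime $p$ dividing some $|F|$ with $F \in \F$, I would set $\F_p = \{F_p : F \in \F\}$, where $F_p$ denotes the Sylow $p$-subgroup of $F$. Each $\F_p$ is a finite set of finite $p$-groups, so it has a minimum $\F_p$-cover $G_p$; by Proposition~\ref{p:pgroup} this $G_p$ is itself a $p$-group. The candidate for the minimum nilpotent $\F$-cover is then
\[
G \;=\; \prod_{p} G_p,
\]
the product taken over the (finitely many) relevant primes, which is nilpotent as a direct product of $p$-groups.

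The two things to verify are that $G$ is an $\F$-cover and that no $\F$-cover has strictly smaller order. For the first, each $F\in\F$ satisfies $F \cong \prod_p F_p$ with $F_p \le G_p$, so $F$ embeds in $G$. For the second, let $H$ be any $\F$-cover (not assumed nilpotent), and for each relevant prime $p$ let $H_p$ be a Sylow $p$-subgroup of $H$. For every $F \in \F$, the $p$-subgroup $F_p \le H$ is contained in some Sylow $p$-subgroup of $H$, hence in a conjugate of $H_p$, so $H_p$ is an $\F_p$-cover. Minimality of $G_p$ gives $|G_p| \le |H_p|$, and multiplying over all primes yields
\[
|G| \;=\; \prod_p |G_p| \;\le\; \prod_p |H_p| \;\le\; |H|,
\]
so $G$ is a minimum cover.

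For the conditional ``abelian'' statement, I would run the same argument but choose each $G_p$ to be an abelian $p$-group minimum cover of $\F_p$; such a choice exists precisely when Question~\ref{q:apgp} has an affirmative answer, since the Sylow $p$-subgroups of abelian groups in $\F$ are abelian $p$-groups. The direct product $G = \prod_p G_p$ is then abelian, and the verification above goes through unchanged.

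There is no real obstacle here beyond the bookkeeping: the key point is that passing to Sylow subgroups turns embedding in a nilpotent cover into a genuinely prime-by-prime question, and that the order of \emph{any} cover $H$ is at least the product of the orders of its Sylow subgroups taken one prime at a time. The only minor subtlety is observing that Sylow $p$-subgroups of $H$ are conjugate, so ``$F_p \le H_p$ up to conjugacy'' is enough to conclude that $H_p$ covers $\F_p$.
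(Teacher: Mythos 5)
Your proof is correct and follows essentially the same route as the paper: decompose each nilpotent group into its Sylow subgroups, take a minimum $p$-group cover for each prime separately (using Proposition~\ref{p:pgroup}), form the direct product, and compare orders prime by prime against the Sylow subgroups of an arbitrary cover. The only cosmetic difference is that you bound an arbitrary cover from below directly, whereas the paper phrases the comparison against a chosen minimum cover; the substance is identical, including the treatment of the abelian case.
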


\begin{proof}
Let $\F=\{F_1,\ldots,F_r\}$ be a finite set of finite nilpotent groups,
and let $G$ be a minimum $\F$-cover. For each prime $p$, let
$F_i(p)$ be the Sylow $p$-subgroup of $F_i$, and let
$\F(p)=\{F_1(p),\ldots,F_r(p)\}$. Let $H(p)$ be a minimum $\F(p)$-cover, and
note that $H(p)$ is a $p$-group, by Proposition~\ref{p:pgroup}. Let
$H$ be the direct product of the groups $H(p)$.
Now a Sylow $p$-subgroup $G(p)$ of $G$ is an $\F(p)$-cover, so
$|G(p)|\ge|H(p)|$, and thus $|G|\ge|H|$. But since each group in $\F$ is
the direct product of its Sylow subgroups, it is embeddable in $H$, and
thus by minimality $|G|=|H|$. So $H$ is a nilpotent cover of $\F$ with
smallest possible order.

Now suppose that all the groups in $\F$ are abelian, and that 
Question~\ref{q:apgp} has an affirmative answer. The same argument then
applies, using the fact
that a nilpotent group is abelian if and only if all its Sylow subgroups are.
\end{proof}

What is the order of the minimum cover? The theorem shows that it is enough to
find the order of a minimum cover of a set of $p$-groups. We can answer
this question in the case of abelian $p$-groups, again assuming that
Question~\ref{q:apgp} has an affirmative answer.

Suppose that $\F=\{F_1,\ldots,F_r\}$ is a set of abelian $p$-groups. We can
write each one in canonical form:
\[F_i=\Z_{p^{a(i,1)}}\times\cdots\times\Z_{p^{a(i,k)}},\]
where $a(i,1)\ge a(i,2)\ge\cdots\ge a(i,k)$; by adding extra zero terms if
necessary we can assume that the value of $k$ is the same for each group. Let
\[c(j)=\max\{a(1,j),a(2,j),\ldots,a(r,j)\}\]
for $j=1,\ldots,k$. We claim that
\[c(1)\ge c(2)\ge\cdots\ge c(k).\]
For suppose that $c(j+1)=a(i,j+1)$. Then $c(j)\ge a(i,j)\ge a(i,j+1)=c(j+1)$.

Let
\[P=\Z_{p^{c(1)}}\times\cdots\times\Z_{p^{c(k)}}.\]
The above claim shows that this is the canonical form for $P$.

\begin{prop}
With the above notation, $P$ is the smallest abelian $\F$-cover.
\label{p:minimum}
\end{prop}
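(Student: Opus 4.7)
My plan is to establish the standard embedding criterion for abelian $p$-groups first, and then derive both parts of the proposition from it.

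The key lemma I would prove is that for two abelian $p$-groups
\[A = \Z_{p^{d(1)}} \times \cdots \times \Z_{p^{d(k)}} \quad\text{and}\quad F = \Z_{p^{a(1)}} \times \cdots \times \Z_{p^{a(k)}}\]
in canonical form with weakly decreasing exponents (padded with zeros to a common length if needed), $F$ embeds in $A$ if and only if $a(j) \le d(j)$ for every $j$. The ``if'' direction is immediate, since $\Z_{p^s}$ embeds in $\Z_{p^t}$ whenever $s \le t$ and these embeddings combine factorwise. For ``only if'', I would count $p^t$-torsion: the subgroup $A[p^t]$ of elements killed by $p^t$ has order $p^{\sum_j \min(d(j),t)}$, and the analogous formula holds for $F$; an embedding $F \hookrightarrow A$ restricts to $F[p^t] \hookrightarrow A[p^t]$, so $\sum_j \min(a(j),t) \le \sum_j \min(d(j),t)$ for every $t \ge 0$. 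Specialising $t = a(j)$ and comparing both sides then forces $a(j) \le d(j)$ for each $j$.

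Given the lemma, the proposition is quick. For coverage, the definition $c(j) = \max_i a(i,j)$ gives $a(i,j) \le c(j)$ for every $i$ and $j$, so each $F_i$ embeds in $P$ by the lemma. For minimality, let $Q$ be any abelian $\F$-cover. Its Sylow $p$-subgroup $Q_p$ is still an $\F$-cover (the image of each $p$-group $F_i$ in $Q$ lies in $Q_p$) and has $|Q_p| \le |Q|$, so we may assume $Q$ itself is an abelian $p$-group. Write $Q$ in canonical form $\Z_{p^{d(1)}} \times \cdots \times \Z_{p^{d(\ell)}}$ with $d(1) \ge \cdots \ge d(\ell) \ge 0$, padding with trivial factors so that $\ell \ge k$. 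Since each $F_i$ embeds in $Q$, the lemma yields $a(i,j) \le d(j)$ for all $i$ and $j$; taking the maximum over $i$ gives $c(j) \le d(j)$ for every $j$. Summing exponents then yields $|Q| = p^{\sum_j d(j)} \ge p^{\sum_j c(j)} = |P|$, as required.

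I do not anticipate any genuine obstacle: the substance is entirely in the embedding lemma, and the rest is bookkeeping. The one point to be careful about is the padding convention when the canonical forms of $Q$, $P$ and the various $F_i$ have different lengths, but this is routine once one fixes a common length at the outset.
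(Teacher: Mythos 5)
Your overall architecture matches the paper's: both reduce the proposition to the embedding criterion for abelian $p$-groups in canonical form ($F$ embeds in $A$ if and only if the exponents satisfy $a(j)\le d(j)$ for all $j$), and the surrounding bookkeeping (coverage from the definition of $c(j)$, minimality by comparing the canonical form of an arbitrary abelian cover componentwise) is fine. However, your proof of the ``only if'' direction of the lemma has a genuine gap. The inequalities $\sum_j\min(a(j),t)\le\sum_j\min(d(j),t)$ for all $t$ (equivalently $|F[p^t]|\le|A[p^t]|$) do \emph{not} force $a(j)\le d(j)$: take $F=\Z_{p^3}\times\Z_p$ and $A=\Z_{p^2}\times\Z_{p^2}$. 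Then $|F[p^t]|\le|A[p^t]|$ for every $t$ (the orders are $p^2,p^3,p^4,p^4,\dots$ versus $p^2,p^4,p^4,\dots$), yet $a(1)=3>2=d(1)$, and indeed $F$ does not embed in $A$ since $A$ has exponent $p^2$. Your claimed specialisation $t=a(1)$ gives $4\le 4$ here, not a contradiction: the cumulative torsion orders are strictly weaker invariants than componentwise domination of the exponent sequences.

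The repair is small. Either compare the \emph{successive quotients} $|G[p^t]|/|G[p^{t-1}]|=p^{\#\{j\,:\,\text{exponent}\ge t\}}$ rather than the cumulative orders; these do behave monotonically under embeddings, because $F[p^{t-1}]=F[p^t]\cap A[p^{t-1}]$ gives an embedding of $F[p^t]/F[p^{t-1}]$ into $A[p^t]/A[p^{t-1}]$, and their domination for all $t$ is exactly conjugate-partition domination, which is equivalent to $a(j)\le d(j)$ for all $j$. Or argue as the paper does: the first $j$ factors of $F$ contain a subgroup isomorphic to $(\Z_{p^{a(j)}})^j$, and embedding this in $A$ forces at least $j$ of the exponents $d(l)$ to be at least $a(j)$, whence $d(j)\ge a(j)$ since the $d(l)$ are non-increasing.
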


The proof depends on the following lemma:

\begin{lemma}
Let 
\[A=\Z_{p^{a(1)}}\times\cdots\times\Z_{p^{a(k)}}\hbox{ and }
B=\Z_{p^{b(1)}}\times\cdots\times\Z_{p^{b(k)}}\]
be abelian $p$-groups in canonical form. Then $B$ is embeddable in $A$ if and
only if $b(j)\le a(j)$ for $j=1,\ldots,k$.
\end{lemma}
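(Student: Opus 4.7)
The forward direction (``if'') is routine: when $b(j) \le a(j)$, the cyclic group $\Z_{p^{a(j)}}$ contains a unique subgroup of order $p^{b(j)}$, namely $p^{a(j)-b(j)}\Z_{p^{a(j)}}$, and this subgroup is cyclic of order $p^{b(j)}$; taking the direct product of these componentwise inclusions embeds $B$ into $A$.

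For the converse, the plan is to extract a family of numerical invariants from any abelian $p$-group $G$ that (i) behave monotonically under embeddings and (ii) read off the decreasing sequence of elementary divisor exponents of $G$. The natural choice is the family of Ulm-type invariants
\[
u_m(G) \;:=\; \dim_{\mathbb{F}_p}\bigl((p^{m-1}G)[p]\bigr), \qquad m \ge 1,
\]
where $G[p]$ denotes the $p$-torsion subgroup and $p^{m-1}G$ the image of the $p^{m-1}$-power endomorphism. First I would observe that $u_m$ is monotone under injections: an embedding $B \hookrightarrow A$ restricts to an embedding $p^{m-1}B \hookrightarrow p^{m-1}A$, and hence to an embedding on the $p$-torsion, so $u_m(B) \le u_m(A)$ for all $m \ge 1$. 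Next, since $u_m$ is additive on direct products and $u_m(\Z_{p^a})$ equals $1$ if $a \ge m$ and $0$ otherwise, the canonical forms of $A$ and $B$ give
\[
u_m(A) = |\{j : a(j) \ge m\}|, \qquad u_m(B) = |\{j : b(j) \ge m\}|.
\]

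The final step is the combinatorial translation: the inequalities $|\{j : b(j) \ge m\}| \le |\{j : a(j) \ge m\}|$ for all $m \ge 1$ are equivalent, in the presence of the decreasing orderings of the $a(j)$ and $b(j)$, to $b(j) \le a(j)$ for all $j$. Indeed, if $b(j_0) > a(j_0)$ for some $j_0$, then setting $m := b(j_0)$, monotonicity of the $b$-sequence gives $|\{j : b(j) \ge m\}| \ge j_0$, whereas $a(j_0) < m$ together with monotonicity of the $a$-sequence gives $|\{j : a(j) \ge m\}| \le j_0 - 1$, a contradiction. The only mildly delicate point in the whole argument is this last translation, as everything else is essentially formal; so that is where I would be most careful.
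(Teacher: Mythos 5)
Your proof is correct and is essentially the paper's argument: the paper's converse observes that $B$ contains $(\Z_{p^{b(j)}})^j$ and that embedding this in $A$ forces at least $j$ of the $a(i)$ to be at least $b(j)$, which is exactly the inequality $u_{b(j)}(B)\ge j$ versus $u_{b(j)}(A)=|\{i: a(i)\ge b(j)\}|$ in your notation. Your version simply makes explicit, via the invariants $u_m(G)=\dim_{\mathbb{F}_p}\bigl((p^{m-1}G)[p]\bigr)$ and the final combinatorial translation, the counting that the paper leaves implicit.
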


\begin{proof}
Suppose that the inequalities hold. Then for each $j$ we can choose a subgroup
$\Z_{p^{b(j)}}$ of $\Z_{p^{a(j)}}$; the direct product of these subgroups is
isomorphic to $B$.

Conversely, suppose that $B$ is embeddable in $A$. Then $B$ contains a 
subgroup $(\Z_{p^{b(j)}})^j$; in order to embed this in $A$, we require that
at least $j$ of $a(1),\ldots,a(k)$ are greater than or equal to $b(j)$ for
each $j$. Since the $a$ are non-increasing, this requires $a(j)\ge b(j)$.
\end{proof}

Now, to complete the proof of Proposition~\ref{p:minimum}, note that
in the notation before the proposition, $P$ embeds
all the $F_i$ if and only if $c(j)\ge a(i,j)$ for all $i$. So
$P$ is the smallest abelian $\F$-cover.\qed

\medskip

We can use this result to find the smallest abelian group containing every
abelian group of order $p^n$.

Define a function $f$ by the rule
\[f(n)=\sum_{k=1}^n\lfloor n/k\rfloor.\]

\begin{corollary} \label{c:abelian}
Let $\F$ be the set of all abelian groups of order $p^n$. There is a unique
smallest abelian $\F$-cover; its order is $p^{f(n)}$. If the answer to
Question~\ref{q:apgp} is affirmative, it is a minimum $\F$-cover.
\end{corollary}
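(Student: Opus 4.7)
The plan is to apply Proposition~\ref{p:minimum} directly. The abelian groups of order $p^n$ are in bijection with the partitions of $n$: a partition $(a(1), a(2), \ldots, a(k))$ with $a(1) \geq a(2) \geq \cdots \geq a(k) \geq 1$ and $\sum_j a(j) = n$ corresponds to $\Z_{p^{a(1)}} \times \cdots \times \Z_{p^{a(k)}}$. To match the setup of Proposition~\ref{p:minimum}, pad with zeros so that every partition has length $n$ (which is the maximum possible, attained by the all-ones partition). With this convention, $\F$ becomes a family indexed by the partitions of $n$, and the smallest abelian $\F$-cover $P$ has invariants $c(j) = \max\{a(i,j) : F_i \in \F\}$ for $j = 1, \ldots, n$.

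Next I would compute $c(j)$ explicitly. The key observation is that for any partition $(a(1), \ldots, a(n))$ of $n$ with $a(1) \geq \cdots \geq a(n) \geq 0$, one has
\[
j \cdot a(j) \leq a(1) + a(2) + \cdots + a(j) \leq n,
\]
so $a(j) \leq \lfloor n/j \rfloor$. Equality is achieved by the partition with $j$ parts equal to $\lfloor n/j \rfloor$ and a single additional part equal to $n - j\lfloor n/j \rfloor$ (inserted in the appropriate position to preserve the decreasing condition, and with the rest set to zero); this is a valid partition of $n$, so the bound is attained. Hence $c(j) = \lfloor n/j \rfloor$, and by Proposition~\ref{p:minimum} the smallest abelian $\F$-cover is
\[
P = \Z_{p^{\lfloor n/1 \rfloor}} \times \Z_{p^{\lfloor n/2 \rfloor}} \times \cdots \times \Z_{p^{\lfloor n/n \rfloor}},
\]
of order $p^{f(n)}$ with $f(n) = \sum_{j=1}^n \lfloor n/j \rfloor$.

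For uniqueness, note that the canonical form of an abelian $p$-group is unique, so any abelian $\F$-cover of the same order must coincide with $P$. Finally, the conditional last sentence follows at once from Theorem~\ref{t:can}: if Question~\ref{q:apgp} has an affirmative answer, then some minimum $\F$-cover is abelian, and by the uniqueness just established it must be $P$. The argument is essentially bookkeeping once Proposition~\ref{p:minimum} is in hand; the only step requiring a small observation is the evaluation of the maximum of $a(j)$ over partitions, which is straightforward.
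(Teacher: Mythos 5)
Your argument is correct and follows essentially the same route as the paper: both apply Proposition~\ref{p:minimum}, derive $a(j)\le\lfloor n/j\rfloor$ from $j\,a(j)\le a(1)+\cdots+a(j)\le n$, and exhibit a partition attaining the bound (the paper uses $j$ nearly equal parts $\lfloor n/j\rfloor$ or $\lceil n/j\rceil$, you use $j$ copies of $\lfloor n/j\rfloor$ plus the remainder --- both give $j$th part exactly $\lfloor n/j\rfloor$ after sorting). One small quibble: your uniqueness justification (``any abelian $\F$-cover of the same order must coincide with $P$'' because canonical forms are unique) is not a valid inference as stated --- two abelian $p$-groups of equal order need not be isomorphic; the correct reason is that the Lemma forces every abelian $\F$-cover to have $j$th invariant at least $c(j)$, so equality of orders forces equality of all invariants.
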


\begin{proof}
In the notation introduced before Proposition~\ref{p:minimum}, we have
$c(k)=\lfloor n/k\rfloor$ for $k=1,\ldots,n$. For if the factors in the
canonical decomposition of an abelian group of order $p^n$ have orders
$p^{a(1)}$, $p^{a(2)}$, \dots, then 
\[ka(k)\le a(1)+\cdots+a(k)\le n,\]
so $a(k)\le\lfloor n/k\rfloor$; but there is a group of order $p^n$ with
$k$ invariant factors which are of nearly equal orders (that is, orders
$p^{\lfloor n/k\rfloor}$ or $p^{\lceil n/k\rceil}$); the $k$th of these in
non-increasing order has order $p^{\lfloor n/k\rfloor}$.

So the smallest abelian group covering $\F$ has order
$p^{\sum\lfloor n/k\rfloor}=p^{f(n)}$, as required.
\end{proof}

For $p^n=2^2$ and $2^3$, this gives respectively $8$ and $32$ for the
smallest abelian group containing all abelian groups of order $p^n$.
We have seen that these are also the orders of minimal covers for all
groups of these orders. Furthermore, no smaller group can cover all abelian
groups of these orders, by the proof of Theorem~\ref{thm:2covers}. 
So Question~\ref{q:apgp} has an affirmative answer in
these cases.

We note that the order of the smallest abelian cover of the class of abelian
groups of order $p^n$  is roughly $p^{n\log n}$, which can be contrasted with
the lower bound of $p^{cn^2}$ for a group covering every group of order $p^n$.
More precisely, $f(n)=n(\log n + 2\gamma - 1) + O(\sqrt{n})$, where $\gamma$ is
the Euler--Mascheroni constant (Dirichlet~\cite{pgld}).

The sequence of values of the function $f$ is sequence A006218 in the
On-Line Encyclopedia of Integer Sequences~\cite{oeis}. This gives many
interpretations of the sequence, but the one given here appears to be new.

Combining the result for the prime factors of an integer $n$, we obain the
following result.

\begin{theorem}
Let $n=p_1^{m_1}\cdots p_r^{a_r}$, where $p_1,\ldots,p_r$ are distinct primes.
Then the order of the smallest abelian group which embeds all abelian groups
of order $n$ is
\[A(n)=p_1^{f(m_1)}\cdots p_r^{f(m_r)},\]
where $f(m)=\sum_{k=0}^m\lfloor m/k\rfloor$.
\end{theorem}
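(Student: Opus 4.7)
The plan is to reduce the theorem to Corollary~\ref{c:abelian} via the Sylow decomposition of finite abelian groups. Any finite abelian group $A$ of order $n=p_1^{m_1}\cdots p_r^{m_r}$ decomposes uniquely as the direct product $A=A_{p_1}\times\cdots\times A_{p_r}$ of its Sylow $p_i$-subgroups, with $|A_{p_i}|=p_i^{m_i}$. Conversely, every choice of abelian $p_i$-groups $B_i$ of order $p_i^{m_i}$ yields an abelian group $B_1\times\cdots\times B_r$ of order $n$, so the set of abelian groups of order $n$ is in bijection with tuples of abelian groups of the prime-power orders $p_i^{m_i}$.

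For the upper bound, I would let $P_i$ denote the smallest abelian group embedding every abelian group of order $p_i^{m_i}$, which by Corollary~\ref{c:abelian} exists (uniquely) and has order $p_i^{f(m_i)}$. Then $P=P_1\times\cdots\times P_r$ is an abelian group of order $\prod p_i^{f(m_i)}=A(n)$, and given any abelian group $B=B_{p_1}\times\cdots\times B_{p_r}$ of order $n$, each factor $B_{p_i}$ embeds in $P_i$, so $B$ embeds in $P$ via the product of these embeddings.

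For the matching lower bound, suppose $G$ is any abelian group embedding every abelian group of order $n$, and write $G=G_{p_1}\times\cdots\times G_{p_s}$ as the product of its own Sylow subgroups (where $\{p_1,\ldots,p_r\}\subseteq\{p_1,\ldots,p_s\}$, since $G$ must have $n\mid |G|$). For each $i\in\{1,\ldots,r\}$ and any abelian $p_i$-group $B$ of order $p_i^{m_i}$, the group $B\times\prod_{j\neq i}\Z_{p_j^{m_j}}$ is an abelian group of order $n$, hence embeds in $G$; the image of $B$ is a $p_i$-subgroup of $G$, so it lies in $G_{p_i}$. Thus $G_{p_i}$ covers all abelian groups of order $p_i^{m_i}$, and Corollary~\ref{c:abelian} gives $|G_{p_i}|\geq p_i^{f(m_i)}$. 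Multiplying over $i$ yields $|G|\geq A(n)$, with equality precisely when there are no extra Sylow factors and each $G_{p_i}$ is the canonical minimum abelian cover from Corollary~\ref{c:abelian}.

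There is no real obstacle: the argument is a packaging of the $p$-group case (Corollary~\ref{c:abelian}) together with the Sylow decomposition, essentially running the proof of Theorem~\ref{t:can} in the abelian-to-abelian direction where it is unconditional (no appeal to Question~\ref{q:apgp} is needed, because we are only seeking the smallest \emph{abelian} cover, not a minimum cover overall). The only cosmetic care needed is to restrict the statement and proof to the definition $f(m)=\sum_{k=1}^{m}\lfloor m/k\rfloor$ used earlier, and to observe that $A(n)$ depends only on the multiset of prime-power factors of $n$, as it must.
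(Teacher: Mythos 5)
Your proof is correct and is essentially the argument the paper intends: the paper presents this theorem as an immediate consequence of Corollary~\ref{c:abelian} ("combining the result for the prime factors"), obtained exactly as you do by decomposing abelian groups into their Sylow subgroups, taking the direct product of the minimal abelian $p_i^{m_i}$-covers for the upper bound, and observing that each Sylow subgroup of any abelian cover must itself cover all abelian groups of order $p_i^{m_i}$ for the lower bound. Your closing remarks correctly flag the typographical slips in the statement ($f(m)=\sum_{k=1}^{m}\lfloor m/k\rfloor$, and $a_r$ should read $m_r$).
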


Note that $A(n)=n$ if $n$ is squarefree, while $A(n)$ is roughly
$n^{\log\log n}$ if $n$ is a power of $2$. The values of $A(n)$ are not those
given by sequence A102361 in the OEIS~\cite{oeis}, although the first fifteen
terms agree (and these sequences agree at all fourth-power-free integers).

\medskip

We could ask whether similar results exist for soluble groups. But there is an
easy example to show that a set of soluble groups may have no soluble minimum
cover:

\begin{example} \label{ex:a5}
Let $A$ be the alternating group $A_4$ and $B$ the dihedral group $D_{10}$ of
order $10$.  Then $\lcm(|A|,|B|)=60$, and both groups are embeddable in $A_5$,
so $A_5$ is a minimum cover. There is no other cover of order $60$. For such a
group $G$ would act on the five cosets of $A$; it is easily seen that either
the action is faithful (whence $G\cong A_5$) or $A$ lies in the kernel (in
which case $G\cong A_4\times\Z_5$); but in the second case $G$ does not embed
the dihedral group.
\end{example}

\section{Further directions} \label{s:further}

We conclude by highlighting some further directions to be pursued, in addition to the various questions already posed in the paper.

First, the main definitions of this paper can be naturally dualised. Let $\F$ be a set of finite groups. A group $G$ is a \emph{dual $\F$-cover} if every group in $\F$ is isomorphic to a quotient of $G$. We say that a dual $\F$-cover $G$ is
\begin{itemize}
\setlength\itemsep{0pt}
\item \emph{minimal} if no proper quotient of $G$ is a dual $\F$-cover;
\item \emph{co-minimal} if no proper subgroup of $G$ is a dual $\F$-cover;
\item \emph{strongly minimal} if it is both minimal and co-minimal;
\item \emph{minimum} if no dual $\F$-cover has smaller order.
\end{itemize}
Note that a minimum dual cover is strongly minimal.

We have not investigated this concept except to note that, in the class of abelian groups, subgroups and quotients coincide (because of duality for abelian groups), so, for example, an abelian group is a minimum dual cover of a class of abelian groups if and only if it is a minimum cover.

\begin{question}
Investigate dual covers along the lines we have followed for covers.
\end{question}

Next, the work in Section~\ref{s:can} leads to the following question.

\begin{question}
For which classes $\X$ of groups, closed under the taking of subgroups and direct product, is it true that, if $\F$ is a finite set of $\X$-groups, then there is a minimum $\F$-cover which is an $\X$-group? 
\end{question}

As we have seen, this is true for cyclic groups and for nilpotent groups, but it is false for soluble groups (see Example~\ref{ex:a5}). We have been unable to resolve this question for abelian groups.

Finally, we give the answer to a question asked in an earlier version of the paper. The question asked: For which groups $G$ is it the case that $G$ is not a minimal cover of the set of its proper subgroups (equivalently, the set of its maximal subgroups)?

\begin{theorem}
If $G$ is not a minimal cover of the set of its proper subgroups, then $G$ is a $p$-group for some prime $p$, and all its maximal subgroups are isomorphic.
\end{theorem}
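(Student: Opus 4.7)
The plan is to unpack what it means for $G$ not to be a minimal cover of its proper subgroups, and then to play off Sylow subgroups against the hypothetical smaller cover. Suppose $G$ is not a minimal cover of the set $\F$ of its proper subgroups. By definition this yields a proper subgroup $H$ of $G$ such that every proper subgroup of $G$ embeds in $H$. Replacing $H$ by any maximal subgroup of $G$ containing it (which is still an $\F$-cover), we may assume $H = M$ is itself maximal. In particular, every proper subgroup of $G$ embeds in $M$.

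First I would show that $G$ must be a $p$-group. Suppose for contradiction that $|G|$ has at least two distinct prime divisors. Then for every prime $p$ dividing $|G|$, a Sylow $p$-subgroup $P$ has order $|G|_p < |G|$ and so is a proper subgroup of $G$; by hypothesis it embeds in $M$, so $|G|_p$ divides $|M|$. Taking the product over all primes $p \mid |G|$ gives $|G| = \prod_p |G|_p \mid |M|$, contradicting that $M$ is proper in $G$. Hence $G$ is a $p$-group for some prime $p$.

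Since $G$ is a $p$-group, every maximal subgroup of $G$ has index $p$ and hence the same order $|G|/p = |M|$. Given any maximal subgroup $M'$ of $G$, the hypothesis provides an embedding $M' \hookrightarrow M$; as $|M'| = |M|$, this embedding is an isomorphism. Therefore all maximal subgroups of $G$ are isomorphic to $M$, and in particular to each other, which gives the conclusion.

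I do not anticipate a serious obstacle: the whole argument is driven by the single observation that a proper subgroup of $G$ cannot absorb a full Sylow subgroup for each prime dividing $|G|$. The mildest subtlety is the initial reduction from an arbitrary proper subgroup $H$ (which exists by the non-minimality hypothesis) to a maximal one, but this is automatic because any overgroup of an $\F$-cover is again an $\F$-cover.
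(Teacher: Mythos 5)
Your proof is correct and follows essentially the same route as the paper's: Sylow subgroups force $|G|$ to divide the order of any proper subgroup that covers all proper subgroups (so $G$ must be a $p$-group), and then the fact that all maximal subgroups of a $p$-group share the order $|G|/p$ turns the required embeddings into isomorphisms. The reduction to a maximal $M$ is a harmless tidying step; no gaps.
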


\begin{proof}
If $G$ is not of prime power order, then such a cover must contain all the Sylow subgroups of $G$, and so must be at least as large as $G$. If $G$ is a $p$-group, then all maximal subgroups have index $p$; so, if two are nonisomorphic, then again a cover must be at least as large as $G$.
\end{proof}

\begin{remark}
The $2$-groups with the property of the theorem were determined by \'Cepuli\'c~\cite{cep}. 
\end{remark}


\begin{thebibliography}{99}

\bibitem{bnv}
Simon R. Blackburn, Peter M. Neumann and Geetha Venkataraman,
\textit{Enumeration of Finite Groups},
Cambridge University Press, Cambridge, 2007.

\bibitem{Magma}
W. Bosma, J. Cannon and C. Playoust, 
The Magma algebra system. I. The user language, 
\textit{J. Symbolic Comput.} \textbf{24} (1997), 235--265. 

\bibitem{Cameron99}
Peter J. Cameron,
\textit{Permutation Groups},
London Mathematical Society Student Texts, vol. 45,
Cambridge University Press, 1999.

\bibitem{cep}
V. \'Cepuli\'c,
On finite 2-groups all of whose subgroups are mutually isomorphic,
\textit{Science in China Series A: Mathematics} \textbf{52} (2009), 254--260.

\bibitem{ATLAS}
J.~H. Conway, R.~T. Curtis, S.~P. Norton, R.~A. Parker and R.~A.~Wilson,
\emph{$\mathbb{ATLAS}$ of Finite Groups}, Oxford University Press, Oxford,
1985.

\bibitem{dlp}
Heiko Dietrich, Melissa Lee and Tomasz Popiel,
The maximal subgroups of the Monster,
arXiv \texttt{2304.14646}.

\bibitem{pgld}
P.~G.~L. Dirichlet,
\textit{Werke} (ed. L.~Kronecker), Vol. ii (1849), pp. 49--66.
Reprinted Wentworth Press, 2018.

\bibitem{dh}
Klaus Doerk and Trevor Hawkes,
\textit{Finite Soluble Groups},
Walter de Gruyter, Berlin, 1992.

\bibitem{fss}
L. F. Fletcher, B. Stellmacher, and W. B. Stewart,
Endliche Gruppen, die kein Element der Ordnung 6 enthalten,
\textit{Quart. J. Math. Oxford} (2) \textbf{28} (1977), 143--154.

\bibitem{gap}
The GAP Group,
GAP -- Groups, Algorithms, and Programming, Version 4.12.2; 2022.
(\url{https://www.gap-system.org})

\bibitem{gordon}
L. M. Gordon,
Finite simple groups with no elements of order six,
\textit{Bull. Austral. Math. Soc.} \textbf{17} (1977), 235--246.

\bibitem{gpps}
Robert Guralnick, Tim Penttila, Cheryl Praeger and Jan Saxl, 
Linear groups with orders having certain large prime divisors, 
\textit{Proc. London Math. Soc.} (3) \textbf{78} (1999), 167--214.

\bibitem{hall}
Marshall Hall Jr.,
\textit{The Theory of Groups},
MacMillan, New York, 1959.

\bibitem{higman}
Graham Higman,
Enumerating $p$‐groups. I: Inequalities,
\textit{Proc. London Math. Soc.} (3) \textbf{10} (1960), 24--30.

\bibitem{hw1}
Petra E. Holmes and Robert A. Wilson,
$\PSL_2(59)$ is a subgroup of the Monster,
\textit{J. London Math. Soc.} (2)  \textbf{69} (2004), 141--152. 

\bibitem{hw2}
Petra E. Holmes and Robert A. Wilson,
A new maximal subgroup of the Monster,
\textit{J. Algebra} \textbf{251} (2008), 435--447. 

\bibitem{KleidmanLiebeck90}
Peter Kleidman and Martin Liebeck,
\textit{The Subgroup Structure of the Finite Classical Groups},
London Mathematical Society Lecture Note Series, vol. 129,
Cambridge University Press, 1990.

\bibitem{ls1991}
Martin W. Liebeck and Jan Saxl,
On point stabilizers in primitive permutation groups, 
\textit{Comm. Algebra} \textbf{19} (1991), 2777--2786.

\bibitem{LiebeckSeitz03}
Martin W. Liebeck and Gary M. Seitz, 
A survey of maximal subgroups of exceptional groups of Lie type, 
in \emph{Groups, Combinatorics \& Geometry (Durham, 2001)}, 
World Sci. Publ., River Edge, NJ, 2003.

\bibitem{MalleTesterman11}
G.~Malle and D.~Testerman,
\emph{Linear Algebraic Groups and Finite Groups of Lie Type},
Graduate Studies in Advanced Mathematics, vol. 133,
Cambridge University Press, 2011.

\bibitem{McCarthy70}
Donald McCarthy,
Sylow's theorem is a sharp partial converse to Lagrange's theorem,
\textit{Math. Z.} \textbf{113} (1970), 383--384.

\bibitem{msv}
Donatella Merlini, Renzo Sprugnoli and M. Cecilia Verri,
The Cauchy numbers,
\textit{Discrete Math.} \textbf{306} (2006), 1906--1920.

\bibitem{pod}
N.~D. Podufalov,
Finite simple groups without elements of sixth order,
\textit{Algebra and Logic} \textbf{16} (1977), 133--135.


\bibitem{oeis}
The On-Line Encyclopedia of Integer Sequences,
\url{https://oeis.org}

\bibitem{sims}
Charles C. Sims,
Enumerating $p$‐groups,
\textit{Proc. London Math. Soc.} (3) \textbf{15} (1965), 151--166.

\bibitem{suzuki}
M. Suzuki,
\textit{Group Theory I}, Grundlehren der math. Wissenschaften \textbf{247},
Springer-Verlag, Berlin, 2014.

\end{thebibliography}
\end{document}